\documentclass[11pt]{article}

\usepackage{amsmath, amsfonts,amssymb, amsthm}
\usepackage{mathtools}
\usepackage{multirow}
\usepackage{graphicx}
\usepackage{color}
\usepackage[table]{xcolor}
\usepackage{epsf}
\usepackage{tikz}
\usepackage{subfig}
\usepackage{hyperref}
\hypersetup{
    colorlinks = true,
    linkcolor = {black},
    citecolor = {black}
}
\usepackage{pgfplots}
\pgfplotsset{compat=1.18}

\usepackage[left=1.1in, right=1.1in, top=1.1in, bottom=1.1in]{geometry}

\renewenvironment{thebibliography}[1]{
  \begin{oldthebibliography}{#1}
    \setlength{\itemsep}{.4em}
    \setlength{\parskip}{0em}
}
{
  \end{oldthebibliography}
}

\usepackage{enumitem}
\setlist{itemsep=0pt, topsep=0pt}

\newcommand{\floor}[1]{\lfloor#1\rfloor}
\newcommand{\ceiling}[1]{\lceil#1\rceil}

\newcommand{\ex}{\mathrm{ex}}

\newcommand{\RR}{\mathbb{R}}

\newcommand{\tbf}[1]{\textbf{#1}}

\newtheorem{theorem}{Theorem}[section]

\newtheorem{lemma}[theorem]{Lemma}

\newtheorem{claim}[theorem]{Claim}

\newtheorem{observation}[theorem]{Observation}

\newtheorem{example}[theorem]{Example}
\newtheorem{problem}[theorem]{Problem}

\newtheorem{conjecture}[theorem]{Conjecture}
\newtheorem{remark}[theorem]{Remark}

\newenvironment{proofclaim}[1][Proof of claim]{\begin{proof}[#1]}{\end{proof}}

\newcommand{\ep}{\epsilon}

\usepackage{color}

\title{On the Ramsey numbers of wheels, cycles, and stars}
\author{Louis DeBiasio\thanks{Department of Mathematics, Miami University, Oxford, OH. \texttt{debiasld@miamioh.edu}. Research supported in part by NSF grant DMS-1954170 and AMS-Simons Research Enhancement Grants for PUI Faculty GR001015.}, Tucker Wimbish\thanks{Department of Mathematics, Miami University, Oxford, OH. \texttt{wimbistj@miamioh.edu}.}}
\date{\today}

\begin{document}

\maketitle

\begin{abstract}
The wheel $W_{k}$ is the graph on $k+1$ vertices consisting of a vertex joined to a cycle of length $k$, and we say that $W_k$ is an even wheel if $k$ is even.  Mao, Wang, Magnant, Schiermeyer \cite{MWMS} proved that the Ramsey number of $W_{2n}$ is between $4n+1$ and $12n-2$.  We improve both of these bounds, showing that $5n-\frac{1+(-1)^{n}}{2}\leq R(W_{2n})\leq 8n+664$ for all integers $n\geq 2$.

The main focus of the paper concerns two general results on the Ramsey numbers of stars versus even wheels and even cycles versus even wheels, from which the above bounds are obtained as a corollary. That is, we asymptotically determine $R(K_{1,m}, W_{2n})$ and $R(C_{2m}, W_{2n})$ for all sufficiently large $m$ and $n$, both of which were open problems for most regimes. 
%In particular, as it directly relates to the Ramsey number of $R(W_{2n})$, we prove that for all $n\geq 1000$, $R(K_{1,2n}, W_{2n})=5n-\frac{1+(-1)^{n}}{2}$ and for all $n\geq 2$, $R(C_{2n}, W_{2n})\leq 4n+332$.  

As for odd wheels, we note that the analogous values for stars versus odd wheels and odd cycles versus odd wheels were already known exactly, from which it follows that 
$6n+4=R(K_{1,2n+1}, W_{2n+1})\leq R(W_{2n+1})\leq 2\cdot R(C_{2n+1}, W_{2n+1})=12n+2$. Very recently, Zhang and Chen \cite{ZC} improved the upper bound to $R(W_{2n+1})\leq \frac{32n}{3}+O(1)$.  We are able to refine their proof, further improving the upper bound to $R(W_{2n+1})\leq 10n+O(1)$.
\end{abstract}

\section{Introduction}

\subsection{Ramsey}

The Ramsey number of a graph $H$, denoted $R(H)$ is the smallest $N$ such that in every $2$-coloring of the edges of $K_N$ there is a monochromatic copy of $H$. Given graphs $H_1$ and $H_2$ we write $R(H_1, H_2)$ to be the smallest $N$ such that in every 2-coloring of the edges of $K_N$ there exists a copy of $H_1$ in color 1 or a copy of $H_2$ in color 2. Note that if $H_1\subseteq G_1$ and $H_2\subseteq G_2$, then $R(H_1, H_2)\leq R(G_1, G_2)$.  If $H_1=H=H_2$, we typically write $R(H)$ instead of $R(H,H)$.  The $k$-wheel, $W_{k}$, is the graph on $k+1$ vertices obtained by joining a vertex to a cycle with $k$ vertices.  For completeness, we can define $W_1$ to be $K_2$ and $W_2$ to be $K_3$.  We warn the reader that some authors write $W_k$ to be the graph on $k$ vertices obtained by joining a vertex to a cycle of length $k-1$.  The $k$-fan, denoted $F_k$, is the graph on $2k+1$ vertices obtained by joining a vertex to a matching of size $k$.

The value of $R(F_n)$ is well-studied, and in a recent paper \cite{DW} we improved the best known upper and lower bounds on $R(F_n)$ by showing $(3+\sqrt{3})n-8<R(F_n)\leq (5+o(1))n$.  Note that $F_n$ is a spanning subgraph of $W_{2n}$ so we have $R(F_n)\leq R(W_{2n})$.  This relationship prompted us to investigate the value of $R(W_{2n})$.

It is known that $R(W_4)=15$ (see  \cite{HarM} and \cite{Hen}) and $R(W_6)=19$ (see \cite{LP} and \cite{Vo}), but the value of $R(W_{2n})$ is unknown for all $n\geq 4$.  The best known bounds are due to Mao, Wang, Magnant, and Schiermeyer \cite{MWMS} who proved that for all $n\geq 2$, $4n+1\leq R(W_{2n})\leq 12n-2$.  

We improve both of these bounds as follows.

\begin{theorem}\label{thm:main}
For all integers $n\geq 2$, $$5n-\frac{1+(-1)^{n}}{2}\leq R(W_{2n})\leq 8n+664.$$
\end{theorem}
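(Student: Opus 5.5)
The plan is to derive both bounds from the two general results advertised in the abstract: the value of $R(K_{1,m},W_{2n})$ and an upper bound on $R(C_{2m},W_{2n})$.

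\textbf{Lower bound.} The key observation is monotonicity. Since $K_{1,2n}\subseteq W_{2n}$ (the hub together with its $2n$ spokes), we have $R(K_{1,2n},W_{2n})\le R(W_{2n})$. So it suffices to show $R(K_{1,2n},W_{2n})\ge 5n-\frac{1+(-1)^n}{2}$, which needs a coloring of $K_N$ with $N=5n-1-\frac{1+(-1)^n}{2}$ with no red $K_{1,2n}$ and no blue $W_{2n}$.

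The construction I would try is as follows.
\begin{itemize}
\item Split the vertex set into five nearly equal parts $V_1,\dots,V_5$, each of size about $n$, arranged cyclically.
\item Color red the edges inside each part and the edges between consecutive parts, but make the red graph a $(2n-1)$-regular (or nearly regular) graph, so no vertex has red degree $2n$. This is where the parity correction $\frac{1+(-1)^n}{2}$ should come from: a $(2n-1)$-regular graph needs an even number of vertices.
\item Color blue the edges between non-consecutive parts. The blue graph is then a blow-up of $C_5$, and it is triangle-free at the level of parts.
\end{itemize}
The constraint to check is that the blue neighborhood of any vertex contains no blue $C_{2n}$. A vertex in $V_i$ has blue neighborhood $V_{i+2}\cup V_{i+3}$, whose blue edges form a bipartite graph. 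Its even cycles have length at most $2\min(|V_{i+2}|,|V_{i+3}|)<2n$ if the parts have size below $n$. Balancing the part sizes against the red regularity requirement should give exactly $5n-1-\frac{1+(-1)^n}{2}$ vertices. I expect the careful parity bookkeeping to be the fiddly part, not a genuine obstacle.

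\textbf{Upper bound.} I would bound $R(W_{2n})$ by analyzing a coloring of $K_N$ with $N=8n+664$ and no blue $W_{2n}$.

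1. Take a vertex $v$ of maximum red degree. By pigeonhole, some color class at $v$ has size at least $(N-1)/2\ge 4n+332$.
2. If $v$ has at least $R(C_{2n},W_{2n})$ neighbors in some color, restrict to that neighborhood. The assumed bound $R(C_{2n},W_{2n})\le 4n+332$ (the even-cycle versus even-wheel result in the $m=n$ case) then yields one of two outcomes.
   \begin{itemize}
   \item A cycle $C_{2n}$ in the same color as the edges to $v$. Together with $v$ this forms a monochromatic $W_{2n}$.
   \item A $W_{2n}$ in the other color, which is already monochromatic.
   \end{itemize}
3. The point needing care is that the cycle-versus-wheel result is asymmetric. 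It produces either a color-1 $C_{2n}$ or a color-2 $W_{2n}$, whereas here we need the cycle to be in the color of the edges to $v$. This is fine: apply the theorem with colors relabeled so that "color 1" is the color of the edges from $v$ to the neighborhood.

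So the argument is essentially the pigeonhole $2\cdot R(C_{2n},W_{2n})$ bound, mirroring the odd-wheel bound in the abstract. Indeed $N-1\ge 2(4n+332)-1$ gives some color with at least $4n+332$ neighbors, and $N=8n+664$ is exactly $2(4n+332)$. The main obstacle is therefore entirely the general theorem $R(C_{2n},W_{2n})\le 4n+332$ for all $n\ge2$, which I would invoke as proved later in the paper. For small $n$, where the additive constant dominates, the bound follows trivially from the earlier bound $12n-2\le 8n+664$ whenever $n\le 166$.
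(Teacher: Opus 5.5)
Your upper bound is the paper's argument: pigeonhole at any vertex gives $R(W_{2n})\le 2\cdot R(C_{2n},W_{2n})$ with the colors relabeled, and Theorem \ref{thm:CycleWheel} gives $R(C_{2n},W_{2n})\le 4n+332$ for all $n\ge 2$. Choosing $v$ of maximum red degree is unnecessary, and so is the appeal to the old $12n-2$ bound. Your reduction of the lower bound to $R(K_{1,2n},W_{2n})$ via $K_{1,2n}\subseteq W_{2n}$ is also the paper's. The gap is the five-part coloring, which does not work.

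If red really consists of all edges inside parts and between consecutive parts, a vertex of $V_i$ has red degree $|V_{i-1}|+|V_i|+|V_{i+1}|-1$. Keeping this below $2n$ forces every three consecutive parts to total at most $2n$, and summing over $i$ gives $3N\le 10n$, so $N\le 10n/3$. With parts of size about $n$, the red degree is about $3n$. You must then recolor roughly $n$ red edges at every vertex to blue, and at $v\in V_i$ these lie inside $V_{i-1}\cup V_i\cup V_{i+1}$. After that the blue graph is no longer a blow-up of $C_5$. The blue neighborhood of $v\in V_i$ is not $V_{i+2}\cup V_{i+3}$: it also contains roughly $n$ recolored neighbors, and those in $V_i$ are blue-complete to $V_{i+2}\cup V_{i+3}$. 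Your cycle-length bound says nothing about this set. Two further problems:
\begin{itemize}
\item $V_{i+2}$ and $V_{i+3}$ are consecutive, so in your scheme the edges between them are red, not a blue bipartite graph.
\item Parts of size below $n$ give at most $5n-5$ vertices, fewer than the $5n-1-\frac{1+(-1)^n}{2}$ you need.
\end{itemize}

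The missing idea is the Li--Schiermeyer two-part coloring that the paper uses.
\begin{itemize}
\item Take a red clique $Y$ on $2n$ vertices and a set $X$ of $3n-1$ vertices if $n$ is odd, or $3n-2$ if $n$ is even.
\item Color all $X$--$Y$ edges blue.
\item Inside $X$, take a blue $(n-1)$-regular graph all of whose components have at most $2n-1$ vertices (Lemma \ref{lem:reg}), and color the remaining edges of $X$ red. The lemma needs $|X|$ or $n-1$ to be even, which is where the parity term comes from.
\end{itemize}
Red degrees are then $2n-1$ on $Y$ and $|X|-n\le 2n-1$ on $X$. For $v\in Y$, the blue neighborhood is $X$, whose blue components are too small to contain a $C_{2n}$. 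For $v\in X$, the blue neighborhood is $Y\cup(N_B(v)\cap X)$, where $Y$ is blue-independent and $|N_B(v)\cap X|=n-1$. No two vertices of $Y$ are consecutive on a blue cycle there, so every such cycle has length at most $2n-2$.
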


Our proof of Theorem \ref{thm:main} begins with the simple observation that
\begin{equation}\label{eq:main}
R(K_{1,2n}, W_{2n})\leq R(W_{2n}, W_{2n})\leq 2\cdot R(C_{2n}, W_{2n}).  
\end{equation}
Indeed, the lower bound holds because $K_{1,2n}\subseteq W_{2n}$, and the upper bound holds because if we have a 2-coloring of $K_N$ with $N=2\cdot R(C_{2n}, W_{2n})$, then every vertex $v$ has at least $R(C_{2n}, W_{2n})$ neighbors of some color, say red, and a copy of $C_{2n}$ inside the red neighborhood of $v$ gives a red copy of $W_{2n}$.

Given \eqref{eq:main}, we then obtain Theorem \ref{thm:main} by determining the values of $R(K_{1,2n}, W_{2n})$ (exactly) and $R(C_{2n}, W_{2n})$ (within a constant)\footnote{This latter result was independently obtained by Zhang and Chen \cite{ZC}. See the note after Theorem \ref{thm:CycleWheel} for a more detailed discussion.} respectively.  In the process of doing so, we realized that despite the large volume of literature regarding the general values $R(K_{1,m}, W_{2n})$ and $R(C_{2m}, W_{2n})$ (see \cite[Sections 4.3, 5.5, 5.8]{Rad}), there were very few regimes where the values were even bounded, let alone determined asymptotically.  

We were able to asymptotically (exactly in some cases) determine the values of $R(K_{1,m}, W_{2n})$ and $R(C_{2m}, W_{2n})$ for all $m$ and $n$ (sufficiently large in some cases).  While we obtain Theorem \ref{thm:main} as a corollary, what started out as the auxiliary problems became the main focus of the paper.  For the rest of the introduction we discuss these auxiliary problems in detail.  

\subsubsection{Stars versus wheels}

An example of Li and Schiermeyer \cite{LiSc} shows that $R(K_{1,2n}, W_{2n})\geq 5n-\frac{1+(-1)^n}{2}$, but the consequence $R(W_{2n})\geq R(K_{1,2n}, W_{2n})\geq 5n-\frac{1+(-1)^n}{2}$ seems not to have been noted before.  If the lower bound on $R(K_{1,2n}, W_{2n})$ could be improved, this would immediately improve the lower bound on $R(W_{2n})$.  The only results in the literature which give an upper bound on $R(K_{1,m}, W_{2n})$ are for the cases when $1\leq m\leq 2n-1$ (falling just short of the value we were interested in).  We show that for all integers $n\geq 1000$, $$R(K_{1,2n}, W_{2n})= 5n-\frac{1+(-1)^n}{2},$$ which implies that the lower bound on $R(W_{2n})$ cannot be improved by considering $R(K_{1,2n}, W_{2n})$ alone.  

Elaborating on what was mentioned above, it is known \cite{Has} (see \cite[Theorem 1]{LiSc}) and easy to prove (it essentially follows from Dirac's theorem, but taking into account parity conditions which allow for a regular graph or not)
    % \footnote{For the upper bound, if there is no red $K_{1,m}$, then some vertex $v$ will have blue degree at least $2n$.  Indeed, if $m$ is odd, this is clear. If $m$ is even, then $N=2n+m-1$ is odd and $m-1$ is odd, so some vertex must have red degree at most $m-2$ and consequently blue degree at least $2n$. Furthermore, every vertex in $N_B(v)$ will have at least $n$ blue neighbors in $N_B(v)$ giving a cycle of length $2n$.  For the lower bound, an $(m-1)$-regular graph (so the complement has maximum degree at most $2n-1$) on $N=2n+m-\frac{1+(-1)^m}{2}-1$ exists (since either $N$ is even or $m-1$ is even). \ld{Think about whether to include this or not.}} 
that for all $1\leq m\leq n$, $R(K_{1,m}, W_{2n})= m+2n-\frac{1+(-1)^{m}}{2}$.  For all $3\leq n< m\leq 2n-2$, Li and Schiermeyer \cite{LiSc} proved that
\begin{equation}\label{eq:LS}
R(K_{1,m}, W_{2n})= 
    \begin{cases} 
     2m+n-1, & m \text{ and } n \text{ are even} \\
     2m+n, & \text{otherwise}
   \end{cases},
\end{equation}
and \cite{HM} proves $5n-2\leq R(K_{1,2n-1}, W_{2n})\leq 5n-1$.  Note that the value of $R(K_{1,m}, W_4)$ behaves slightly differently (see \cite{LiSc}).  In the process of determining $R(K_{1,2n}, W_{2n})$, we were able to extend the proof of Li and Schiermeyer \cite{LiSc} to show the following.

\begin{theorem}\label{thm:starWheelExact}
For all integers $n$ and $m$ with $3\leq n< m\leq 3n-1000$, 
$$R(K_{1,m}, W_{2n})= 
    \begin{cases} 
     2m+n-1, & m \text{ and } n \text{ are even} \\
     2m+n, & \text{otherwise}
   \end{cases}.$$
In particular, for all $n\geq 1000$, $R(K_{1,2n}, W_{2n})=5n-\frac{1+(-1)^n}{2}$.
\end{theorem}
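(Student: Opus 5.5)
We split the argument into the lower-bound construction and the upper bound. For the lower bound, which already appears in \cite{LiSc} for $m\le 2n-2$ and extends verbatim, we take $N=2m+n-1$ (or $2m+n-2$ when $m,n$ are both even). We 2-color $K_N$ so that the blue graph is a disjoint union of a blue clique on $n-1$ (resp.\ $n-2$ or so) vertices and two blue cliques of size about $m$. Alternatively, we use a red graph that is $(m-1)$-regular, arranged so that every blue neighborhood misses $C_{2n}$. Concretely, we will take blue to be two cliques $K_{m}$ and $K_{m}$ (or $K_{m-1}$ with parity adjustments), joined completely in blue to a set $S$ of size $n-1$ which is red inside. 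We then check two things: red degrees are at most $m-1$, and any blue $W_{2n}$ would need a hub whose blue neighborhood contains a $C_{2n}$. In that neighborhood the set $S$ is independent in blue, and the structure forces a cycle alternating through $S$, which is too short. The parity condition enters exactly when a red $(m-1)$-regular graph on the relevant part must exist.

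For the upper bound, suppose $N=2m+n$ (resp.\ $2m+n-1$) and there is no red $K_{1,m}$. Then every vertex has blue degree at least $N-m\ge m+n$, and by parity some vertex has blue degree at least $m+n$ even in the even case. Fix a hub $v$ of maximum blue degree and let $A=N_B(v)$ with $|A|\ge m+n$. We must find a blue $C_{2n}$ in $G_B[A]$. Every $u\in A$ has red degree at most $m-1$, so its blue degree inside $A$ is at least $|A|-m\ge n$. Thus $G_B[A]$ has minimum degree $\delta\ge |A|-m$. Since $|A|\ge m+n$ and $2n\le |A|$, we apply results on cycles of prescribed even length in dense graphs. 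If $\delta\ge |A|/2$, Bondy's pancyclicity theorem gives $C_{2n}$ directly, unless $G_B[A]$ is $K_{|A|/2,|A|/2}$, where even cycles still exist. The hard regime is $m> n$, where $\delta=|A|-m$ may be well below $|A|/2$; indeed $m$ up to $3n$ gives $\delta\approx n\approx |A|/4$.

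The main obstacle is therefore the following: a graph on $s\ge m+n$ vertices with minimum degree at least $s-m\ge n$ and $s\le m+n+O(1)$ must contain $C_{2n}$ when $m\le 3n-1000$. We plan to use a stability-type argument. Long-cycle results (Dirac/Erdős–Gallai type, or Woodall's theorem) give either a cycle of length at least $2\delta\ge 2n$ in each 2-connected block, or a decomposition into pieces of size about $\delta+1$. Once a long cycle exists with $\delta\ge n$, we shorten it to exactly length $2n$ by chord arguments, using results on the cycle spectrum of graphs with large minimum degree, e.g.\ that every even length between $4$ and the circumference roughly occurs. If instead $G_B[A]$ is far from 2-connected, it splits into components or blocks of size at least $\delta+1\ge n+1$, and since $s\le m+n+1\le 4n$ there are at most three such pieces. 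Each piece then has minimum degree near half its size, and Bondy or Jackson's theorem on even cycles in bipartite-like dense pieces finds $C_{2n}$. The threshold $m\le 3n-1000$ is where at most three pieces of size $\ge n$ fit and the degree slack suffices. We also have freedom in choosing a different hub $v$ whose neighborhood is better structured, and we will use this to rule out the extremal near-bipartite configurations with a little extra counting.
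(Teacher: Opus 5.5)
You set up the upper bound as the paper does: a hub $v$ of maximum blue degree, parity giving $d_B(v)\ge m+n$, and $\delta(G_B[A])\ge |A|-m\ge n$. From there the argument has a genuine gap. The ``main obstacle'' you isolate is false, and so is your plan for a non-2-connected $G_B[A]$.

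\textbf{The local claim fails.} A $C_{2n}$ lies inside a single block. When $G_B[A]$ splits into pieces on $|A|\le m+n+O(1)<4n$ vertices, every piece can have fewer than $2n$ vertices, and then no theorem finds a $C_{2n}$ there. Concretely, let $n\ge 501$ be odd, $m=n+2$ and $N=2m+n$. Take a vertex $v$; a set $A$ spanning two blue copies of $K_{n+1}$, with all edges between the two copies red; and a red clique $Z$ on $m-1$ vertices. Color all $v$--$Z$ edges red and every remaining edge blue. Every red degree is exactly $m-1$ and the blue graph is $(m+n)$-regular, so $v$ is a legitimate maximum-degree hub. Yet $G_B[N_B(v)]\cong 2K_{n+1}$ contains no $C_{2n}$; the blue $W_{2n}$ is centred at a vertex of one of the $K_{n+1}$'s instead.

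\textbf{What the paper does instead.} The hub must move, and this is the heart of the paper's proof.
\begin{itemize}
\item Let $X$ be a smallest component, or a smallest leaf-block with cut vertex $w$, of $G_B[N_B(v)]$. Let $Y=N_B(v)\setminus X$ and pick $x\in X\setminus\{w\}$.
\item Neither $x$ nor its neighbours in $X\setminus\{w\}$ see $Y$, and $|Y|\ge (m+n-1)/2$.
\item Edge-maximality of the red graph forces every $u\in N_B(v)$ to have blue degree exactly $N-m$.
\item Together these give any two $u_1,u_2\in N_B(x)\cap(X\setminus\{w\})$ at least $2n-2$ common neighbours inside $N_B(x)$. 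So a cycle $u_1z_1u_2z_2\cdots u_nz_n$ can be built greedily whenever that set has at least $n$ vertices.
\item The boundary case, where the set has exactly $n-1$ vertices, forces a rigid structure that is finished by hand, in one subcase using a hub $y\in Y$.
\end{itemize}
Your closing remark about choosing another hub is aimed at ``near-bipartite configurations'' and supplies no mechanism, so it does not cover this case.

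\textbf{The 2-connected case.} ``Every even length up to the circumference roughly occurs'' is not enough. Dirac/Voss--Zuluaga only guarantee a cycle of length at least $2n$, and Gould--Haxell--Scott (Theorem~\ref{thm:GHS}) loses an additive constant. You need the exact weak pancyclicity of Brandt--Faudree--Goddard (Theorem~\ref{thm:BFG}), which requires $G_B[A]$ to be non-bipartite with $\delta(G_B[A])\ge |A|/4+250$. Given $\delta(G_B[A])\ge|A|-m$ and $|A|\ge m+n$, this is precisely where $m\le 3n-1000$ is used, not in counting pieces. The bipartite case must be handled separately, e.g.\ by Jackson's theorem (Theorem~\ref{thm:jackson}).

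\textbf{The lower bound.} The coloring you describe fails.
\begin{itemize}
\item If the edges between your two blue $K_m$'s are red, each clique vertex has red degree $m$.
\item If they are blue, there is a blue $K_{2m}\supseteq W_{2n}$.
\item Moreover, for $m\ge n+2$, a blue $K_m$ joined in blue to $S$ already contains a blue $W_{2n}$.
\end{itemize}
The Li--Schiermeyer construction is different. Take a red clique $Q$ on $m$ vertices and a set $P$ with $|P|=m+n-1$, or $m+n-2$ when $m,n$ are both even. Color all $Q$--$P$ edges blue, and inside $P$ put a blue $(n-1)$-regular graph whose components have at most $2n-1$ vertices (Lemma~\ref{lem:reg}; this is where parity enters). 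A hub in $Q$ sees only $G_B[P]$, which has no $C_{2n}$. A hub in $P$ sees the blue-independent set $Q$ plus only $n-1$ further vertices, whereas a $C_{2n}$ needs at least $n$ vertices outside $Q$.
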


Additionally, we asymptotically determine $R(K_{1,m}, W_{2n})$ for all $m> n$.

\begin{theorem}\label{thm:starWheel}
For all $\ep>0$, there exists $m_0$ such that for all integers $n$ and $m$ with $m\geq m_0$ and $m>n\geq 2$, $R(K_{1,m}, W_{2n})\leq (2+\ep)m+n$.
\end{theorem}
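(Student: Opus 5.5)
Fix $\ep>0$ and let $N=\lceil(2+\ep)m\rceil+n$. Take a red/blue coloring of $K_N$ with no red $K_{1,m}$; we must find a blue $W_{2n}$. Every vertex then has red degree at most $m-1$, so the blue graph $G$ has minimum degree $\delta(G)\ge N-m\ge (1+\ep)m+n$. It suffices to find a vertex $v$ whose blue neighbourhood $N_B(v)$ contains a blue $C_{2n}$. The theorem is only asymptotic, so I would not use exact extremal tools. Instead I would combine the regularity lemma with the standard ``connected matching $\Rightarrow$ cycle'' machinery, or more simply with Dirac-type and bipartite Hamiltonicity arguments.

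\textbf{Step 1: reduce to a dense host.} Fix a vertex $v$ and set $A=N_B(v)$, so $|A|\ge (1+\ep)m+n$. Each $u\in A$ has at most $m-1$ red neighbours, so in the blue graph $G[A]$ we have
\[
\delta(G[A])\ge |A|-m\ge \frac{\ep}{1+\ep}|A|+\frac{n}{1+\ep}\cdot\frac{|A|}{|A|}\quad\text{roughly}.
\]
More precisely, the red graph on $A$ has maximum degree less than $m$, while $|A|\ge(1+\ep)m+n$. We want a blue $C_{2n}$ in $A$ with $2n< 2m$.

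Case (a): $n\le \ep m/10$. The blue minimum degree in $A$ is at least $|A|-m\ge \ep m+n$, which is linear in $|A|$. By Erd\H{o}s--Gallai, or by a greedy/regularity argument, a graph on $|A|=O(m)$ vertices with minimum degree $\Omega(\ep m)$ contains even cycles of every length from $4$ up to about $2\delta$. Note that for large $m$ the lengths $2n$ with $2n\le 2\delta$ are all covered; this needs the known result that minimum degree $\delta$ gives all even cycle lengths up to $2\delta$, e.g.\ via Bondy--Simonovits/Voss--Zuluaga type results for non-bipartite or bipartite graphs. Since $2n\le 2(\ep m+n)$, we get the cycle.

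Case (b): $n>\ep m/10$. Here $n=\Theta(m)$ and we apply the regularity lemma to the blue graph $G[A]$. In the reduced graph $\Gamma$ on $k$ clusters, the minimum degree is at least $(|A|-m)/|A|-\ep'$ times $k$. We need a connected matching in $\Gamma$ covering at least a $2n/|A|+\ep'$ fraction of the clusters; by the standard blow-up (\L uczak's method), a connected matching gives blue even cycles of all even lengths up to nearly its vertex weight. Now $\Gamma$ has minimum degree $\delta_\Gamma\ge (1-\frac{m}{|A|}-\ep')k$. Each connected component has more than $\delta_\Gamma$ vertices, and within it a maximum matching has size at least $\min(\delta_\Gamma,\text{comp}/2)$. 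So the matching covers at least $2\delta_\Gamma\approx 2(|A|-m)k/|A|\ge 2(\ep m+n)k/|A|$ vertices, which exceeds the required $2n k/|A|$ by a margin of order $\ep$. The component analysis also needs the case where the component has size below $2\delta_\Gamma$; then the matching is nearly perfect on a component of size $>\delta_\Gamma$, so it still covers more than $\delta_\Gamma$ clusters. Since $\delta_\Gamma$ alone is only $(\ep m+n)k/|A|$, this is smaller than the needed $2nk/|A|$ when $n$ is large. This is the main obstacle.

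To handle it, I would use the freedom in choosing $v$, together with the fact that all red degrees are below $m$ globally. If the blue graph $G[A]$ is disconnected into pieces each of size less than $2n$, then every vertex in a piece $P$ is red to $A\setminus P$, which gives red degree at least $|A|-|P|$. Hence $|P|>|A|-m\ge \ep m+n$, so there are at most two pieces, each of size more than $|A|-m$. Then I would pick $v'$ inside one piece and use the extra blue edges to $v$, or apply the argument to the whole graph: the global structure becomes a near-``two blue cliques joined by red'' configuration. This forces red degrees of roughly $N/2\ge (1+\ep/2)m$, a contradiction. I expect this component/structure analysis in the reduced graph to be where most of the work lies.
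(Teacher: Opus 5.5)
\textbf{There is a genuine gap in Case (b), at exactly the point you call the main obstacle.} When every component of the reduced graph of $G[N_B(v)]$ carries weight below $2n$, the set $N_B(v)$ may contain no blue $C_{2n}$ at all, and your proposed way out does not work.
\begin{itemize}
\item The claim that there are at most two pieces is false. The bound $|P|>|A|-m$ only gives fewer than $1+\frac{m}{\ep m+n}$ pieces. For $n=2\ep m$, every piece has fewer than $4\ep m$ vertices while $|A|\ge(1+3\ep)m$. So there are at least about $\frac{1}{4\ep}$ pieces, and this is compatible with all red degrees being below $m$.
\item There is no red-degree contradiction to be found. Take $n=m/2$ and $\ep<\frac12$. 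Let $N_B(v)$ consist of two blue cliques of size about $(\frac34+\frac{\ep}{2})m<2n$, with all edges between them red. Let the remaining $m$ vertices (including $v$) form a red clique joined in blue to everything else.
\end{itemize}
This example is exactly a ``two blue cliques joined by red'' configuration, every red degree is below $m$, and $N_B(v)$ contains no blue $C_{2n}$. The blue $W_{2n}$ exists only with its centre inside one of the two cliques. So the argument cannot be closed by a contradiction derived from the structure around $v$; it must re-centre the wheel, and your proposal gives no argument for this.

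\textbf{The missing idea, which is how the paper argues, is to move the centre into a piece.} The paper makes the pieces exact. By Lemma~\ref{lem:2con}, one deletes fewer than $3/\ep$ vertices of $N_B(v)$ so that every remaining component is 2-connected. If one of these components has at least $2n+C$ vertices, Theorems~\ref{thm:VZ} and~\ref{thm:GHS} give $C_{2n}$ directly. Otherwise:
\begin{itemize}
\item Take a component $X$ and let $Y$ be the union of the other components, so $|Y|\ge m-n+\frac{\ep m}{2}$. Pick $x\in X$.
\item Neither $x$ nor any $u\in N_B(x)\cap X$ has a blue neighbour in $Y$, so $|N_B(u)\cap N_B(x)|\ge d_B(u)+d_B(x)-(N-|Y|)$.
\item From this, any two vertices of $N_B(x)\cap X$ have at least $2\ep m$ common blue neighbours inside $N_B(x)$. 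A vertex of $N_B(x)\cap X$ and any other vertex of $N_B(x)$ have at least $\frac32\ep m$.
\item Hence $G[N_B(x)]$ is 2-connected, with minimum degree at least $n+\ep m$ and order at least $(1+\ep)m+n\ge 2n+C$. This is where $m>n$ enters.
\item Theorem~\ref{thm:VZ} then gives an even cycle of length at least $2n+C$, and Theorem~\ref{thm:GHS} gives a blue $C_{2n}$ in $N_B(x)$.
\end{itemize}
In your regularity framework, you would have to run the same re-centring at a typical vertex of an approximate piece. You would also have to show that the reduced graph of $G[N_B(x)]$ is connected. The paper's exact decomposition avoids both.

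\textbf{Case (a) is repairable, but the fact you quote is false as stated.} Disjoint copies of $K_{\delta+1}$ have minimum degree $\delta$ yet no cycle longer than $\delta+1$. What suffices there is $\mathrm{ec}(G[A])\ge\delta(G[A])$, which follows from a longest-path argument. Combined with Theorem~\ref{thm:GHS}, this covers Case (a), since $2n\le \ep m/5$.
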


So by combining Theorem \ref{thm:starWheel} with the existing results, we have the following (asymptotically) complete picture for all positive integers $m$ and $n$:
\begin{equation}\label{eq:starwheel}
R(K_{1,m}, W_{2n})= 
    \begin{cases} 
     m+2n-\frac{1+(-1)^{m}}{2}, & m\leq n \\
     (2+o(1))m+n, & m>n
   \end{cases}.
\end{equation}

The main thing left to do is to obtain an exact value for $R(K_{1,m}, W_{2n})$ for all $m>3n-1000$ (then as a secondary goal, remove the restriction that $n$ is sufficiently large).  

\begin{conjecture}\label{con:starwheel}
\eqref{eq:LS} holds for all integers $m\geq 2n-1\geq 4$.
\end{conjecture}

\subsubsection{Cycles versus wheels}

Recall that by \eqref{eq:main} we have $R(W_{2n})\leq 2\cdot R(C_{2n}, W_{2n})$, but unfortunately there were no results in the literature (see note after Theorem \ref{thm:CycleWheel}) which gave an upper bound on $R(C_{2n}, W_{2n})$.
It was conjectured that $R(C_{2n}, W_{2n})= 4n-1$ in \cite{SBT} (see also \cite{Shi}, \cite{ZBC2}, \cite{RZ}).  Zhang, Broersma, and Chen \cite{ZBC2} come close to addressing this problem, but they prove $R(C_{2m}, W_{2n})=4m-1$ only in the case when $m\geq n+251$.  We employ a similar approach as in \cite{ZBC2} to prove $R(C_{2m}, W_{2n})\leq 4m+O(1)$ in the remaining cases when $n\leq m\leq 250$.  In order to have a single proof which also includes the known case when $m\geq n+251$, we prove our result in the following form.

\begin{theorem}\label{thm:CycleWheel}
Let $m$ and $n$ be integers with $m\geq n\geq 2$ and set $c:=m-n$. 
We have $$4m-1\leq R(C_{2m}, W_{2n})\leq \max\{4m+332-\floor{\frac{333}{251}c}, 4m-1\}.$$  In particular, $R(C_{2n}, W_{2n})\leq 4n+332$ and $R(C_{2m}, W_{2n})= 4m-1$ for all $m\geq n+251$.
\end{theorem}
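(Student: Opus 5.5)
The lower bound $4m-1\le R(C_{2m},W_{2n})$ comes from a construction. Take two disjoint red cliques $K_{2m-1}$ and color every edge between them blue. Then red is a disjoint union of two copies of $K_{2m-1}$, so there is no red $C_{2m}$. Blue is the complete bipartite graph $K_{2m-1,2m-1}$, which is triangle-free. Every wheel contains a triangle (the hub together with any rim edge), so there is no blue $W_{2n}$. This coloring has $4m-2$ vertices, giving the lower bound.

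For the upper bound, set $N:=\max\{4m+332-\floor{\frac{333}{251}c},\,4m-1\}$ and fix a red/blue coloring of $K_N$ with no red $C_{2m}$. We must find a blue $W_{2n}$. A blue $W_{2n}$ is a vertex $v$ together with a blue $C_{2n}$ inside its blue neighborhood $N_B(v)$. The plan follows the approach of \cite{ZBC2}, in three steps.

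\textbf{Step 1 (large blue degree).} Show that some vertex has large blue degree, or else red is dense enough to force a red $C_{2m}$. If every vertex had red degree at least about $N/2\ge 2m$, then Dirac/Bondy-type pancyclicity results would give red cycles of all lengths up to a Hamiltonian length, including $C_{2m}$. So some $v$ has blue degree at least roughly $N/2$, i.e. $|N_B(v)|\ge 2m+O(1)-\frac{333}{502}c$. The constant slack in $N$ is exactly what we budget here.

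\textbf{Step 2 (blue $C_{2n}$ inside $N_B(v)$).} Let $X=N_B(v)$. We need a blue $C_{2n}$ in $X$, while the red graph on $X$ has no $C_{2m}$. Since $m\ge n$ and $|X|$ is about $2m$, we invoke known results on $R(C_{2m},C_{2n})$ and on cycles in graphs without long cycles. By Erd\H{o}s–Gallai, a red graph on $X$ with no cycle of length at least $2m$ has fewer than $m|X|$ edges. With more care (a stability version), the red graph on $X$ is close to a disjoint union of cliques of size below $2m$, or is sparse.

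\begin{itemize}
\item If red on $X$ is sparse, the blue graph on $X$ has minimum degree above $|X|/2$ after removing a few vertices. Bondy's pancyclicity theorem then yields a blue $C_{2n}$, since $2n\le |X|$.
\item If red on $X$ has the two-clique structure, then the blue bipartite graph between the parts is nearly complete. Since each part has about $m\ge n$ vertices, a blue $C_{2n}$ alternating between the parts exists.
\end{itemize}

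The dependence on $c$ enters because when $m$ is much larger than $n$, $X$ has far more room than $2n$ vertices. We can therefore afford to discard up to a constant times $c$ bad vertices, which is the source of the $\frac{333}{251}c$ saving.

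\textbf{Step 3 (the main obstacle).} The hard part is the intermediate regime, where $X$ has red structure that is neither sparse nor a clean near-bipartition. Here I would try a regularity-free, path-extension argument. Take a longest blue path in $X$ and use the fact that any red $P_{2m}$ closing up would give a red $C_{2m}$ together with vertices outside $X$ (red-adjacent to endpoints) to extend. Combined with a Pósa-rotation argument, this should bound the number of exceptional vertices by a constant, yielding the additive $332$.

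I expect the explicit constants 332 and 251 to arise from optimizing this exceptional-vertex count against $c$. I would not aim to reproduce them exactly, but to show that some linear function $4m+C_1-C_2c$ suffices, and then tune.
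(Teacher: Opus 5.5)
\textbf{There is a genuine gap in the upper bound.} Your lower-bound construction is correct and is the same as the paper's. Steps 1--2 of the upper bound fail.

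Step 1 only guarantees a vertex $v$ with $|N_B(v)|\approx N/2\approx 2m+166-\frac23 c$. For all but boundedly many $n$ this is below $R(C_{2m},C_{2n})=2m+n-1$, so your appeal to ``known results on $R(C_{2m},C_{2n})$'' says nothing about $X=N_B(v)$. Concretely, $X$ can be a red $K_{2m-1}$ plus a red clique on the remaining $|X|-2m+1<n$ vertices, with all edges between the two blue. Then red has no $C_{2m}$, and blue is bipartite with a side smaller than $n$, so there is no blue $C_{2n}$. This is exactly the extremal picture: in the two-clique colouring on $4m-2$ vertices, every blue neighbourhood is a red $K_{2m-1}$ with no blue edges at all. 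Your dichotomy ``sparse, or two cliques of size about $m$'' omits it.

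Two further problems. Erd\H{o}s--Gallai bounds graphs of circumference below $2m$, but you only know there is no red $C_{2m}$; passing from one to the other needs a (weak) pancyclicity theorem you never invoke. Step 3 is a heuristic rather than an argument, and your account of where the $\frac{333}{251}c$ term comes from is speculative.

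\textbf{The missing idea.} Use $R(C_{2m},C_{2n})=2m+n-1=3m-1-c$ in the opposite direction. If any vertex had blue degree at least $3m-1-c$, you would finish inside its blue neighbourhood. Hence $\delta(G_R)\ge N-3m+1+c$, and this is at least $N/4+250$ exactly when $N\ge 4m+332-\frac43 c$. That is the origin of $332$ and $\frac{333}{251}$; the $250$ comes from the Brandt--Faudree--Goddard theorem.

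The argument is then global in the red graph:
\begin{enumerate}
\item If $G_R$ is bipartite, a blue clique on $\lceil N/2\rceil>2n$ vertices contains $W_{2n}$.
\item If $G_R$ is 2-connected and non-bipartite, Brandt--Faudree--Goddard (weak pancyclicity when $\delta\ge N/4+250$) together with Dirac (circumference at least $2\delta(G_R)>2m$) gives a red $C_{2m}$.
\item Otherwise, deleting at most one cut vertex leaves two or three components, each of order greater than $m$. Three components span a blue complete tripartite graph with parts larger than $n$, hence a blue $W_{2n}$.
\item With two components $X_1,X_2$, any vertex with $n$ blue neighbours in its own part yields a blue $W_{2n}$ using the cross edges. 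So $\delta(G_R[X_i])\ge |X_i|-n$, and Bondy's theorem gives a red $C_{2m}$ unless $|X_1|=|X_2|=2m-1$. That last case is settled by examining the cut vertex.
\end{enumerate}
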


We note that after uploading this paper on arXiv, we were informed that Zhang and Chen \cite{ZC} independently proved that $R(C_{2n}, W_{2n})\leq 4n+332$ using essentially the same proof and therefore they independently obtained the corollary $R(W_{2n})\leq 8n+664$.  

When $2\leq m<n$, the Ramsey number $R(C_{2m}, W_{2n})$ seems to have not been directly studied at all (although there are some related results regarding $R(C_{2m}, K_{1,2n})$ and $R(C_{2m}, F_{n})$ when $m<n$ which we will discuss in the next subsection).  We give an asymptotically tight bound on $R(C_{2m}, W_{2n})$ for all sufficiently large $m$ with $m< n$.  This behavior is quite different depending on whether $\frac{n}{2}\leq m< n$ or $2\leq m<\frac{n}{2}$.  When $\frac{n}{2}\leq m< n$, the bounds are fairly straightforward.

\begin{theorem}\label{thm:m>n/2}
There exists constants $C$ and $m_0$ such that for all integers $n,m\geq m_0$ with $\frac{n}{2}\leq m< n$,
$$
2n+2m-2\leq R(C_{2m}, F_n)\leq R(C_{2m}, W_{2n})\leq 2n+2m+C.
$$
\end{theorem}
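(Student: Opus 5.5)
We need to prove three inequalities. The middle one, $R(C_{2m},F_n)\le R(C_{2m},W_{2n})$, is immediate because $F_n$ is a spanning subgraph of $W_{2n}$.

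\textbf{Lower bound.} We want a 2-coloring of $K_{2n+2m-3}$ with no red $C_{2m}$ and no blue $F_n$. Take a red clique $A$ on $2m-1$ vertices and a red clique $B$ on $2n-2$ vertices, and color every edge between $A$ and $B$ blue.
\begin{itemize}
\item Red graph: its components have at most $\max\{2m-1,2n-2\}$ vertices. Since $2n-2\ge 2m$ is possible when $m<n$, this needs care: a red clique on $2n-2\geq 2m$ vertices contains a red $C_{2m}$. So we replace $B$ by a red graph on $2n-2$ vertices with no $C_{2m}$ whose complement has no $F_n$-relevant structure.
\item Simplest fix: color all edges inside $B$ blue as well, and let the red graph be exactly the clique on $A$. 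Then there is no red $C_{2m}$. The blue graph is $K_{2n-2}$ joined to an independent set of size $2m-1$, so every vertex has blue degree at most $2n+2m-4$.
\end{itemize}
A blue $F_n$ needs a center $v$ together with a blue matching of size $n$ in $N_B(v)$. If $v\in A$, then $N_B(v)=B$, and $B$ has only $2n-2$ vertices, so it cannot host a matching of size $n$. If $v\in B$, every blue edge in $N_B(v)$ uses a vertex of $B\setminus\{v\}$, which has $2n-3$ vertices, so any blue matching there has size at most $2n-3$; this does not rule out $F_n$. So the construction must be tuned. The standard choice is two blue-free pieces: a red $K_{2m-1}$ on $A$, and on $B$ (with $|B|=2n-2$) a red graph of large minimum degree but no $C_{2m}$, for instance a disjoint union of red $K_{2m-1}$'s. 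The blue degree inside $B$ is then small, so the blue neighborhood of $v\in B$ is essentially $A$ plus few vertices, giving matching number below $n$ when $m\ge n/2$. I would verify this with the exact sizes, adjusting parity so the count comes out to $2n+2m-3$. This is where the hypothesis $m\ge n/2$ enters.

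\textbf{Upper bound.} Let $N=2n+2m+C$ and suppose the coloring has no red $C_{2m}$. I would use the cycle-versus-wheel machinery of the paper in the style of Theorem~\ref{thm:CycleWheel}.

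First, if some vertex $v$ has red degree large and its red neighborhood contains a long red path structure, we get a contradiction. More precisely, we use known results on graphs without $C_{2m}$: by Erd\H{o}s--Gallai and the stability/circumference results (e.g.\ Woodall or Kopylov), a red graph on $N$ vertices with no $C_{2m}$ is close to a union of pieces of size below $2m$, or has few edges.

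Next, pick a vertex $v$ of maximum blue degree; it has blue degree at least about $2n+C/2$. We need a blue $C_{2n}$ in $N_B(v)$. Inside $N_B(v)$, a set of size at least $2n+C'$, the red graph has no $C_{2m}$. With $2m\ge n$, the red graph restricted there has circumference below $2m$, so it has bounded "component-like" structure. Two cases follow:
\begin{itemize}
\item If the blue graph on $N_B(v)$ has minimum degree at least half its order, Dirac's theorem (or a pancyclicity or even-cycle version) gives a blue $C_{2n}$.
\item Otherwise we find many vertices of high red degree. Using the absence of a red $C_{2m}$, these force a near-complete bipartite blue structure, and a long blue even cycle is then found by a bipartite Hamiltonicity argument (Moon--Moser), using $2m\ge n$ to guarantee enough room.
\end{itemize}

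The main obstacle is this second, structural case: converting "no red $C_{2m}$" into enough blue density inside a neighborhood of size only $2n+O(1)$. I would first apply a Kopylov-type bound (a connected $C_{\ge 2m}$-free graph on $k$ vertices has at most about $\max\{\binom{2m-1}{2}+\dots\}$ edges) to show that red components inside $N_B(v)$ have fewer than $2m$ vertices. Since $N_B(v)$ has more than $2n\ge$ those sizes, blue then contains a complete multipartite spanning subgraph with parts of size below $2m\le 2n$, and $C_{2n}$ is found there because no part exceeds half of the $2n$ chosen vertices. The constant $C$ absorbs the error terms.
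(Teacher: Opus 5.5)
Your lower bound is never pinned down, and the construction you end up suggesting fails. Since $2m-1\le 2n-3<|B|$, the set $B$ must be split into several red cliques. Suppose one piece is a red $K_{2m-1}$ on $L\subseteq B$. Then a vertex $v$ in another piece of $B$ has $A\cup L\subseteq N_B(v)$, and every $A$--$L$ edge is blue, so $N_B(v)$ contains a blue $K_{2m-1,2m-1}$. As soon as $m>n/2$ we have $2m-1\ge n$, which gives a blue $C_{2n}$ and hence a blue $W_{2n}\supseteq F_n$. What you need is that every vertex sees, inside its blue neighborhood, a blue bipartite graph with one side of size at most $n-1$. The paper achieves this with red cliques of sizes $2m-1$, $n-1$, $n-1$ and all edges between them blue. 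Here $m\ge n/2$ gives $n-1\le 2m-1$, so there is no red $C_{2m}$.

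The upper-bound sketch is missing its first step and relies on several false claims.
\begin{itemize}
\item You assert without justification that a vertex of maximum blue degree has blue degree about $2n+C/2$. But the red graph may be dense, and your plan never produces a red cycle of length exactly $2m$ in that case.
\item Even granting that vertex, $2n+C/2<R(C_{2m},C_{2n})=m+2n-1$, so real work is needed inside $N_B(v)$, and the tools you invoke do not do it.
\item Having no red $C_{2m}$ does not make the red circumference less than $2m$: a chordless red $C_{2m+2}$ is $C_{2m}$-free.
\item An edge bound of Kopylov or Erd\H{o}s--Gallai type cannot force red components to be small: a red path is connected and $C_{2m}$-free.
\item A complete multipartite blue graph on $2n+C'$ vertices with parts smaller than $2m$ need not contain $C_{2n}$, because a part can have size $2m-1>n$. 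For example, with $m=n-1$, parts of sizes $2m-1$ and $C'+3$ give only cycles of length at most $2C'+6$.
\end{itemize}

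Here is the route the paper takes.
\begin{itemize}
\item If some vertex has blue degree at least $m+2n-1$, Theorem~\ref{thm:evenC} finishes inside its blue neighborhood.
\item Otherwise $\delta(G_R)\ge m+C+1\ge N/6+6$. Lemma~\ref{lem:2con} then removes at most four vertices and leaves 2-connected red components of minimum degree at least $m+C'$.
\item A component on at least $2m+C'$ vertices has an even cycle of length at least $2m+C'$ by Theorem~\ref{thm:VZ}, and hence a red $C_{2m}$ of exact length by Theorem~\ref{thm:GHS}. This exact-length step is the heart of the proof and has no counterpart in your sketch.
\item Otherwise every component has size strictly between $m+C'$ and $2m+C'$, and there are at least three of them. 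Take the wheel center in the smallest component $V_1$. Its blue neighborhood is then the full complete multipartite blue graph on $V_2,\dots,V_t$.
\item If $t=3$, then $|V_2|,|V_3|\ge n$; if $t\ge 4$, then $|V_t|<|V_2|+\dots+|V_{t-1}|$. Either condition yields a blue $C_{2n}$.
\end{itemize}
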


When $2\leq m<\frac{n}{2}$, the bounds are surprisingly more complicated.  In this case, we asymptotically determine the value of $R(C_{2m}, W_{2n})$ for sufficiently large $m<\frac{n}{2}$.  

\begin{theorem}\label{thm:m<n/2}
For all $\gamma>0$, there exists $m_0$ such that for all integers $m, n$ with $m_0\leq m< \frac{n}{2}$ and for all integers $q\geq 3$, we have
\begin{align*}
R(C_{2m}, W_{2n}) \leq 
\begin{cases} 
     (2q+\gamma)m, & \frac{q+1}{q^2}n\leq m<\frac{1}{q-1}n \\
     (2+\frac{2}{q}+\gamma)n, & \frac{n}{q}\leq m<\frac{(q+1)n}{q^2}
   \end{cases}.
\end{align*}
Furthermore, this upper bound is asymptotically tight. 
\end{theorem}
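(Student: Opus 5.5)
We will prove the upper bound with the regularity method, which is where the condition $m\geq m_0$ is used. Let $N$ be the claimed bound and fix a red/blue colouring of $K_N$ with no red $C_{2m}$. The goal is a blue $W_{2n}$, i.e.\ a vertex $v$ whose blue neighbourhood contains a blue $C_{2n}$.

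\textbf{Step 1 (reduced graph).} Apply the 2-coloured regularity lemma with parameters $\epsilon\ll d\ll\gamma$. This gives a reduced graph $\Gamma$ on $k$ clusters, each edge coloured by its majority density, with red edges those of red density at least $d$. By the standard Łuczak-type connected-matching argument, a red connected matching in $\Gamma$ covering more than $(1+\gamma/4)\frac{2m}{N}k$ clusters yields a red $C_{2m}$. Since even cycles of every length up to the matching size can be embedded via blow-up, parity is not an obstacle. Hence every red component of $\Gamma$ has matching number less than about $\frac{m}{N}k$.

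\textbf{Step 2 (structure).} Apply the Gallai--Edmonds/Tutte--Berge decomposition to each red component. Such a component $D$ has a set $S_D$ of at most about $\frac{m}{N}k$ clusters such that $D-S_D$ splits into pieces that are small, or odd, or hard to match. Across distinct red components all edges are blue.

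Next, consider the blue side. For a cluster $X$ with many blue neighbours, we want a typical vertex $v\in X$ whose blue neighbourhood spans a blue graph with a connected matching of size greater than $n(1+\gamma)$ in the induced reduced graph, and which is "bipartite-robust". A blue connected matching covering $2n(1+\gamma)$ vertices of $N_B(v)$, with the cycle embedded inside $N_B(v)$, gives the blue $W_{2n}$.

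Blue density is forced because the red components are small: each red component of $\Gamma$ has fewer than roughly $2m$-worth of vertices, except that larger components are possible but must have Tutte-type vertex covers of size less than $m$. Counting shows that almost every vertex $v$ has blue degree at least roughly $N-2m$ or so. The blue graph inside $N_B(v)$ is then the complement of a disjoint union of red pieces, each of size less than $2m$ (in the $m<n/2$ regime, this is after removing the small cover sets).

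\textbf{Step 3 (optimisation).} Let the red pieces inside $N_B(v)$ have sizes $a_1,\dots,a_t$, each less than $2m$. The blue graph on $N_B(v)$ is then essentially complete multipartite with these parts. A complete multipartite graph on $M$ vertices with largest part $a$ has a connected matching covering $\min\{M,2(M-a)\}$ vertices. So the blue $C_{2n}$ exists as soon as $|N_B(v)|\geq 2n$ and the part structure does not trap it. The bad configurations are colourings where each vertex sees only about $q-1$ red pieces of size about $2m$. This is the extremal family $q$ red cliques of size about $2m$ (resp.\ a balanced construction), and it yields exactly the two regimes $(2q+\gamma)m$ and $(2+2/q+\gamma)n$. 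Concretely, $v$ lies in one piece, so its blue neighbourhood is the union of the other pieces. We need the total size of the other pieces to be at least $2n$ and each other piece to be no larger than half of it. We optimise over $t$ and the sizes subject to $\sum a_i=N$ and $a_i<2m$, and the threshold $m$ versus $\frac{q+1}{q^2}n$ decides which constraint binds.

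\textbf{Main obstacle.} The hard part is Step 2: justifying that a red component with no large connected matching behaves like a union of cliques of size less than $2m$. Large red components with small Tutte sets $S$ (size less than $m$) do not look like this. We would handle them by noting that $S$-vertices have huge red degree, and then choosing $v$ outside $S$ with its blue neighbourhood meeting many components of $D-S$, which are mutually blue. The remaining pieces then play the role of the small cliques in the optimisation, with the $S$-clusters simply discarded at a cost absorbed into $\gamma$.

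For tightness, we would exhibit the colourings: disjoint red cliques of sizes just under $2m$ (with $q$ of them), or $q+1$ red cliques balanced so that each vertex's blue neighbourhood is a multipartite graph with one part exceeding half. These match the two bounds in their respective ranges.
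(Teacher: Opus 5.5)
\textbf{There is a genuine gap: the Tutte sets cannot be discarded.} Your Step 1 and the multipartite count in Step 3 are fine. They match the paper's Case 1, where every red component has fewer than $(2+\gamma/q)m$ vertices and Lemma~\ref{lem:case1} applies. The gap is at the point you flag yourself. The Gallai--Edmonds/Tutte set $S_D$ (the paper's $A_i$) can have almost $m$ vertices, as you note, so discarding it costs order $m$, not $\gamma m$. In general the blue cycle \emph{must} use these vertices.

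Here is a colouring where this happens. Take $q=3$, $m=\frac{4n}{9}$ (so $2n=4.5m$) and $N=(6+\gamma)m$.
\begin{itemize}
\item Split the vertices into $U_1,U_2$ of size $N/2$, with $U_i=A_i\cup D_i$ and $|A_i|=m-1$.
\item Colour red every edge inside $A_i$ and every edge between $A_i$ and $D_i$.
\item Colour blue every edge inside $D_i$ and every edge between $U_1$ and $U_2$.
\end{itemize}
A red cycle has at most as many vertices in $D_i$ as in $A_i$, so there is no red $C_{2m}$.

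A centre in $A_i$ has blue degree $N/2<2n$. So the centre $v$ lies in, say, $D_2$, and $N_B(v)=(D_2\setminus\{v\})\cup U_1$ has $(5+\gamma)m$ vertices.
\begin{itemize}
\item Deleting $A_1$ leaves about $(4+\gamma)m<2n$ vertices.
\item Treating $U_1$ as one part of a multipartite graph also fails. That part has about $(3+\gamma/2)m$ vertices, more than half, which caps cycles at about $(4+\gamma)m$.
\end{itemize}
So every blue $C_{2n}$ in $N_B(v)$ must use $\Omega(m)$ vertices of $A_1$ together with the blue clique $D_1$. Each vertex of $A_1$ has blue degree only about $2m<|N_B(v)|/2$ there. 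Neither your discarding step nor your part-size optimisation can produce such a cycle.

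\textbf{What is missing is the paper's Lemma~\ref{lem:case2}.}
\begin{itemize}
\item The centre $x$ is taken in $D_t$ for the largest red component, so only $A_t$ leaves the neighbourhood. This loss of up to $(1+\frac{\alpha}{2})m$ is paid for by the inequality $N-m\geq(2+\frac{q-1}{q^2})n$, checked separately in each regime, not by $\gamma$.
\item Every other $A_i$ is kept. Let $H$ be the blue neighbourhood of $x$ minus $A_t$, and $N_H=|V(H)|$. All vertices outside $\bigcup_{i<t}A_i$ have degree above $N_H/2$ in $H$.
\item A counting argument shows that at most one index $i<t$ can contain vertices of degree at most $N_H/2$.
\item For that index, the Schmeichel--Hakimi (Chv\'atal-type) degree-sequence criterion gives pancyclicity. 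The at most $|A_i|$ low-degree vertices are outnumbered by the more than $|A_i|$ vertices of $D_i$ remaining in $H$. Each of these has degree $d_H(w)$ with $d_H(u)+d_H(w)>N_H$ for every low-degree $u$.
\end{itemize}
The $A_i$ force a degree-sequence argument of this kind, rather than a minimum-degree or part-size argument.

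\textbf{Tightness.} Your explanation is off. In both extremal colourings, $q$ red cliques of order $2m-1$ or $q+1$ red cliques of order about $\frac{2n-1}{q}$, every blue degree is at most $2n-1$. The obstruction is therefore the absence of a blue $K_{1,2n}$, not a part exceeding half.
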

Note that in the above, $q$ is the unique integer such that $\frac{n}{q}\leq m< \frac{n}{q-1}$; i.e.~$q=\ceiling{\frac{n}{m}}$.

So by combining Theorems \ref{thm:CycleWheel}, \ref{thm:m>n/2}, and \ref{thm:m<n/2} with the existing result \cite{ZBC2} in the case $m\geq n+251$, we have the following (asymptotically) complete picture.  For all integers $m,n\geq 2$ and $q\geq 3$,
\begin{equation}\label{eq:cyclecycle}
R(C_{2m}, W_{2n})= 
    \begin{cases} 
     4m+\Theta(1), & m\geq n \\
     2m+2n+\Theta(1), & \frac{n}{2}\leq m<n\\
     \begin{cases}
    (2q+o(1))m, & \frac{(q+1)n}{q^2}\leq m<\frac{n}{q-1} \\
     (2+\frac{2}{q}+o(1))n, & \frac{n}{q}\leq m<\frac{(q+1)n}{q^2}
     \end{cases},
     & m_0\leq m<\frac{n}{2}
   \end{cases}.
\end{equation}

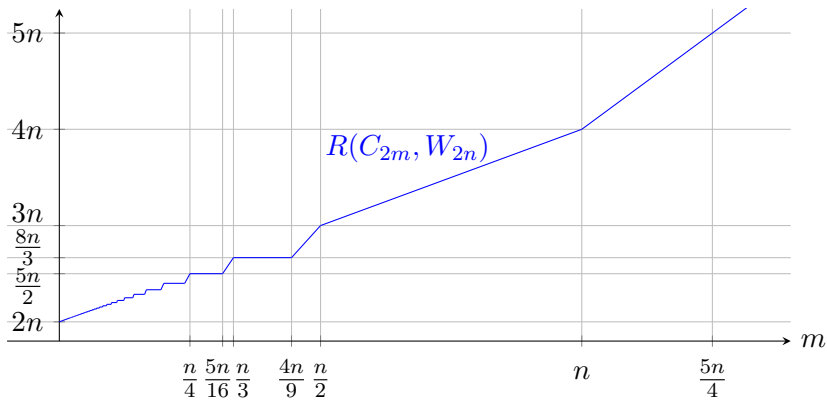
\begin{figure}[ht]
\centering
\begin{tikzpicture}
\begin{axis}[
    width=12cm, height=6cm,
    axis lines=middle,
    xmin=-0.1, xmax=1.4,
    ymin=1.8, ymax=5.25,
    x label style={anchor=west},
    xlabel={$m$},
    ylabel={},
    grid=both,
    % 1. Set the positions of the ticks
    xtick={0, 1/4, 5/16, 1/3, 4/9, 1/2, 1, 5/4},
    % 2. Set the custom labels in the exact same order
    xticklabels={
        0, 
        $\frac{\mathstrut n}{4}$, 
        $\frac{\mathstrut 5n}{16}$~~, 
        ~~$\frac{\mathstrut n}{3}$, 
        $\frac{\mathstrut 4n}{9}$, 
        $\frac{\mathstrut n}{2}$, 
        $\mathstrut n$,
        $\frac{\mathstrut 5n}{4}$, 
    }, 
    ytick={0, 1, 2, 5/2, 8/3, 3, 4, 5},
    yticklabels={0,$n$, $2n$, \raisebox{-4ex}{$\frac{5n}{2}$}, \raisebox{2.5ex}{$\frac{8n}{3}$}, \raisebox{2ex}{$3n$}, $4n$, $5n$}, 
]

% a(x): Sloped line y = 4x for x >= 1
\addplot[blue, domain=1:2.5] {4*x};

% b(x): Sloped line y = 2(1+x) for 1/2 <= x <= 1
\addplot[blue, domain=0.5:1] {2*(1+x)};

% c(x) and d(x): For 0 < x <= 1/2
% Instead of sampling q(x) = ceil(1/x), we loop through the integer steps (k = 3 to 50).
% This draws the exact geometric segments to avoid jagged edges near x=0.
\foreach \k in {3, 4, ..., 50} {
    \edef\temp{
        % d(x): The flat segments where value is 2 + 2/k
        \noexpand\draw[blue] 
            (axis cs:{1/\k}, {2 + 2/\k}) -- 
            (axis cs:{(\k+1)/(\k^2)}, {2 + 2/\k});
            
        % c(x): The sloped segments connecting the flat ones
        \noexpand\draw[blue] 
            (axis cs:{(\k+1)/(\k^2)}, {2 + 2/\k}) -- 
            (axis cs:{1/(\k-1)}, {2*\k/(\k-1)});
    }
    \temp
}

% Fill the final gap as x approaches 0 (where y converges to 2)
\draw[blue] (axis cs:0, 2) -- (axis cs:{1/50}, {2+2/50});

\node[blue, anchor=south west] at (axis cs:.49, 3.55) {$R(C_{2m}, W_{2n})$};

\end{axis}
\end{tikzpicture}
    \caption{The asymptotically exact value of $R(C_{2m}, W_{2n})$ for all sufficiently large $m,n$. For the purposes of this figure, we treat $m$ as a function of $n$.}
    \label{fig:cyclewheel}
\end{figure}

\subsubsection{Cycles versus stars, fans, and wheels}

As mentioned before, we have $K_{1,2n}\subseteq F_n\subseteq W_{2n}$ and thus 
\begin{equation}\label{eq:sfw}
R(C_{2m}, K_{1,2n})\leq R(C_{2m}, F_n)\leq R(C_{2m}, W_{2n}).
\end{equation}

Using Szemer\'edi's regularity lemma, You and Lin \cite{YL} proved $R(C_{2m}, F_n)= (4+o(1))m$ for $m\geq n$ and $R(C_{2m}, F_n)= (2+o(1))(m+n)$ for $\frac{n}{2}\leq m< n$.  Because of \eqref{eq:sfw}, our Theorems \ref{thm:CycleWheel} and \ref{thm:m>n/2} imply the results of \cite{YL} with improved bounds (in that our results hold for much smaller $m$ and $n$ and the error term is only a constant rather than linear in $n$). 

Allen et al.~\cite{ALPZ} proved that for all sufficiently large $2\leq m\leq n$ and $q\geq 2$,
%where $q\geq 2$ is the unique integer such that $\frac{2n-1}{q}< 2m-1\leq \frac{2n-1}{q-1}$,
\begin{equation}\label{eq:cyclestar}
R(C_{2m}, K_{1,2n})= 
    \begin{cases} 
      2qm-(q-1), & \frac{(q+1)(2n-1)}{q^2}< 2m-1\leq \frac{2n-1}{q-1} \\
     2n+\floor{\frac{2n-1}{q}}+1, & \frac{2n-1}{q}< 2m-1\leq \frac{(q+1)(2n-1)}{q^2}
   \end{cases}.
\end{equation}
We note that \cite{ALPZ} actually handles $R(C_{2m}, K_{1,n})$ more generally, but for comparison sake we only state their result in terms of $K_{1,2n}$ (that being said, the general result only differs by a small constant because $K_{1,2n-2}\subseteq K_{1,2n-1}\subseteq K_{1,2n}$.)

Note that by \eqref{eq:sfw}, our Theorem \ref{thm:m<n/2} asymptotically implies \eqref{eq:cyclestar} when $m\leq \frac{n}{2}$.  However, when $m\geq \frac{n}{2}$, the behavior of $R(C_{2m}, K_{1,2n})$ is different (see Figure \ref{fig:cyclewheelstar}).  The case when $\frac{n}{2}<m\leq n$ is included in \eqref{eq:cyclestar} when $q=2$.  When $n< m< 2n$, Zhang, Broersma, and Chen \cite{ZBC4} proved that $R(C_{2m}, K_{1,2n})=4n$, and when $m\geq 2n$, it follows from Dirac's theorem that $R(C_{2m}, K_{1,2n})=2m$.

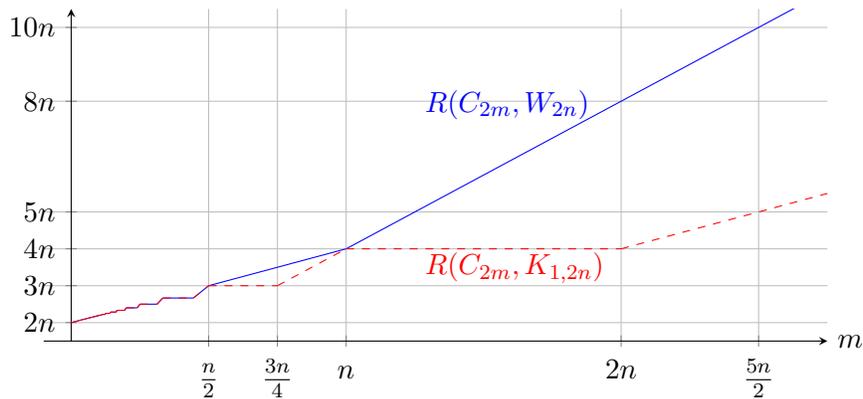
\begin{figure}[ht]
\centering
\begin{tikzpicture}
\begin{axis}[
    width=12cm, height=6cm,
    axis lines=middle,
    xmin=-0.1, xmax=2.75,
    ymin=1.5, ymax=10.5,
    x label style={anchor=west},
    xlabel={$m$},
    ylabel={},
    grid=both,
    % 1. Set the positions of the ticks
    xtick={0, 1/2, 3/4, 1, 2, 2.5},
    % 2. Set the custom labels in the exact same order
    xticklabels={0, $\frac{\mathstrut n}{2}$, $\frac{\mathstrut 3n}{4}$, $\mathstrut n$, $\mathstrut 2n$, $\frac{\mathstrut 5n}{2}$}, 
    ytick={0, 1, 2, 3, 4, 5, 8, 10},
    yticklabels={0, $n$, $2n$, $3n$, $4n$, $5n$, $8n$, $10n$}, 
]

% a(x): Sloped line y = 4x for x >= 1
\addplot[blue, domain=1:2.75] {4*x};

% b(x): Sloped line y = 2(1+x) for 1/2 <= x <= 1
\addplot[blue, domain=0.5:1] {2*(1+x)};

% a(x): Horizontal line at y = 4 for x >= 1
\addplot[red, dashed, domain=1:2] {4};

% b(x): Sloped line y = 2x for x >= 2
\addplot[red, dashed, domain=2:2.75] {2*x};

% c(x) and d(x): For 0 < x <= 1/2
% Instead of sampling q(x) = ceil(1/x), we loop through the integer steps (k = 3 to 50).
% This draws the exact geometric segments to avoid jagged edges near x=0.
\foreach \k in {3, 4, ..., 50} {
    \edef\temp{
        % d(x): The flat segments where value is 2 + 2/k
        \noexpand\draw[blue] 
            (axis cs:{1/\k}, {2 + 2/\k}) -- 
            (axis cs:{(\k+1)/(\k^2)}, {2 + 2/\k});
            
        % c(x): The sloped segments connecting the flat ones
        \noexpand\draw[blue] 
            (axis cs:{(\k+1)/(\k^2)}, {2 + 2/\k}) -- 
            (axis cs:{1/(\k-1)}, {2*\k/(\k-1)});
    }
    \temp
}

% c(x) and d(x): For 0 < x <= 1/2
% Instead of sampling q(x) = ceil(1/x), we loop through the integer steps (k = 3 to 50).
% This draws the exact geometric segments to avoid jagged edges near x=0.
\foreach \k in {2, 3, ..., 50} {
    \edef\temp{
        % d(x): The flat segments where value is 2 + 2/k
        \noexpand\draw[red, dashed] 
            (axis cs:{1/\k}, {2 + 2/\k}) -- 
            (axis cs:{(\k+1)/(\k^2)}, {2 + 2/\k});
            
        % c(x): The sloped segments connecting the flat ones
        \noexpand\draw[red, dashed] 
            (axis cs:{(\k+1)/(\k^2)}, {2 + 2/\k}) -- 
            (axis cs:{1/(\k-1)}, {2*\k/(\k-1)});
    }
    \temp
}

% Fill the final gap as x approaches 0 (where y converges to 2)
\draw[blue] (axis cs:0, 2) -- (axis cs:{1/50}, {2+2/50});

\draw[red, dashed] (axis cs:0, 2) -- (axis cs:{1/50}, {2+2/50});

\node[blue, anchor=south west] at (axis cs:1.25, 7.25) {$R(C_{2m}, W_{2n})$};

\node[red, anchor=south west] at (axis cs:1.25, 2.8) {$R(C_{2m}, K_{1,2n})$};

\end{axis}
\end{tikzpicture}
    \caption{The asymptotically exact value of $R(C_{2m}, W_{2n})$ compared to the exact value of $R(C_{2m}, K_{1,2n})$ for all sufficiently large $m,n$.}
    \label{fig:cyclewheelstar}
\end{figure}

So for all sufficiently large $m,n$ we have that $R(C_{2m}, F_n)$ and $R(C_{2m}, W_{2n})$ are asymptotically equal, and for all sufficiently large $2\leq m\leq \frac{n}{2}$ we have that $R(C_{2m}, K_{1,2n})$ and $R(C_{2m}, W_{2n})$ are asymptotically equal.  This latter result is quite surprising given that $W_{2n}$ is a far more complicated structure than $K_{1,2n}$.

\subsection{Dirac}

As a standalone result, we think it is more natural to state Theorem \ref{thm:starWheelExact} and Theorem \ref{thm:starWheel} as results about graphs with sufficiently large minimum degree.  The following well-known observation gives the link between the two formulations.

\begin{observation}\label{obs:dirac}
Let $m$ be a positive integer and let $H$ be a graph.
$R(K_{1,m}, H)\leq N$ if and only if every graph $G$ on $N$ vertices with $\delta(G)\geq N-m$ contains a copy of $H$.
\end{observation}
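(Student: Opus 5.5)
The plan is to unwind the definition of $R(K_{1,m}, H)$ directly, identifying a 2-coloring of $K_N$ with the graph $G$ formed by its blue edges. Color 1 (red) corresponds to the complement $\overline{G}$, and color 2 (blue) corresponds to $G$. The key observation is that, for any vertex $v$, the red degree is $d_{\overline{G}}(v) = N-1-d_G(v)$. A red $K_{1,m}$ exists precisely when some vertex has red degree at least $m$, that is, when $d_G(v)\leq N-1-m$ for some $v$. So having no red $K_{1,m}$ is equivalent to $\delta(G)\geq N-m$.

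For the forward direction, suppose $R(K_{1,m}, H)\leq N$ and let $G$ be a graph on $N$ vertices with $\delta(G)\geq N-m$. Color the edges of $G$ blue and the edges of $\overline{G}$ red. By the degree computation above, there is no red $K_{1,m}$. Since $R(K_{1,m},H)\leq N$, every 2-coloring of $K_N$ contains a red $K_{1,m}$ or a blue $H$, so this coloring must contain a blue $H$, which is a copy of $H$ in $G$.

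For the reverse direction, take any 2-coloring of $K_N$ and let $G$ be its blue graph. If some vertex has red degree at least $m$, we get a red $K_{1,m}$. Otherwise every vertex has red degree at most $m-1$, so $\delta(G)\geq N-1-(m-1)=N-m$, and the hypothesis gives a copy of $H$ in $G$, i.e., a blue $H$. Hence $R(K_{1,m},H)\leq N$.

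There is no real obstacle here. The only step needing care is the off-by-one in translating "no red $K_{1,m}$" into "red degree at most $m-1$" and then into $\delta(G)\geq N-m$. One should also note that Ramsey monotonicity in $N$ is not needed, since the bound $R\leq N$ is by definition a statement about colorings of $K_N$ itself.
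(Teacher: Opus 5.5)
Your proof is correct. It is the standard argument: take the blue graph $G$, use $d_R(v)=N-1-d_G(v)$, and note that having no red $K_{1,m}$ is the same as $\delta(G)\geq N-m$. The paper calls this a well-known observation and gives no proof, so there is no different route to compare.

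One correction to your closing remark: it has the monotonicity point backwards. The paper defines $R(K_{1,m},H)$ as the \emph{smallest} $N$ with the Ramsey property. So in the forward direction, $R(K_{1,m},H)=N_0\leq N$ only tells you the property for colorings of $K_{N_0}$. You obtain it for $K_N$ by restricting a coloring of $K_N$ to any $N_0$ of its vertices. This is trivial, but your forward direction does use it. It is the reverse direction that needs nothing beyond the minimality in the definition.
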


By Observation \ref{obs:dirac}, we have the following equivalent formulation of \eqref{eq:starwheel}.

\begin{theorem}\label{thm:dirac_wheel}
For all $\ep>0$ there exists $n_0$ such that for all integers $k, n$ with $n\geq n_0$ and $2\leq k<\frac{n}{2}$, if $G$ is a graph on $n$ vertices with \[
\delta(G)\geq 
    \begin{cases} 
     \frac{n+k}{2}+\ep n, & 2\leq k<  \frac{n}{3} \\
     2k, & \frac{n}{3}\leq k < \frac{n}{2}
   \end{cases},
\] then $W_{2k}\subseteq G$.  Furthermore, this degree condition is asymptotically tight.
\end{theorem}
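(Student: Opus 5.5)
The plan is to derive Theorem~\ref{thm:dirac_wheel} from \eqref{eq:starwheel}, that is, from Theorem~\ref{thm:starWheel} and Theorem~\ref{thm:starWheelExact}, using the translation in Observation~\ref{obs:dirac}.

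\textbf{Translating the parameters.} Rename the wheel parameter: the graph $G$ has $n$ vertices and we look for $W_{2k}$. Write $N:=n$ for the number of vertices and let $m$ be the star size, so that a minimum degree of $\delta$ corresponds to $m=N-\delta$. By Observation~\ref{obs:dirac}, $W_{2k}\subseteq G$ for every $G$ with $\delta(G)\geq N-m$ exactly when $R(K_{1,m},W_{2k})\leq N$.

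\textbf{The range $2\leq k<\frac{n}{3}$.} Take $\delta=\frac{n+k}{2}+\ep n$, so $m=\frac{n-k}{2}-\ep n$, rounded down. Since $k<\frac n3$, we get $m>\frac{n}{3}-\ep n\geq k$ once $\ep$ is small; this is harmless, because decreasing $\ep$ only strengthens the conclusion. Also $m\geq m_0$ once $n$ is large. Theorem~\ref{thm:starWheel}, applied with $\ep'$, gives
\[
R(K_{1,m},W_{2k})\leq(2+\ep')m+k\leq n-k-2\ep n+\ep' n+k\leq n
\]
as soon as $\ep'\leq 2\ep$. Observation~\ref{obs:dirac} then yields $W_{2k}\subseteq G$. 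Some care is needed with floors, since $m$ must be an integer and $\delta(G)\geq n-m$ must follow from the hypothesis; the slack $\ep n$ absorbs the rounding.

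\textbf{The range $\frac{n}{3}\leq k<\frac{n}{2}$.} Take $\delta=2k$, so $m=n-2k$, and note $m\leq k$. Use the exact small-star value $R(K_{1,m},W_{2k})=m+2k-\frac{1+(-1)^m}{2}\leq m+2k=n$, valid for $1\leq m\leq k$. This requires $m\geq 1$, which holds because $k<\frac n2$. Observation~\ref{obs:dirac} again gives the wheel.

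\textbf{Tightness.} I would use the matching lower bounds.
\begin{itemize}
\item For $k<\frac n3$: if $\delta(G)$ is at most about $\frac{n+k}{2}-\ep n$, then $m$ is at least about $\frac{n-k}{2}$, which exceeds $k$. The Li--Schiermeyer type construction gives $R(K_{1,m},W_{2k})\geq 2m+k-1$, which is at least about $n-2\ep n$. To make this work cleanly I would reuse the explicit construction behind \eqref{eq:LS} for general $m>k$, taking the lower bound part of \eqref{eq:starwheel}, so that some $G$ on $n$ vertices with that minimum degree has no $W_{2k}$.
\item For $k\geq\frac n3$: with $\delta=2k-1$, the $m\leq k$ formula gives $R(K_{1,m},W_{2k})>n$ up to parity, so degree $2k-O(1)$ is not enough. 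More simply, $W_{2k}$ needs a vertex of degree $2k$.
\end{itemize}

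The main obstacle is bookkeeping rather than ideas. The delicate points are that the two regimes meet at $k=\frac n3$, where $m$ switches between $>k$ and $\leq k$, that rounding and parity must be handled in the asymptotic statement, and that the lower-bound construction for $m>k$ must be valid for all $m$ in range and not only for $m\leq 3k-1000$.
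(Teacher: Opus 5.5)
Your route is the paper's: there, Theorem~\ref{thm:dirac_wheel} is stated as nothing more than the translation of \eqref{eq:starwheel} through Observation~\ref{obs:dirac}. Its tightness comes from the minimum-degree form of the Li--Schiermeyer example, together with the fact that $W_{2k}$ needs a vertex of degree $2k$. Your treatment of the range $\frac n3\leq k<\frac n2$ is correct.

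However, one step in the range $2\leq k<\frac n3$ is false as written. You claim that $m=\floor{\frac{n-k}{2}-\ep n}$ satisfies $m>\frac n3-\ep n\geq k$ ``once $\ep$ is small.'' But $k$ is quantified after $\ep$ and $n_0$. So for every fixed $\ep$ and large $n$ the window $\frac{(1-2\ep)n-2}{3}<k<\frac n3$ is nonempty, and throughout it $m\leq k$. Shrinking $\ep$ only narrows this window; it never removes it. In that window Theorem~\ref{thm:starWheel} is unavailable, since it is stated only for star size strictly larger than the wheel parameter. Indeed, for $m$ well below $k$ its bound $(2+\ep')m+k$ is false, because the true value there is $m+2k-O(1)$. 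The repair is one line. When $m\leq k$, the small-star value gives $R(K_{1,m},W_{2k})\leq m+2k\leq 3k<n$, and Observation~\ref{obs:dirac} applies as before.

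There is also a slip in your tightness argument for $k<\frac n3$. You say $2m+k-1$ is ``at least about $n-2\ep n$,'' which would prove nothing. What you need, and what holds with $m=\frac{n-k}{2}+\ep n$, is $2m+k-1\geq n+2\ep n-1>n$. Then $R(K_{1,m},W_{2k})>n$, which yields a $W_{2k}$-free graph on $n$ vertices with $\delta\geq n-m$.

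Your worry that the lower-bound construction might be limited to $m\leq 3k-1000$ is unfounded. The Li--Schiermeyer example holds for all $m\geq k\geq 2$; only the upper bound in Theorem~\ref{thm:starWheelExact} carries that restriction. Its minimum-degree version in the paper directly gives a $W_{2k}$-free graph with $\delta\geq\floor{\frac{n+k}{2}}-1$ for every $2\leq k<\frac n3$.
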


We also have the following equivalent formulation of Conjecture \ref{con:starwheel} (note that Theorem \ref{thm:starWheelExact} corresponds to the case when $\frac{n+2000}{7}\leq k< \frac{n}{3}$).

\begin{conjecture}
For all integers $n$ and $k$ with $3\leq k< \frac{n}{3}$, if $G$ is a graph on $n$ vertices with  
\begin{equation}\label{eq:delta}
\delta(G)\geq  
\begin{cases} 
    \floor{\frac{n+k}{2}}, & \text{$k-1$ is odd and $\floor{\frac{n+k}{2}}$ is odd}\\
    \ceiling{\frac{n+k}{2}}, & \text{ otherwise}
   \end{cases},
\end{equation}
then $W_{2k}\subseteq G$.  Furthermore, this is best possible.
\end{conjecture}

\subsection{Tur\'an}

The \emph{Tur\'an number} of a graph $H$, denoted $\ex(n,H)$, is the maximum number of edges in an $H$-free $n$-vertex graph. As is well known, the Tur\'an number of $H$ is related to its Ramsey number in the sense that if we have a 2-coloring of $K_N$ in which one of the color classes has more than $\ex(N,H)$ edges, we would get a monochromatic copy of $H$.  This is potentially useful if $\frac{\ex(N,H)}{\binom{N}{2}}$ is not much bigger than $\frac12$.

Yuan \cite{Yuan} proved that for all $k$ there exists $n_0$ such that for all $n\geq n_0$, 
\begin{equation}\label{eq:Yuan}
\ex(n,W_{2k})=\frac{n^2}{4}+\frac{k}{4}n+\frac{k^2}{16}+o(n^2).
\end{equation}
It would be interesting to extend \eqref{eq:Yuan} to the case where $k$ is linear in $n$. 

\subsection{Odd wheels}
Analogous to \eqref{eq:main} we have 
\begin{equation}\label{eq:mainodd}
R(K_{1,2n+1}, W_{2n+1})\leq R(W_{2n+1}, W_{2n+1})\leq 2\cdot R(C_{2n+1}, W_{2n+1}),
\end{equation}
but unlike the case for even wheels, the values of $R(K_{1,2n+1}, W_{2n+1})$ and $R(C_{2n+1}, W_{2n+1})$ were already known exactly for all positive integers $n$.  

Regarding the lower bound of stars versus odd wheels, it is easy to see (see \cite{LiSc}) that
\[
R(K_{1,m}, W_{2n+1})= 
    \begin{cases} 
    m+2n+1, & m\leq n \\
    3m+1, & m> n,
    \end{cases}
\]
%\ld{the lower bound in the first case comes from a $2n$ regular graph on $m+2n$ vertices.  The lower bound in the second case comes from a complete tripartite graph with $m$ vertices in each part.}
or phrased in the minimum degree language, if $G$ is a graph on $n$ vertices with 
\[
\delta(G) \geq 
   \begin{cases} 
     \frac{2n}{3}+1 & 1\leq k\leq  \frac{n}{3} \\
     2k+1 & \frac{n}{3} \leq k \leq \frac{n-3}{2},\\
   \end{cases}
\]
then $W_{2k+1}\subseteq G$.

Regarding the upper bound of odd cycles versus odd wheels, it is known (the case $m\geq n$ is due to \cite{CCNZ} and the case $m<n$ is due to \cite{ZZC}) that for all positive integers $m,n$ with $(m,n)\neq (1,1)$, 
\[
R(C_{2m+1}, W_{2n+1})= 
\begin{cases} 
  6m+1, & m > \frac{2n}{3}\\
  4n+3, & m\leq \frac{2n}{3}.
\end{cases}
\]
In particular, $R(C_{2n+1}, W_{2n+1})=6n+1$.  

So combining these known results with \eqref{eq:mainodd}, we have $$6n+4=R(K_{1,2n+1}, W_{2n+1})\leq R(W_{2n+1}, W_{2n+1})\leq 2\cdot R(C_{2n+1}, W_{2n+1})=12n+2.$$
However, very recently, Zhang and Chen \cite{ZC} showed $R(W_{2n+1})\leq \frac{32n}{3}+O(1)$.  The most interesting aspect of their result, aside from the beautiful proof, is that it beats the trivial upper bound of $2\cdot R(C_{2n+1}, W_{2n+1})$. 

We are able to refine Zhang and Chen's proof to obtain the following further improvement.

\begin{theorem}\label{thm:oddW}
There exists $c>0$ such that for all positive integers $n$, $R(W_{2n+1})\leq 10n+c$.
\end{theorem}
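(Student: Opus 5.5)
We plan to follow the approach of Zhang and Chen and sharpen the counting step at its core. Let $N=10n+c$ and fix a red/blue coloring of $K_N$ with no monochromatic $W_{2n+1}$. For any vertex $v$, let $d_R(v)$ and $d_B(v)$ be its red and blue degrees, with $d_R(v)+d_B(v)=N-1$. If $d_R(v)\geq R(C_{2n+1},W_{2n+1})=6n+1$, then inside $N_R(v)$ there is either a red $C_{2n+1}$ or a blue $W_{2n+1}$. A red $C_{2n+1}$ together with $v$ would be a red $W_{2n+1}$, so $N_R(v)$ would contain a blue $W_{2n+1}$. Hence every vertex has both degrees at most $6n$, and so both degrees are at least $4n+c-1$.

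The engine is the stability behind $R(C_{2n+1},W_{2n+1})$, applied inside a neighborhood. Take $v$ and set $A=N_R(v)$, of size between $4n$ and $6n$. The red graph on $A$ contains no $C_{2n+1}$. By the structural results for odd cycles (long-cycle/pancyclicity arguments in the style of Bondy and Brandt for graphs with many edges), either the red graph on $A$ is close to bipartite, or it has low red density. In the second case the blue graph on $A$ is dense.

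\textbf{Case 1: red on $A$ is near-bipartite.} Write $A\approx A_1\cup A_2$. Inside each $A_i$ the edges are almost all blue, with $|A_1|+|A_2|\geq 4n$, so some part has at least $2n$ vertices. A blue $W_{2n+1}$ needs a hub with a blue odd cycle of length $2n+1$ in its blue neighborhood. A nearly complete blue graph on more than $2n+1$ vertices contains such a wheel, by a Dirac-type argument after deleting a few low-degree vertices. So the structure must force both parts to have size at most about $2n+1$, which pins $|A|\le 4n+O(1)$. Then $d_R(v)\le 4n+O(1)$, so $d_B(v)\geq 6n-O(1)$. Running the symmetric argument on $N_B(v)$ gives a contradiction once $N\ge 10n+c$.

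\textbf{Case 2: blue on $A$ is dense.} Here we look for a blue hub $u\in A$ whose blue neighborhood within $A$ has blue minimum degree large enough to contain a blue $C_{2n+1}$. The threshold is roughly half of $2n+1$ plus the non-bipartiteness condition, again obtained from the odd-cycle Dirac results.

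The improvement over Zhang and Chen's $32n/3$ will come from a global double count rather than a per-vertex bound. Count monochromatic "cherries" (paths of length two with both edges the same color) over all vertices, and combine this with Goodman's bound on monochromatic triangles. The aim is to show that the average of $\min(d_R,d_B)$ over vertices forces some vertex $v$ whose red neighborhood has size between $4n$ and $5n$ while containing many blue edges. Such a vertex falls into the bipartite dichotomy above with parts too large, and the balance point of the resulting inequalities is intended to sit at $N=10n$.

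The main obstacle is exactly this refinement: making the stability statement "a red-$C_{2n+1}$-free graph on $|A|$ vertices is either near-bipartite with controlled parts or blue-dense enough to host a blue $W_{2n+1}$" quantitative enough to reach $10n$ rather than $32n/3$. First I would try choosing $v$ to maximize its red degree. I would then analyze the bipartition $A_1,A_2$ together with the red neighborhoods of vertices in $A_1$ reaching outside $A$. These must again be small, which would force a near-complete-multipartite global structure of total size at most $10n+O(1)$.
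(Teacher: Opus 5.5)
\textbf{There is a genuine gap.} Your proposal is not yet a proof. The step that is supposed to produce the constant $10$, the global cherry count combined with Goodman's bound, is only announced. The local dichotomy it is meant to feed is also not the right one.

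\textbf{Why the local dichotomy fails.} Your split is ``red on $A=N_R(v)$ is near-bipartite'' versus ``blue on $A$ is dense, so some hub $u\in A$ has a blue $C_{2n+1}$ in $N_B(u)\cap A$''. It misses the configuration that actually survives. Up to $O(1)$ vertices, $A$ is three disjoint red cliques, each of order between $n$ and $2n$, with all edges between them blue. In this configuration:
\begin{itemize}
\item there is no red $C_{2n+1}$ in $A$;
\item the red graph is $\Theta(n^2)$ edges away from bipartite;
\item the blue graph is a dense complete tripartite graph;
\item yet for every $u\in A$ the blue graph on $N_B(u)\cap A$ is complete bipartite, so no hub inside $A$ works.
\end{itemize}
The same picture, with three blue cliques of order $2n$, fits inside a blue neighborhood of size $6n$. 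So ``running the symmetric argument on $N_B(v)$'' does not close Case~1 either. No argument that looks for the whole wheel inside a single monochromatic neighborhood can succeed. Separately, a stability statement with $\epsilon n^2$ error only yields $|A|\le 4n+O(\epsilon n)$, which could at best give $10n+o(n)$ rather than $10n+c$.

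\textbf{First missing idea: the source of $10n$.} The value $10n$ comes from the diagonal value $R(C_{2n+1})=4n+1$. It does not come from $R(C_{2n+1},W_{2n+1})=6n+1$, which only gives the trivial $12n+2$. Since $N-1=10n+c-1$, pigeonhole gives every $v$ a color $i$ with $d_i(v)\ge 5n+c/2$. That is the regime to work in, not neighborhoods of size between $4n$ and $5n$. For every $u\in N_i(v)$ one has $|N_{3-i}(u)\cap N_i(v)|\le 4n$: a monochromatic $C_{2n+1}$ there is the rim of a wheel centered at $v$ (if it has color $i$) or at $u$ (if it has color $3-i$). Hence the color-$i$ graph on $N_i(v)$ has minimum degree at least $|N_i(v)|-4n-1\ge |N_i(v)|/5+5\ge n+c'+3$. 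This is strong enough to give exact structure, with no $\epsilon$-errors:
\begin{itemize}
\item Lemma~\ref{lem:2con} with $k=5$ deletes at most $3$ vertices and leaves $2$-connected components, each of order more than $n$.
\item Lemmas~\ref{lem:VZ} and~\ref{lem:ghs} force every component of order at least $2n+c'$ to be bipartite.
\item A short case analysis, using $W_{2n+1}\subseteq K_{1,1,n,n}$, leaves exactly three color-$i$ cliques with color $3-i$ between them.
\end{itemize}

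\textbf{Second missing idea: a second neighborhood.} The contradiction requires combining two neighborhoods.
\begin{itemize}
\item For $u$ in the first clique $V_1$, one shows $u\in M_1$ and decomposes $N_1(u)$ into $V_4\supseteq V_1\setminus(\{u\}\cup X')$, $V_5$ and $V_6$, with $V_5\cup V_6$ disjoint from $V_2\cup V_3$.
\item A vertex $w\in V_1\setminus(\{u\}\cup X')$ is then blue to all of $V_2\cup V_3\cup V_5\cup V_6$.
\item Blue edges $xy$ and $x'y'$, from $V_2$ to $V_5$ and from $V_2$ to $V_6$, must exist; otherwise there is a red clique on $2n+2$ vertices.
\item Splicing these two edges with a blue path of length $2$ in $G_B[V_2,V_3]$ and a blue path of length $2n-3$ in $G_B[V_5,V_6]$ gives a blue $C_{2n+1}$ in $N_B(w)$.
\end{itemize}
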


So as is stands now, the best bounds for odd wheels are 
\begin{equation}\label{eq:oddbounds}
6n+4\leq R(W_{2n+1})\leq 10n+O(1).
\end{equation}

Finally, for odd wheels, the Tur\'an result follows from a result of Simonovits \cite{Sim} on edge-critical graphs (see \cite{Dz13}).  In particular, for all $3\leq k\leq \frac{n+4}{6}$,
\[
\ex(n,W_{2k+1})=\floor{\frac{n^2}{3}}.
\]
It would certainly still be interesting to determine the value of $\ex(n, W_{2k+1})$ in the case when $\frac{n+10}{6}<k\leq \frac{n}{2}-1$.

\subsection{Notation}

Let $G$ be a graph and let $X,Y\subseteq V(G)$ with $X\cap Y=\emptyset$.  We write $G[X]$ for the graph on $X$ induced by edges with both endpoints in $X$ and $G[X,Y]$ for the bipartite graph on $X\cup Y$ induced by edges with one endpoint in $X$ and the other in $Y$.  Given a graph $G$ and a 2-coloring of the edges of $G$, we write $G_i$ for the graph on $V(G)$ induced by the edges of color $i$.  We write $N_i(v)$ for the neighborhood of $v$ in $G_i$ and $d_i(v)=|N_i(v)|$ for the degree of $v$ in $G_i$.  
Often we use $B$ and $R$ for the subscripts (to mean ``blue'' and ``red'' respectively) in place of 1 and 2.

The constants in the hierarchies used to state our results are chosen from right to left.
For example, if we claim that a result holds whenever $0< a\ll b\ll c\le 1$, then 
there are non-decreasing functions $f:(0,1]\to (0,1]$ and $g:(0,1]\to (0,1]$ such that the result holds for all $0<a,b,c\le 1$  with $b\le f(c)$ and $a\le g(b)$.  
Note that $a \ll b$ implies that we may assume in the proof that, e.g., $a < b$ or $a < b^2$.

\subsection{Overview}

Section \ref{sec:starWheel} is about the Ramsey number of $K_{1,m}$ versus $W_{2n}$.
In Section \ref{sec:2low}, we give the lower bounds in Theorem \ref{thm:starWheelExact} and Theorem \ref{thm:starWheel}, and in Section \ref{sec:2up} we prove the upper bounds.  Section \ref{sec:cycleWheel} is about the Ramsey number of $C_{2m}$ versus $W_{2n}$.  In Section \ref{sec:3low}, we give the lower bounds in Theorems \ref{thm:CycleWheel}, \ref{thm:m>n/2}, and \ref{thm:m<n/2}, and in Section \ref{sec:3up} we prove the upper bounds.  In Section \ref{sec:odd} we will prove Theorem \ref{thm:oddW}. In Section \ref{sec:Conc} we discuss some further open problems.

As for the methods used in the proofs of the upper bounds, the heavy lifting in Theorems \ref{thm:starWheelExact}, \ref{thm:starWheel}, \ref{thm:CycleWheel}, \ref{thm:m>n/2}, and \ref{thm:oddW} is done by using some deep results on (weak/even) pancyclicity by Gould, Haxell, and Scott (Theorem \ref{thm:GHS}) and Brandt, Faudree, and Goddard (Theorem \ref{thm:BFG}).  These results allow one to get even cycles of a specified length in 2-connected graphs with linear minimum degree below the pancyclicity threshold.  The most difficult result in the paper is Theorem \ref{thm:m<n/2} which uses Szemer\'edi's regularity lemma together with results about connected fractional matchings.  These results work together to give us useful structural information in the case when we don't have a red $C_{2m}$, from which we can prove the existence of a blue $W_{2n}$ using some ad-hoc arguments.  

\section{Stars versus wheels}\label{sec:starWheel}

In this section we prove $R(K_{1,m}, W_{2n})=(2+o(1))m+n$ for all $m> n$ (as well as the exact value for all $3\leq n<m\leq 3n-1000$).  

\subsection{Lower bound}\label{sec:2low}

Li and Schiermeyer \cite{LiSc} give the following lower bound on $R(K_{1,m}, W_{2n})$ when $m\geq n\geq 2$.  We reproduce their example here (using our notation) for convenience.  First we state a lemma needed for the construction.

\begin{lemma}[Lemma 1 in \cite{LiSc}]\label{lem:reg}
For all integers $n$ and $k$ such that $0\leq k-1\leq n$ and $n$ or $k-1$ is even, there exists a $(k-1)$-regular graph on $n$ vertices in which every component has at most $2k-1$ vertices.  
\end{lemma}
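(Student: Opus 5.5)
The plan is to build the graph explicitly as a disjoint union of small regular pieces. Each piece will be a complete graph or something very close to one. Write $d=k-1$. The cases $d=0$ (the empty graph, each component a single vertex) and $d=1$ (a perfect matching, which needs $n$ even, and this follows from the parity hypothesis) are immediate. From now on assume $d\geq 2$, so $n\geq d+1$. The key observation is that every integer $n$ with $d+1\leq n\leq 2d+1$ for which $nd$ is even admits a $d$-regular graph on exactly $n$ vertices. To see this, take the complement of a $(n-1-d)$-regular graph on $n$ vertices. Since $n-1-d\leq d<n$ and $n(n-1-d)\equiv nd \pmod 2$ (because $n(n-1)$ is even), such a regular graph exists by the standard circulant construction. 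The circulant is built on $\mathbb{Z}_n$ with connection set $\{\pm1,\dots,\pm\lfloor r/2\rfloor\}$, with $n/2$ added when $r$ is odd and $n$ is even.

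Next, I will write $n$ as a sum of admissible part sizes. First suppose $d$ is even. Then every part size in $[d+1,2d+1]$ is admissible, and any $n\geq d+1$ splits into parts of these sizes. Write $n=q(d+1)+s$ with $0\leq s\leq d$. Use $q-1$ copies of $K_{d+1}$ and one part of size $d+1+s\leq 2d+1$. This gives components of at most $2d+1=2k-1$ vertices.

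Now suppose $d$ is odd. Then $n$ is even by hypothesis, and a part size is admissible exactly when it is even. So I need to write the even number $n\geq d+1$ as a sum of even integers from $[d+1,2d+1]$, and $d+1$ is itself even. Write $n=q(d+1)+s$ with $s$ even and $0\leq s\leq d-1$. Use $q-1$ copies of $K_{d+1}$ and one part of size $d+1+s\leq 2d$. This again gives components of size at most $2k-1$.

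The only step needing care is the parity bookkeeping in the complement construction and the decomposition, in particular checking that the leftover part stays within $2d+1$ and has the correct parity. That check is routine. The main thing I would double-check is that the edge cases $n=d+1$ and $d\in\{0,1\}$ are consistent with the hypothesis $0\leq k-1\leq n$.
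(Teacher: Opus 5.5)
The paper does not prove this lemma; it quotes it from Li and Schiermeyer, so there is no in-paper argument to compare against. For $n\ge d+1$ your construction is correct and complete. You split $n$ into blocks whose sizes $b$ lie in $[d+1,2d+1]$ with $bd$ even, and place a $d$-regular graph on each block. The circulant on $\mathbb{Z}_b$ with connection set $\{\pm1,\dots,\pm\lfloor r/2\rfloor\}$ (plus $b/2$ when $r$ is odd) is $r$-regular whenever $r\le b-1$ and $rb$ is even. Your remainder bookkeeping keeps the last block in range: $s\le d$ in general, and when $d$ is odd, $s$ is even because $n$ and $d+1$ are, so $s\le d-1$.

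The one thing to fix is the sentence \emph{assume $d\ge 2$, so $n\ge d+1$}. That inference is invalid, since the hypothesis $0\le k-1\le n$ only gives $n\ge d$. The case $n=d$ cannot be rescued either. For even $n\ge 2$ with $k-1=n$, all hypotheses of the lemma as printed hold, yet no graph on $n$ vertices is $n$-regular. So the statement is false at that boundary, and the intended hypothesis is $k-1\le n-1$ (i.e.\ $k\le n$). That is the only regime used in the paper: in every application the prescribed degree is strictly smaller than the number of vertices. You should state this correction explicitly rather than appear to derive $n\ge d+1$, and your closing remark about edge cases should resolve the question instead of leaving it open.

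Two minor simplifications are also available. The complement step is unnecessary, because the same circulant construction directly gives a $d$-regular graph on $b$ vertices whenever $d\le b-1$ and $bd$ is even. The cases $d\in\{0,1\}$ need no separate treatment, since your decomposition already handles them, producing blocks $K_1$ or $K_2$.
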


\begin{example}
For all positive integers $m$ and $n$ with $m\geq n\geq 2$, we have 
\[
R(K_{1,m}, W_{2n})\geq 
\begin{cases} 
     2m+n-1, & m \text{ and } n \text{ are even} \\
     2m+n, & \text{otherwise}
   \end{cases}.
\]
In particular, $R(K_{1,2n}, W_{2n})\geq 5n-\frac{1+(-1)^n}{2}$.
\end{example}

\begin{proof}
If $n-1$ is even or $n+m-1$ is even (equivalently $n$ is odd or $m$ is odd), let $X, Y$ be disjoint sets where $|Y|=m$ and $|X|=n+m-1$.  Color all edges inside of $Y$ red and all edges between $X$ and $Y$ blue.  Inside of $X$, use Lemma \ref{lem:reg} so that the blue graph induced by $X$ is $n-1$ regular (and thus the red graph is $m-1$ regular) and every blue component has at most $2n-1$ vertices.  

If $n-1$ is odd and $m$ is even (equivalently $n$ and $m$ are even), let $X, Y$ be disjoint sets where $|Y|=m$ and $|X|=n+m-2$. Color all edges inside of $Y$ red and all edges between $X$ and $Y$ blue.  Inside of $X$, use Lemma \ref{lem:reg} so that the blue graph induced by $X$ is $n-1$ regular (and thus the red graph is $m-2$ regular) and every blue component has at most $2n-1$ vertices.  
\end{proof}

We also state their example using the minimum degree language.  

\begin{example}
Let $n$ and $k$ be integers with $2\leq k<\frac{n}{3}$.  There exists a graph $G$ on $n$ vertices with 
\[
\delta(G)=
\begin{cases} 
     \ceiling{\frac{n+k}{2}}-1, & \text{$k-1$ is even or $\floor{\frac{n+k}{2}}$ is even}\\
     \floor{\frac{n+k}{2}}-1, & \text{otherwise}
   \end{cases},
\]
such that $W_{2k}\not\subseteq G$.
\end{example}

\begin{proof}
First suppose $k-1$ is even or $\floor{\frac{n+k}{2}}$ is even.  In this case, let $\{X, Y\}$ be a partition of $V(G)$ with $|Y|=\ceiling{\frac{n-k}{2}}$ and $|X|=\floor{\frac{n+k}{2}}$.  Add all edges between $X$ and $Y$ and use Lemma \ref{lem:reg} inside $X$ so that $G[X]$ is a $(k-1)$-regular graph in which every component has at most $2k-1$ vertices (which is possible since $k-1$ is even or $\floor{\frac{n+k}{2}}$ is even).  The vertices in $Y$ have degree $\floor{\frac{n+k}{2}}$ and the vertices in $X$ have degree $\ceiling{\frac{n+k}{2}}-1$ and there is no copy of $W_{2k}$.  

Now suppose $k-1$ is odd and $\floor{\frac{n+k}{2}}$ is odd.  In this case, let $\{X, Y\}$ be a partition of $V(G)$ with $|Y|=\ceiling{\frac{n-k}{2}}+1$ and $|X|=\floor{\frac{n+k}{2}}-1$.  Add all edges between $X$ and $Y$ and use Lemma \ref{lem:reg} inside $X$ so that $G[X]$ is a $(k-1)$-regular graph in which every component has at most $2k-1$ vertices (which is possible since $|X|=\floor{\frac{n+k}{2}}-1$ is even).  The vertices in $Y$ have degree $\floor{\frac{n+k}{2}}-1$ and the vertices in $X$ have degree $\ceiling{\frac{n+k}{2}}$ and there is no copy of $W_{2k}$.  
\end{proof}

\subsection{Upper bound}\label{sec:2up}

In order to prove Theorems \ref{thm:starWheelExact} and \ref{thm:starWheel} we need a number of other related results. 

\begin{theorem}[Dirac \cite{Dir}]\label{thm:dir}
Let $n$ and $k$ be integers with $2\leq k\leq \frac{n}{2}$.  If $G$ is a 2-connected graph on $n$ vertices with $\delta(G)\geq k$, then $G$ has a cycle of length at least $2k$.
\end{theorem}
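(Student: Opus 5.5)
The plan is the classical longest-path argument of Dirac and P\'osa, with 2-connectivity used to close a long path into a long cycle.

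\textbf{Step 1 (rotation-maximal longest path).} Take a longest path $P=v_0v_1\cdots v_\ell$ in $G$, and among all longest paths choose one maximizing $d_P(v_0)+d_P(v_\ell)$. By maximality of $P$, every neighbor of $v_0$ and of $v_\ell$ lies on $P$. Since $\delta(G)\geq k$, we get $\ell\geq k$, both endpoints have at least $k$ neighbors on $P$, and $N(v_\ell)\subseteq\{v_0,\dots,v_{\ell-1}\}$.

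\textbf{Step 2 (crossing pair gives a long cycle).} Suppose there is an index $i$ with $v_0v_{i+1}\in E$ and $v_iv_\ell\in E$. Then
\[
v_0v_{i+1}v_{i+2}\cdots v_\ell v_iv_{i-1}\cdots v_0
\]
is a cycle through all $\ell+1$ vertices of $P$. If $\ell+1<n$, connectivity gives an edge from this cycle to a vertex outside it, and opening the cycle there produces a path longer than $P$, a contradiction. So the cycle is Hamiltonian, and its length $n\geq 2k$ finishes the proof.

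\textbf{Step 3 (no crossing pair).} Now assume no such $i$ exists. Let $a$ be the largest index with $v_0v_a\in E$ and $b$ the smallest index with $v_bv_\ell\in E$.
\begin{itemize}
\item If $b<a$, the cycle $v_0v_1\cdots v_a v_0$ has length $a+1\geq k+1$, and so does the cycle through $v_b,\dots,v_\ell$. This is not enough on its own, so the argument must instead use 2-connectivity.
\item The cleanest route is the standard one. Since $G$ is 2-connected, there is a path $Q$ from $\{v_0,\dots,v_{a-1}\}$ to $\{v_{b+1},\dots,v_\ell\}$ that is internally disjoint from $P$ and avoids the separation at a cut-vertex; this is the usual ``ear'' argument. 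Splicing $Q$ with edges $v_0v_j$ and $v_{j'}v_\ell$ chosen on the appropriate sides gives a cycle that contains at least $k$ vertices from each end-segment.
\end{itemize}

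In practice I would instead invoke the cleaner form of the count. Let $S=\{i: v_0v_{i+1}\in E\}$ and $T=\{i: v_iv_\ell\in E\}$. Both are subsets of $\{0,\dots,\ell-1\}$ of size at least $k$, and $S\cap T=\emptyset$, so $\ell\geq 2k$. This makes $P$ long, but it does not yet give a cycle.

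\textbf{Step 4 (closing the cycle, the main obstacle).} The hard step is converting a path with $\ell\geq 2k$ into a cycle of length $\geq 2k$ using 2-connectivity. I would follow Dirac's original proof. Take a longest cycle $C$ and suppose $|C|<2k$. Since $n\geq 2k$, there is a vertex outside $C$. By 2-connectivity (Menger's theorem), there are two disjoint paths from a component $H$ of $G-V(C)$ to $C$, with distinct ends $x,y\in V(C)$. Take a longest path inside $H$ starting at the attachment vertex. Its endpoint has all neighbors in $H\cup C$, and degree at least $k$. Counting arguments on the gaps between attachment points on $C$ then show that one can reroute through $H$ to obtain a cycle longer than $C$, unless $|C|\geq 2k$; the key fact is that consecutive attachment points on $C$ must be far apart, or else $C$ could be extended. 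The delicate part is the bookkeeping when the path in $H$ is short, so its endpoint has many neighbors on $C$. Here the non-adjacency of successors of attachment points, which is the same crossing argument as in Step 2, forces $|C|\geq 2\cdot(\text{number of }C\text{-neighbors})+(\text{path length})\cdot 2\geq 2k$.
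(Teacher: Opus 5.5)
The paper gives no proof of this statement; it is quoted from Dirac and used as a black box. So your argument has to stand on its own, and it has a genuine gap exactly where the content of the theorem lies.

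Steps~1--3 are correct, but they only show that either $G$ is Hamiltonian, or a longest path $P=v_0\cdots v_\ell$ has $\ell\geq 2k$ and no crossing pair. Two-connectivity has not been used at that point, and it is indispensable: two copies of $K_{k+1}$ sharing a vertex have minimum degree $k$ and a path on $2k+1$ vertices, but circumference $k+1$.

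Step~4 then asserts the conclusion rather than deriving it. Let $R=u_0\cdots u_p$ be a longest path in $H$ starting at $u_0\sim x\in V(C)$, and let $s=|N_C(u_p)|\geq k-p$. The gap count does give $|C|\geq 2s+2p\geq 2k$ \emph{provided} $u_p$ has a neighbour on $C$ other than $x$: the two gaps next to $x$ then contain at least $p+1$ vertices, and the others at least one. But nothing in your set-up excludes $N_C(u_p)=\emptyset$ (then $p\geq k$ and $R$ cannot be closed up through $C$) or $N_C(u_p)=\{x\}$. These are precisely the cases where 2-connectivity must do real work, and ``the bookkeeping'' is never carried out.

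The same objection applies to the ear $Q$ in Step~3. When $a<b$, 2-connectivity does not in general give a single ear from $\{v_0,\dots,v_{a-1}\}$ to $\{v_{b+1},\dots,v_\ell\}$. It only gives, for each interior $v_j$, some ear jumping over $v_j$, so your splicing claim is unsupported.

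Here is what is missing. First, contrary to your first bullet, the case $b<a$ needs no 2-connectivity at all. Pick $v_0v_i,v_jv_\ell\in E$ with $j<i$ and $i-j$ minimal. Then no neighbour of $v_0$ or $v_\ell$ lies strictly between $v_j$ and $v_i$, and the absence of a crossing pair forces $i\geq j+2$. So your disjoint sets $S,T$ lie in $\{0,\dots,j\}\cup\{i-1,\dots,\ell-1\}$, whose size equals the length of the cycle $v_0v_1\cdots v_jv_\ell v_{\ell-1}\cdots v_iv_0$. Hence this cycle has length at least $|S|+|T|\geq 2k$.

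Second, the case $a\leq b$ needs two ingredients.
\begin{enumerate}
\item For every $s<a$ there is a $v_s$--$v_a$ path inside $v_0\cdots v_a$ on at least $d(v_0)+1\geq k+1$ vertices: go from $v_s$ back to $v_0$, jump along the first chord of $v_0$ beyond $v_s$, and continue forward to $v_a$. Symmetrically, for every $t>b$ there is a $v_b$--$v_t$ path inside $v_b\cdots v_\ell$ on at least $k+1$ vertices.
\item Since no $v_j$ with $0<j<\ell$ is a cut vertex, each such $v_j$ is jumped over by a $P$-ear. Take $Q_1$ of maximal reach among ears starting before $v_a$, and $Q_{i+1}$ of maximal reach among ears starting before $v_{t_i}$. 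This yields ears $Q_i$ from $v_{s_i}$ to $v_{t_i}$ with $s_1<a\leq s_2<t_1\leq s_3<t_2\leq\cdots\leq s_r<t_{r-1}\leq b<t_r$. They are internally disjoint, since two sharing a vertex could be merged into an ear of larger reach, contradicting maximality.
\end{enumerate}
Route one strand through the odd-indexed ears and the other through the even-indexed ears, along the forward segments of $P$ between them. Joining the two strands by the end paths from the first ingredient gives a cycle on at least $2k+1$ vertices.
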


The following is a slight, but important, extension of the well-known Theorem \ref{thm:dir}.

\begin{theorem}[Voss, Zuluaga \cite{VZ}]\label{thm:VZ}
Let $n$ and $k$ be integers with $2\leq k\leq \frac{n}{2}$.  If $G$ is a 2-connected graph on $n$ vertices with $\delta(G)\geq k$, then $G$ contains an even cycle of length at least $2k$.  
\end{theorem}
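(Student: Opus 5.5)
The plan is to follow the standard longest-path proof of Dirac's circumference theorem, adding a parity refinement so that the cycle we extract is even. Throughout, $G$ is 2-connected on $n$ vertices with $\delta(G)\ge k$ and $2\le k\le n/2$.

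First, Theorem~\ref{thm:dir} gives a cycle of length at least $2k$. If $G$ is bipartite, every cycle is even and we are done, so assume $G$ is not bipartite.

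Next, take a longest path $P=v_0v_1\cdots v_\ell$. By maximality, every neighbor of $v_0$ and of $v_\ell$ lies on $P$. Both endpoints therefore have at least $k$ neighbors along $P$, so $\ell\ge k$, and the usual rotation argument applies. If $v_0$ is adjacent to $v_i$, then $v_0v_1\cdots v_iv_0$ is a cycle of length $i+1$. Let $i_{\max}$ be the largest index of a neighbor of $v_0$. Since $v_0$ has at least $k$ neighbors among $v_1,\dots,v_{i_{\max}}$, we get $i_{\max}\ge k$ and a cycle of length at least $k+1$.

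To force an even cycle, I would use two neighbors $v_i,v_j$ of $v_0$ with $i<j$. They give the cycle $v_0v_1\cdots v_iv_0$ of length $i+1$, and also the cycle $v_0v_iv_{i+1}\cdots v_jv_0$ of length $j-i+2$. So the achievable lengths include $i+1$ for each neighbor index $i$, and $j-i+2$ for each pair of neighbor indices $i<j$. If some neighbor index $i\ge 2k-1$ is odd, the cycle $v_0\cdots v_iv_0$ is even of length at least $2k$. Otherwise, combine the two endpoints. Let $A$ be the set of neighbor indices of $v_0$ and let $B$ be the set of indices $j$ with $v_\ell v_j\in E$. In Dirac's crossing argument one finds $i\in A$ and $j\in B$ with $j<i$. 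This yields the cycle
\[
v_0v_1\cdots v_jv_\ell v_{\ell-1}\cdots v_iv_0,
\]
whose length is $\ell+1-(i-j-1)$. Using the freedom of choosing among the roughly $k$ neighbors on each side, one adjusts the parity: either two consecutive choices differ by an odd gap, or all neighbor indices share a parity, and then the cycle $v_0v_1\cdots v_iv_0$ through a neighbor index of the right parity is already even. If no crossing exists, 2-connectivity is used exactly as in Dirac's proof. Two disjoint paths from $P$ to an outside vertex, or a pair of nested chords, produce a long cycle, and the lengths of the alternative routings again differ in parity, so at least one of them is even.

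I expect the main obstacle to be the parity bookkeeping when all neighbor indices of $v_0$ and $v_\ell$ are even, so that the natural cycles $v_0\cdots v_iv_0$ are all odd. In that case the $k$ neighbors of $v_0$ occupy $k$ even positions, which forces $\ell\ge 2k$. I would then use the crossing cycle above, whose parity is governed by $i-j$, to obtain an even cycle of length at least $2k$. Alternatively, one could try to reduce to the case where $G$ is non-bipartite and exploit an odd chord to switch parity while losing at most one vertex. The bound $\ell\ge 2k$ is what makes that loss affordable.
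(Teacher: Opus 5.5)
The paper gives no proof of this statement. It is quoted from Voss and Zuluaga \cite{VZ} and used as a black box, so your proposal has to stand on its own, and it has a genuine gap. Everything up to Dirac's bound is fine. The step that actually constitutes the theorem, a cycle that is \emph{simultaneously} even and of length at least $2k$, is asserted rather than proved. Your recurring move, ``the alternative routings have lengths of different parity, so at least one of them is even,'' controls parity only, not length. Dirac's count gives one cycle of length at least $2k$ with no slack. The parity-switched alternative (replacing the neighbor index $i$ of $v_0$ by the next one $i'$ with $i'-i$ odd, or rerouting around a chord) drops $i'-i$ vertices, or the whole bypassed segment, and this can be any odd number at least $3$. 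So the even cycle you end up with can be shorter than $2k$, and moving from $i$ to $i'$ can also destroy the crossing condition $j<i$. The fallback of an odd chord that switches parity ``while losing at most one vertex'' needs a chord of the form $v_sv_{s+2}$ on the long cycle, and nothing in the hypotheses supplies one.

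There are also concrete slips in the case analysis.
\begin{itemize}
\item Since $v_1\in N(v_0)$, the neighbor indices of $v_0$ are never all even. The at least $k-1$ remaining neighbors on even positions give only $\ell\geq 2k-2$, not $\ell\geq 2k$.
\item For $v_\ell$, the natural cycle $v_j\cdots v_\ell v_j$ has length $\ell-j+1$. So the obstructive configuration is really $N(v_0)\setminus\{v_1\}$ on even positions and $N(v_\ell)\setminus\{v_{\ell-1}\}$ on positions congruent to $\ell$ modulo $2$, not ``all indices even.''
\end{itemize}

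In that configuration, if $v_0v_\ell\notin E(G)$ and a crossing exists, your idea can be completed. Take $j<i$ with $i-j$ minimal. Then every crossing cycle is even, and no neighbor of $v_0$ lies strictly between $v_j$ and $v_i$. Counting the even positions available to $N(v_0)$ gives length at least $2k-1$, hence at least $2k$ by parity.

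The remaining cases are where the real work lies, and you do not address them:
\begin{itemize}
\item $v_0v_\ell\in E(G)$. Here $\ell$ is even and $V(P)$ spans a Hamiltonian cycle of odd length, from which an even cycle of length at least $2k$ must still be extracted using chords.
\item The non-crossing case. Here 2-connectivity is needed, and your only parity argument is the flawed one above.
\item The mixed configurations, e.g.\ when $v_0$ has odd-indexed neighbors but all of them below $2k-1$. These you cover only by ``one adjusts the parity.''
\end{itemize}
As written, the proposal establishes the bipartite case and Dirac's bound, not the theorem.
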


Given a graph $G$, let $\mathrm{ec}(G)$ be the length of the longest even cycle in $G$.

\begin{theorem}[Gould, Haxell, Scott \cite{GHS}]\label{thm:GHS}
Let $d>0$ and let $C:=75\cdot 10^4/d^5$.  If $G$ is a graph on $n\geq 45C/d^4$ vertices with $\delta(G)\geq dn$, then $G$ contains a cycle of length $2t$ for all integers $4\leq 2t\leq \mathrm{ec}(G)-C$.
\end{theorem}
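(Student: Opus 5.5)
Theorem~\ref{thm:GHS} is quoted from Gould, Haxell and Scott rather than proved here. If I had to prove it, I would show that a longest even cycle can be shortened two vertices at a time, except in a bounded window near its full length. Fix a longest even cycle $Q$ in $G$ and write $L=\mathrm{ec}(G)$. If $L<C+4$ there is nothing to prove, so assume $L\geq C+4$. The target lengths split into two ranges.

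\textbf{Short lengths.} For $2t$ bounded by a function of $d$, use density. A bipartite subgraph of $G$ with at least $dn^2/4$ edges contains $C_{2t}$ by supersaturation for even cycles (Bondy--Simonovits type bounds). The requirement $n\geq 45C/d^4$ is what makes these bounds apply. This range is routine.

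\textbf{Long lengths.} For the remaining even lengths up to $L-C$, I would build a \emph{shortening gadget} on $Q$. Orient $Q$ and label its vertices $v_0,\dots,v_{L-1}$. Each vertex of $Q$ has at least $dn$ neighbours, and
\begin{itemize}
\item either $\Omega(dL)$ of these lie on $Q$,
\item or many of them lie off $Q$.
\end{itemize}
In the first case I pigeonhole chord endpoints into $O(1/d)$ arcs of $Q$. In the second case I pigeonhole common neighbours of pairs of vertices of $Q$. Either way I expect to find two ``crossing'' chords, or short detours through common neighbours, whose endpoints sit at prescribed positions. Rerouting $Q$ along such a configuration discards an arc and replaces it by a path of prescribed length. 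Because $Q$ is a \emph{longest} even cycle, the rerouted cycles cannot be longer even cycles. This maximality should force the available shortcuts to remove arcs of every even length in a full interval $[2,\ell]$ with $\ell\ge L/\mathrm{poly}(1/d)$.

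\textbf{Iteration.} Next I iterate. Cut $Q$ into $O(1/d)$ arcs and use the shortcut gadgets inside different arcs, which are essentially independent. Combining the gadgets lets me drop any even amount up to about $L-O(1/d^{O(1)})$. The two chords of a gadget give the needed 2-step control, since an even-length detour against an even-length arc keeps the cycle even. The constant lost is at most $C=75\cdot10^4/d^5$, and this should be exactly where that form of $C$ comes from.

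\textbf{Main obstacle.} The hard step is making the gadget argument produce \emph{every} even length. Pigeonholing easily gives many shortcuts, but they might all change the length by multiples of some $r>2$, or flip its parity. I would first try to exploit maximality of $Q$ together with a parity-adjusting triangle or $C_5$-free argument in the neighbourhoods. If $Q$ has an odd-length chord pattern, combine two such shortcuts so their parities cancel. If no such pattern exists, the neighbourhood of $Q$ is nearly bipartite, and in that case a direct bipartite path-length argument should give all even lengths.
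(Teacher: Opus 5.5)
The paper gives no proof of this statement: it is quoted from Gould, Haxell and Scott and used as a black box. So the question is whether your sketch is a proof, and it is not. The step you label the ``main obstacle'' is the whole content of the theorem, and nothing in the proposal resolves it.

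Your short-length step is fine. Bondy--Simonovits gives $C_{2\ell}$ for every $2\le \ell\le 2\sqrt{n}$ as soon as $e(G)\ge 200n^{3/2}$, which holds here.

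The long range, however, has no mechanism.
Maximality of $Q$ is a purely negative constraint. It only forbids configurations that would produce an even cycle longer than $L$, such as two adjacent vertices off $Q$ attached to consecutive vertices $v_i,v_{i+1}$. The reroutings you need never lengthen $Q$: two crossing chords $v_iv_{p+j}$, $v_{i'}v_{p+j'}$ with $i<i'<p+j<p+j'$ give cycles of lengths $(i'-i)+(j'-j)+2$ and $L-(i'-i)-(j'-j)+2$. So maximality says nothing about which of them exist.

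Pigeonholing a single dense family of chords also cannot give every even shift. Take the balanced blow-up of $C_5$ with parts of even size, which has minimum degree $2n/5$. Let $Q$ be the Hamiltonian cycle running through the parts in the cyclic order $1,2,3,4,5,1,2,\dots$, so the part of $v_i$ is determined by $i \bmod 5$ and $L=n\equiv 0\pmod 5$. The chords from part-$1$ vertices of one arc $A$ to part-$2$ vertices of a disjoint arc $B$ form exactly the kind of dense family your pigeonholing produces. Yet every cycle made of two of these chords and two arcs of $Q$ has length $\equiv 2\pmod 5$. To reach every residue you must combine chords of different types, i.e.\ use the high degree of \emph{every} vertex of $Q$ in a genuinely additive way. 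Neither the ``parity-adjusting triangle'' nor the remark that a nearly bipartite neighbourhood ``should give all even lengths'' supplies such an argument.

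The iteration step has a separate gap. Pigeonholing finds one dense pair of arcs somewhere; it does not give a gadget attached to each of $O(1/d)$ arcs. Gadgets that share arcs, cross one another, or use the same off-cycle detour vertices are not independent. Yet to cover every even length between about $4\sqrt n$ and $L-C$ by shortening $Q$, you must be able to discard all but $O(\sqrt n)$ of its vertices with exact control of the resulting length. The gadgets would therefore need removable ranges summing to nearly $L$, all realizable simultaneously, and nothing in the sketch provides this.

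Note also that if your scheme worked it would give every even length up to $L-2$, whereas the statement only claims lengths up to $\mathrm{ec}(G)-C$. The slack $C$ sits at the top, not the bottom, so your account of ``where $C$ comes from'' does not match the theorem. It also shows that the unproved claim that maximality forces every small even shortening is carrying far more weight than the sketch gives it.

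What is missing is a genuine length-flexibility lemma. One example would be a dense piece of $G$ attached to a long subpath of $Q$, in which two fixed vertices are joined by paths of every length of one parity over a long interval. That is what turns one long even cycle into all shorter ones.
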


We also need the following lemma\footnote{While we won't reproduce the formal proof here, we note that it follows by simply deleting cut vertices until none remain.  If we do this $k-1$ times, we will have a graph on $n-(k-1)$ vertices with at least at least $k$ components, each of order greater than $\delta(G)-(k-1)>\frac{n}{k}$, a contradiction.  So this process must terminate in at most $k-2$ steps.} which was proved in \cite{ALPZ}.  

\begin{lemma}\label{lem:2con}
For all integers $n\geq k\geq 2$, if $G$ is a graph on $n$ vertices with $\delta(G)\geq \frac{n}{k}+k$, then there exists a set $X\subseteq V(G)$ with $|X|\leq k-2$ such that every component of $G-X$ is 2-connected.  
\end{lemma}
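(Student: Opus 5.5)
The plan is to iteratively delete cut vertices and count components to show the process stops after at most $k-2$ deletions. Set $X_0=\emptyset$ and $G_0=G$. Given $G_i=G-X_i$ with $|X_i|=i$, stop if every component of $G_i$ is 2-connected. Otherwise some component of $G_i$ fails to be 2-connected. Since each component will have many vertices (shown below), that component contains a cut vertex $v$; set $X_{i+1}=X_i\cup\{v\}$. The goal is to show this stops with $i\le k-2$, which gives the required $X$.

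The key invariant is that after $i$ deletions, $G_i$ has at least $i+1$ components, each of order greater than $\delta(G)-i$.

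\textbf{Order bound.} Any vertex $u$ of $G_i$ has degree at least $\delta(G)-i$ in $G_i$. Hence its component has at least $\delta(G)-i+1>\delta(G)-i$ vertices. For $i\le k-1$ this is at least $\frac{n}{k}+1\ge 3$ when $n\ge k\ge2$, so every component has at least $3$ vertices. A connected graph on at least 3 vertices that is not 2-connected has a cut vertex, so the step above is well defined.

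\textbf{Component count.} Deleting a cut vertex of one component splits it into at least two pieces. So the number of components increases by at least one at each step, starting from at least $1$ at $i=0$.

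Now suppose the process has not stopped by the time $i=k-1$, so $k-1$ deletions have been made. Then $G_{k-1}$ has at least $k$ components, each of order greater than $\delta(G)-(k-1)\ge \frac nk+1$. Summing gives
\[
n-(k-1)=|V(G_{k-1})|>k\left(\tfrac nk+1\right)=n+k,
\]
a contradiction. Hence at most $k-2$ deletions occur, and the final $X$ satisfies $|X|\le k-2$ with every component of $G-X$ 2-connected.

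The main point requiring care is the justification that every non-2-connected component has a cut vertex. This needs each component to have at least 3 vertices, since $K_1$ and $K_2$ are degenerate cases; the order bound above supplies this. The remaining arithmetic is routine.
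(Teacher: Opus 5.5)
Your proof is correct and is the same argument the paper sketches: delete cut vertices one at a time, and note that after $k-1$ deletions there would be at least $k$ components, each of order greater than $\delta(G)-(k-1)\ge \frac{n}{k}+1$, which contradicts the vertex count $n-(k-1)$. One small arithmetic slip: the component-order bound is actually $\delta(G)-i+1\ge \frac{n}{k}+2$, and you need this stronger bound (rather than $\frac{n}{k}+1$) to get order at least $3$ when $k\le n<2k$, a range in which the hypothesis can hold.
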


\subsubsection{Asymptotic upper bound}

We now prove Theorem \ref{thm:starWheel} which says that for all $\ep>0$ there exists $m_0$ such that for all positive integers $m$ and $n$ with $m\geq n$ and $m\geq m_0$, 
$R(K_{1,m}, W_{2n})\leq (2+\ep)m+n.$

\begin{proof}[Proof of Theorem \ref{thm:starWheel}]
Let $\ep>0$, let $C=75\cdot 10^4/\ep^5$, and let $m_0=45C/\ep^4$ (where $C$ and $m_0$ are the constants from Theorem \ref{thm:GHS}).  Let $m$ and $n$ be positive integers with $m\geq m_0$ and $m\geq n$. Let $N=(2+\ep)m+n$ and consider an arbitrary 2-coloring of $K_N$.  Suppose there is no red copy of $K_{1,m}$ and let $G$ be the graph consisting of the blue edges only.  For all $v\in V(G)$, set $G_v:=G[N(v)]$.  Note that $\delta(G)\geq N-1-(m-1)=N-m=(1+\ep)m+n$ and 
\begin{equation}\label{eq:delGv}
\delta(G_v)\geq 2\delta(G)-N=N-2m=n+\ep m.  
\end{equation}

\begin{claim}
There exists $x\in V(G)$ such that $G[N(x)]$ contains a 2-connected subgraph on at least $2n+C$ vertices with minimum degree at least $n+\frac{C}{2}$.
\end{claim}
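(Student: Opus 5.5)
The natural route is to apply Lemma \ref{lem:2con} inside a single neighborhood $G_x=G[N(x)]$, and then to choose $x$ so that some resulting component is large. For any $x$, the graph $G_x$ has at most $N-1<(3+\ep)m$ vertices (using $n\le m$) and, by \eqref{eq:delGv}, $\delta(G_x)\ge n+\ep m$. I would take $k:=\ceiling{4/\ep}$. Then $|N(x)|/k+k\le \frac{(3+\ep)\ep m}{4}+k\le n+\ep m$ once $\ep<1$ and $m\ge m_0$, since $m_0$ is huge compared with $1/\ep$. So Lemma \ref{lem:2con} gives a set $X_x\subseteq N(x)$ with $|X_x|\le k-2$ such that every component of $G_x-X_x$ is $2$-connected. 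Each such component $D$ has minimum degree at least $n+\ep m-k\ge n+\frac C2$, because $\ep m\ge \ep m_0=45C/\ep^3\gg C+k$. So the minimum-degree and $2$-connectivity requirements are automatic, and everything reduces to making $|D|\ge 2n+C$ for some $x$ and some component $D$.

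\textbf{Easy regime.} Each component has at least $\delta+1\ge n+\ep m-k+1$ vertices. This already exceeds $2n+C$ whenever $n\le \ep m-C-k$, and then any $x$ works. So the real work is the regime $n>\ep m-C-k$, where $n$ is linear in $m$.

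\textbf{Hard regime.} Here I would argue by contradiction: suppose that for every $x$ all components of $G_x-X_x$ have fewer than $2n+C$ vertices. Since $|N(x)|\ge (1+\ep)m+n>2n+C+k$, there are at least two components $D_1,D_2$. Fix $y\in D_1$. Its blue neighbours inside $N(x)$ lie in $D_1\cup X_x$, so it sends red edges to all of $N(x)\setminus(D_1\cup X_x)$. Let $Z:=V\setminus(N(x)\cup\{x\})$, so $|Z|\le m-1$. The blue neighbourhood of $y$ is then contained in $D_1\cup X_x\cup Z\cup\{x\}$. This pins down the structure: $D_1$ has size roughly between $n+\ep m$ and $2n+C$, and $y$ has at least $(1+\ep)m+n-|D_1|-k\ge \ep m-n-C$ blue neighbours in $Z$ (and many more when $|D_1|$ is near its lower bound).

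I would then re-centre at $x':=y$. Its neighbourhood $N(y)$ sits inside $D_1\cup X_x\cup Z\cup\{x\}$ and has minimum degree at least $n+\ep m$ in $G$. The vertices of $D_1$ are densely joined to one another. The aim is to show that $G[N(y)]$ cannot split into pieces each smaller than $2n+C$. The tool would be a counting or averaging argument over the edges between $D_1$ and $Z$. Each vertex of $D_1$ is blue to at most about $m$ vertices of $Z\cup X_x\cup\{x\}$, and $Z$ is small. So two vertices of $D_1$ lying in different components of $G_y-X_y$ would force too many red edges at some vertex, giving a red $K_{1,m}$. It may be cleaner to choose $x$ at the outset to maximise the size of the largest component, and derive the contradiction from that extremal choice.

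I expect this last step, excluding the possibility that every neighbourhood fragments into medium-sized $2$-connected pieces when $n=\Theta(m)$, to be the main obstacle. The degree bookkeeping must be tight there, because the room available is only $\ep m$.
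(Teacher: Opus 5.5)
Your preliminary steps are fine: applying Lemma~\ref{lem:2con} with $k=\ceiling{4/\ep}$ is valid, and re-centring at a vertex $y$ of a small component $D_1$ is exactly the paper's move. But the proposal stops at the one step that carries the content of the claim, and you flag it yourself as unresolved. The sentence ``two vertices of $D_1$ lying in different components of $G_y-X_y$ would force too many red edges at some vertex'' is asserted, not derived. Even if it were proved, it would not suffice. By your own hypothesis $|N(y)\cap D_1|\le|D_1|<2n+C$, so knowing that $N(y)\cap D_1$ sits inside one piece of $G_y-X_y$ does not give a piece of order $2n+C$. You must also attach the vertices of $N(y)\setminus D_1$ (mostly in $Z$), and nothing in the proposal addresses them. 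So as written the claim is not proved when $n=\Theta(m)$.

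What closes the gap is an explicit common-neighbourhood count. Put $Y':=N(x)\setminus(D_1\cup X_x)$.
\begin{itemize}
\item Not only $y$ but every $u\in D_1$ has $N(u)\cap Y'=\emptyset$. The hypothesis $|D_1|<2n+C$ gives $|Y'|\ge d(x)-(2n+C)-k\ge m-n+\ep m-C-k$.
\item For $u_1,u_2\in N(y)\cap D_1$, all of $N(u_1),N(u_2),N(y)$ lie in $V\setminus Y'$. Hence $|N(u_1)\cap N(u_2)\cap N(y)|\ge 3\delta(G)-2N+2|Y'|\ge (n-m+\ep m)+2|Y'|\ge 2\ep m$.
\item For $u\in N(y)\cap D_1$ and any $w\in N(y)\setminus D_1$, only $N(u)$ and $N(y)$ avoid $Y'$. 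This gives $|N(u)\cap N(w)\cap N(y)|\ge 3\delta(G)-2N+|Y'|\ge 2\ep m-C-k\ge \ep m$.
\end{itemize}
So any two vertices of $N(y)$, at least one of them in $D_1$, have at least two common neighbours in $G[N(y)]$. Deleting a single vertex therefore cannot disconnect $G[N(y)]$: route any remaining pair through a surviving vertex of $N(y)\cap D_1$. Thus $G[N(y)]$ is itself $2$-connected, with $|N(y)|\ge (1+\ep)m+n\ge 2n+C$ and minimum degree at least $n+\ep m$, and it witnesses the claim. This is exactly the paper's argument (its $v,A,X,Y,x$ are your $x,X_x,D_1,Y',y$). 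It makes unnecessary the second application of Lemma~\ref{lem:2con}, your easy/hard regime split, and any extremal choice of $x$.
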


\begin{proofclaim}
Let $v\in V(G)$ and set $G_v:=G[N(v)]$.  We will show that if $G_v$ does not satisfy the conditions of the claim, then there exists $x\in V(G_v)$ which does.  Since $\delta(G_v)\geq n+\ep m$, we can apply Lemma \ref{lem:2con} to get a set $A\subseteq N(v)$ with $|A|<\frac{3}{\ep}$ such that $G_v':=G_v-A$ only contains 2-connected components.  Furthermore, $\delta(G_v')\geq n+\ep m-|A|\geq n+\ep m/2\geq n+\frac{C}{2}$.  If $G_v'$ contains a component of order at least $2n+C$, then we are done.  So suppose that every component of $G_v'$ has order less than $2n+C$.  Let $X$ be a component of $G_v'$, let $Y=V(G_v')\setminus X$, and let $Z=V(G)\setminus (X\cup Y)$ (note that $A\subseteq Z$).  Let $x\in X$, let $X':=N(x)\cap X$, and let $Z':=N(x)\cap Z$. Note that $N(x)\cap Y=\emptyset$.  Furthermore, note that $|X'|\geq 2\delta(G)-N-|A|=n+\ep m-|A|\geq n+\ep m/2$ and that $$|Y|\geq \delta(G)-(2n+C)-|A|\geq (1+\ep)m-n-C-|A|\geq m-n+\ep m/2.$$ 

Now we claim that $G_x:=G[N(x)]$ is 2-connected.  Indeed, 
% first note that for all $u\in X'$, 
% $$d_{G_x}(u)\geq d(u)+d(x)-(N-|Y|)\geq n+\ep m+m-n+\ep m/2\geq m+\ep m$$ and for all $w\in Z'$, $d_{G_x}(w)\geq d(w)+d(x)-N\geq n+\ep m$. So we have that 
for all $u_1, u_2\in X'$ and $w\in Z'$ we have
\begin{align}
|N_{G_x}(u_1)\cap N_{G_x}(u_2)|&\geq d_{G_x}(u_1)+d_{G_x}(u_2)-d(x) \notag\\
&\geq d(u_1)+d(x)-(N-|Y|)+d(u_2)+d(x)-(N-|Y|)-d(x)\notag\\
&\geq 3\delta(G)-2N+2|Y|\notag \\
&\geq N-3m+2m-2n+\ep m\geq m-n+2\ep m\geq 2\ep m\label{eq:uu}
\end{align}
and 
\begin{align}
|N_{G_x}(u_1)\cap N_{G_x}(w)|&\geq d_{G_x}(u_1)+d_{G_x}(w)-d(x) \notag\\
&\geq d(u_1)+d(x)-(N-|Y|)+d(w)+d(x)-N-d(x)\notag\\
&\geq 3\delta(G)-2N+|Y|\geq N-3m+m-n+\ep m/2=3\ep m/2. \label{eq:uw}
\end{align}
Thus every pair of vertices in $V(G_x)$ with at least one vertex in $X'$ has at least $3\ep m/2\geq 2$ common neighbors in $G_x$, and for every pair of vertices $w_1, w_2\in Z'$, we can choose an arbitrary pair $u_1, u_2\in X'$ such that there exist distinct $y_1, y_1', y_2, y_2'$ with $y_i, y_i'\in N_{G_x}(w_i)\cap N_{G_x}(u_i)$ for all $i\in [2]$. This gives two internally disjoint paths from $w_1$ to $w_2$. Thus $G_x$ is 2-connected.
\end{proofclaim}

Let $x$ be a vertex as in the claim.  By Theorem \ref{thm:VZ}, we have that $G[N(x)]$ contains an even cycle of length at least $2n+C$.  Now by Theorem \ref{thm:GHS} we have a cycle of length $2n$ in $G[N(x)]$ which gives a copy of $W_{2n}$ in $G$.  
\end{proof}

\begin{remark}
When $n\leq m=O(n)$, the proof of Theorem \ref{thm:starWheel} can be modified to give a constant error term.  That is, we can prove $R(K_{1,m}, W_{2n})\leq 2m+n+O(1)$ by noting that while \eqref{eq:delGv} only gives $\delta(G_v)\geq n+O(1)$, we are assuming that $m$ is linear in $n$ and thus we can still apply Lemma \ref{lem:2con}.  
\end{remark}

\subsubsection{Precise upper bound}

To prove the precise upper bound we need two more auxiliary results.  Say that a graph $G$ is \emph{weakly pancyclic} if the shortest cycle in $G$ has length $a$ (i.e.~the girth of $G$ is $a$) and the longest cycle has length $b$ (i.e.~the circumference of $G$ is $c$), then $G$ contains a cycle of length $b$ for all $a\leq b\leq c$.

\begin{theorem}[Brandt, Faudree, Goddard \cite{BFG}]\label{thm:BFG}
Let $G$ be a graph on $n\geq 3$ vertices.  If $G$ is 2-connected, non-bipartite, and $\delta(G)\geq \frac{n}{4}+250$, then $G$ is weakly pancyclic (with the possible exception of the 5-cycle).
\end{theorem}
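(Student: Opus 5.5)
Since Theorem~\ref{thm:BFG} is quoted from \cite{BFG}, what follows is how I would try to reprove it. The plan is to establish weak pancyclicity in two stages:
\begin{enumerate}
\item show that the girth $g$ is very small;
\item run a ``cycle-shortening'' induction downward from a longest cycle $C$ of length $c$.
\end{enumerate}

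\textbf{Stage 1: the girth is small.} With $\delta(G)\geq \frac{n}{4}+250$, any vertex $v$ together with $N(v)$ and the second neighbourhood forces short cycles. If $g\geq 6$, then the sets $N(u)\setminus\{v\}$ for $u\in N(v)$ are pairwise disjoint. This gives $n\geq 1+\delta+\delta(\delta-1)$, which is impossible for $n$ large. For small $n$ the additive $250$ handles it, since then $\delta$ is a large fraction of $n$. So $g\in\{3,4,5\}$.

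I would treat $g=5$ separately. A triangle-free, $C_4$-free graph with $\delta>n/4$ is very restricted: counting paths of length two shows it must be essentially a blow-up of $C_5$ with tiny parts. This is exactly where the excluded $5$-cycle case lives, and I would dispose of it by direct inspection.

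\textbf{Stage 2: downward induction on cycle length.} Let $C$ be a cycle of length $\ell$ with $g<\ell\leq c$. The key lemma to prove is that $G$ then contains a cycle of length $\ell-1$. I would apply a pigeonhole argument to chords and to outside vertices.
\begin{itemize}
\item If $\ell$ is small relative to $n$, say $\ell\leq n/2$, then some vertex of $C$ or outside $C$ has many neighbours on $C$. Two of these neighbours at cyclic distance $2$ yield a cycle of length $\ell$ through an outside vertex. Two chords in ``crossing at distance one'' position yield length $\ell-1$.
\item If $\ell$ is large, then $C$ has many chords, since each vertex of $C$ has at least $\delta-(n-\ell)$ neighbours on $C$. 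A Bondy-style counting argument (as in the proof of Bondy's pancyclicity theorem) then finds a pair of chords $x_ix_j$, $x_{i+1}x_{j+2}$ or similar, which produces a cycle of length $\ell-1$.
\end{itemize}
The $250$ slack is used to absorb the boundary cases where these counts are tight. 2-connectivity guarantees that outside components attach to $C$ at two points, so rerouting is possible.

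\textbf{Main obstacle.} The hard step is getting odd lengths, i.e.\ going from even $\ell$ to $\ell-1$, when $G$ is close to bipartite. In that case chords and outside neighbours mostly respect a bipartition, and the counting only yields length $\ell-2$. Here I would use non-bipartiteness together with a stability argument. Either $G$ is far from bipartite, in which case odd chords are plentiful and the counting goes through. Or $G$ is $o(n^2)$-close to a bipartite graph, with a small set of ``bad'' edges inside the parts. In the second case I would explicitly build cycles of every odd length from $g$ up to $c$. To do this, take a bad edge $e$ (which, together with a short path, gives an odd cycle), then splice $e$ into long even cycles in the near-complete bipartite part. The high minimum degree makes this part Hamiltonian-connected between its sides, so one can extend by $2$ at each step. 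This stability step is where I expect most of the technical work to be.
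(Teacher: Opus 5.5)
The paper does not prove this statement. It is quoted from \cite{BFG} and used only to produce cycles of even length ($2n$ and $2m$), where the exceptional clause is harmless. Judged on its own, your outline has a genuine gap. It puts the exception in the wrong place, and as a result the key lemma of Stage 2 is false.

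\textbf{Girth $5$ cannot occur.} Already for $g\ge 5$ (not only $g\ge 6$), the sets $N(u)\setminus\{v\}$ for $u\in N(v)$ are pairwise disjoint and disjoint from $N(v)$. This gives $n\ge 1+\delta^2$, which is incompatible with $\delta\ge\frac n4+250$. Separately, a blow-up of $C_5$ with parts of size at least $2$ contains $C_4$, so your description of the ``girth $5$'' case is not coherent. Hence $g\in\{3,4\}$, and the exceptional graphs have girth $4$.

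\textbf{The exception lives at girth $4$.} Take the balanced blow-up of $C_7$ on $n\ge 7000$ vertices with $7\mid n$. It is $2$-connected and non-bipartite, has $\delta=\frac{2n}{7}\ge\frac n4+250$, and contains $C_4$ and $C_6$. It has no $C_5$, since a $5$-cycle would map to a closed walk of length $5$ in $C_7$, and every odd closed walk in $C_7$ has length at least $7$. So your Stage 2 lemma, ``a cycle of length $\ell$ with $g<\ell\le c$ forces a cycle of length $\ell-1$,'' fails at $\ell=6$. Any argument claiming to prove it in that generality must break somewhere.

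\textbf{The stability dichotomy also fails.} The same example breaks your ``main obstacle'' paragraph. The $C_7$ blow-up is $\Theta(n^2)$-far from bipartite: every bipartition leaves at least $(n/7)^2$ edges inside a part. Yet it has no $5$-cycle, so ``far from bipartite $\Rightarrow$ odd chords are plentiful $\Rightarrow$ the counting goes through'' is not true. What governs the short odd cycles is the \emph{odd girth}, not the distance from bipartiteness.

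\textbf{What is missing.} By the Andr\'asfai--Erd\H{o}s--S\'os bound (odd girth at least $2k+1$ and $\delta>\frac{2n}{2k+1}$ force bipartiteness), $\delta>\frac n4$ leaves only odd girth $3$, $5$, or $7$. A correct proof has to build in this case distinction:
\begin{itemize}
\item odd girth $3$ or $5$ must yield every length from $g$ to $c$;
\item odd girth $7$ (where $g=4$) yields every length from $4$ to $c$ except $5$, which is what the exceptional clause refers to.
\end{itemize}

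\textbf{Stage 2 counting is also too loose.} The bound $\delta-(n-\ell)$ on neighbours along $C$ is nonpositive unless $\ell\gtrsim\frac{3n}{4}$. The circumference, however, is only guaranteed to be about $2\delta\approx\frac n2$, so the range $\frac n2<\ell\lesssim\frac{3n}{4}$ is covered by neither bullet. Also, an outside vertex with two neighbours at cyclic distance $2$ only reproduces length $\ell$; to shorten $C$ you need distance $3$ or crossing chords.
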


\begin{theorem}[Jackson \cite{J81}]\label{thm:jackson}
Let $k$ be an integer with $k\geq 2$.  Let $G$ be a bipartite graph with bipartition $\{X,Y\}$ such that every vertex in $Y$ has at least $k$ neighbors in $X$.  If $Y'\subseteq Y$ with $2\leq |Y'|\leq k$ and $k\leq |X|\leq 2k-2$, then $G$ has a cycle $C$ such that $V(C)\cap Y=Y'$.  In particular, $G$ has a cycle of length $2m$ for all $4\leq 2m\leq 2k$.  
\end{theorem}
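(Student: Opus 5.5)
We would prove the statement by induction on $t:=|Y'|$, first recording the basic counting fact that drives everything: any two vertices $y_1,y_2\in Y$ satisfy
$$|N(y_1)\cap N(y_2)|\geq 2k-|X|\geq 2,$$
since each has at least $k$ neighbors in $X$ and $|X|\leq 2k-2$. The base case $t=2$ follows at once: if $Y'=\{y_1,y_2\}$, pick two common neighbors $a,b\in X$, and $y_1ay_2by_1$ is a $4$-cycle meeting $Y$ exactly in $Y'$. The ``in particular'' clause then follows by taking any $Y'\subseteq Y$ with $|Y'|=m$ (for $2\leq m\leq k$, assuming $|Y|\geq m$), because a cycle in a bipartite graph meeting $Y$ in exactly $m$ vertices has length $2m$.

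For the inductive step, let $Y'=\{y_1,\dots,y_t\}$ with $t\leq k$ and $y:=y_t$. By induction, among all cycles $C$ with $V(C)\cap Y=Y'\setminus\{y\}$ we choose one extremal for an auxiliary parameter; our first choice is to maximize $|N(y)\cap V(C)|$. Write $C=y_1x_1y_2x_2\cdots y_{t-1}x_{t-1}y_1$, so $C$ uses exactly $t-1\leq k-1$ vertices of $X$, and let $X_0:=X\setminus V(C)$, so $|X_0|\leq 2k-2-(t-1)$. We try to insert $y$ in one of two ways.
\begin{enumerate}
\item[(a)] If $y$ is adjacent to $x_i$ and to some $x\in X_0\cap N(y_{i+1})$, we reroute $y_i x_i y\, x\, y_{i+1}$ (or symmetrically on the other side).
\item[(b)] If $y$ is adjacent to $x_{i-1}$ and $x_i$, and the neighbors $y_i$ of these can be rejoined through a vertex of $X_0$ common to $y_i$ and a neighbor, we perform the analogous swap.
\end{enumerate}
If neither move is possible, we count. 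Since $d(y)\geq k$ while $|X|\leq 2k-2$, $y$ sends at least $k-|X_0|\geq t-1-(k-2)+\dots$ edges into $V(C)\cap X$. Each $y_i$ likewise has at least $k-(t-1)$ neighbors in $X_0$ unless it is heavily attached to $C$. Failure of (a) forces $N(y_{i+1})\cap N(y)\cap X_0=\emptyset$ whenever $x_i\in N(y)$, and the common-neighbor bound $|N(y)\cap N(y_{i+1})|\geq 2$ then pushes these common neighbors onto $C$. We then use the maximality of $C$ to derive a contradiction via a Pósa-type rotation, exchanging a vertex of $X$ on $C$ for a neighbor of $y$ in $X_0$ to obtain a cycle through the same $Y$-vertices with more neighbors of $y$.

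The main obstacle is this final counting/exchange step: ensuring that the constraint $|X|\leq 2k-2$, together with $t\leq k$, really leaves enough room for one of the rerouting moves. This is where Jackson's original argument is delicate. If the simple maximality of $|N(y)\cap V(C)|$ proves insufficient, we would instead strengthen the induction hypothesis, for instance by requiring that the cycle can be chosen to contain a prescribed edge or a prescribed vertex of $X$, or that it avoids a prescribed small set. That gives more flexibility when splicing in $y$, at the cost of a more involved base case, which again follows from the common-neighbor bound.
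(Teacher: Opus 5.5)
The paper does not prove this statement. It quotes it from Jackson \cite{J81} and uses it as a black box, so your argument has to stand on its own.

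Several parts are fine: the common-neighbor bound $|N(y_1)\cap N(y_2)|\geq 2k-|X|\geq 2$, the base case $|Y'|=2$, and the derivation of the ``in particular'' clause (which does tacitly need $|Y|\geq k$). The gap is the inductive step, which is essentially the whole content of Jackson's theorem, and you leave it open. Moreover, the mechanism you sketch cannot work as stated.
\begin{itemize}
\item Your own count gives only $|N(y)\cap V(C)|\geq k-|X_0|\geq t+1-k\leq 1$. So $y$ may have no neighbor on $C$ at all, and then both moves (a) and (b) are unavailable.
\item The ``exchange'' of a vertex $x_i\in V(C)$ for a neighbor of $y$ in $X_0$ needs a vertex of $X_0$ adjacent to $y_i$, $y_{i+1}$ and $y$. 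Counting only guarantees $|N(y_i)\cap N(y_{i+1})\cap N(y)|\geq 3k-2|X|\geq 4-k$, which is vacuous for $k\geq 4$.
\end{itemize}

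Here is a concrete instance where your scheme fails. Let $k=4$ and $X=\{s_1,s_2,s_3,s_4,\tau_1,\tau_2\}$, so $|X|=2k-2$. Let $N(y)=\{s_1,s_2,s_3,s_4\}$, $N(y_1)=\{\tau_1,\tau_2,s_1,s_2\}$, $N(y_2)=\{\tau_1,\tau_2,s_3,s_4\}$, and $Y'=\{y_1,y_2,y\}$.
\begin{itemize}
\item The only cycle meeting $Y$ in exactly $\{y_1,y_2\}$ is $y_1\tau_1y_2\tau_2y_1$, and it contains no neighbor of $y$.
\item So it is extremal for your parameter, neither (a) nor (b) applies, and no exchange can increase $|N(y)\cap V(C)|$.
\item Yet there is no contradiction: $y_1\tau_1y_2s_3ys_1y_1$ is the required cycle.
\item The same thing happens whichever vertex of $Y'$ you remove. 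The unique cycles through $\{y_2,y\}$ and through $\{y_1,y\}$ avoid $N(y_1)$ and $N(y_2)$ respectively.
\end{itemize}

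So the step ``if neither move is possible, derive a contradiction'' is false. At the very least you need a detour move that replaces $x_i$ by a path $x\,y\,x'$ with $x,x'\in X_0$. In general you need a genuinely global extremal argument, for instance when the common neighbors of $y$ with the $y_i$ lie on $C$ or coincide; that is what Jackson's proof supplies. Your fallback of strengthening the induction hypothesis is only a suggestion. Until the strengthened statement is formulated and its inductive step is proved, the proposal does not establish the theorem.
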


We now show that $R(K_{1,m}, W_{2n})= 2m+n-\theta$ where $\theta=1$ if $m$ and $n$ are both even and $\theta=0$ otherwise.

\begin{proof}[Proof of Theorem \ref{thm:starWheelExact}]
Let $m$ and $n$ be integers with $n+1\leq m\leq 3n-1000$.  Set $N=2m+n-\theta$ where $\theta=1$ if $m$ and $n$ are both even and $\theta=0$ otherwise.  Consider a 2-coloring of $K_{N}$.  We may suppose that there is no red copy of $K_{1,m}$; i.e. every vertex has red degree at most $m-1$. Furthermore, we may assume that the red graph is maximal with respect to this property; that is, for every pair of distinct vertices $u,v$, either $u$ has red degree $m-1$ or $v$ has red degree $m-1$. Let $G$ be the graph consisting only of blue edges.  For all $u\in V(G)$, let $G_u:=G[N(u)]$.  Note that
\begin{equation}\label{eq:del'}
\delta(G)\geq (N-1)-(m-1)=m+n-\theta
\end{equation}
and 
\begin{equation}\label{eq:G_u}
\delta(G_u)\geq d(u)-m\geq n-\theta.
\end{equation}
The maximality of the red graph implies that 
\begin{equation}\label{eq:minim}
\text{for all distinct $u,v\in V(G)$, $d(u)=m+n-\theta$ or $d(v)=m+n-\theta$}.
\end{equation}

Let $v\in V(G)$ such that $d(v)=\Delta(G)$.  Note that if $\theta=1$, then since $N=2m+n-1$ is odd and $m+n-1$ is odd, we must have 
\begin{equation}\label{eq:Del}
\Delta(G)\geq m+n
\end{equation}
and thus 
\begin{equation}\label{eq:G_v'}
\delta(G_v)\geq \Delta(G)-m\geq n.
\end{equation}
Furthermore we have 
\begin{equation}\label{eq:+250'}
\delta(G_v)\geq d(v)-m\geq \frac{1}{4}d(v)+250,
\end{equation}
where the last inequality holds since $d(v)=\Delta(G)\geq m+n$ and $m\leq 3n-1000$.

First suppose that $G_v$ is bipartite.  Let $\{X, Y\}$ be the bipartition of $N(v)$ with $|X|\leq |Y|$.  We have $\delta(G_v)\leq |X|\leq \frac{d(v)}{2}$.  Let $Y'\subseteq Y$ with $|Y'|=\delta(G_v)$.  Note that every vertex in $Y'$ has at least $\delta(G_v)$ neighbors in $X$ and $$\delta(G_v)\leq |X|\leq \frac{d(v)}{2}\leq 2(d(v)-m-1)\stackrel{\eqref{eq:G_v'}}{\leq} 2\delta(G_v)-2$$ (where the second to last inequality holds since $m\leq 3n-4$). Since $2n\leq 2|Y|=2\delta(G_v)$, we may apply Theorem \ref{thm:jackson} to get a cycle of length $2n$ in $G_v[X,Y']\subseteq G_v$ which gives us a copy of $W_{2n}$ with $v$ as the center.  So we may suppose that $G_v$ is not bipartite.

Next suppose that $G_v$ is 2-connected.  By \eqref{eq:G_v'} and \eqref{eq:+250'} and the fact that $G_v$ is not bipartite, we may apply Theorem \ref{thm:dir} and Theorem \ref{thm:BFG} to get a cycle of length $2n$ in $G_v$ which gives us a copy of $W_{2n}$ with $v$ as the center.  So we may suppose that $G_v$ is not 2-connected.

We either have that $G_v$ is not connected in which case we let $X$ be the vertex set of a smallest component of $G_v$ and we set $W=\emptyset$, or $G_v$ is separable in which case we let $X$ be the vertex set of a smallest leaf-block of $G_v$ and we let $W=\{w\}$ where $w$ is the cut vertex of $G_v$ belonging to $X$.  Let $Y=V(G_v)\setminus X$ and let $Z=V(G)\setminus (X\cup Y)$.  Note that \begin{equation}\label{eq:|Y|'}
|Y|\geq \frac{\Delta(G)-|W|}{2}\geq \frac{\Delta(G)-1}{2}\geq \frac{m+n-1}{2}.  
\end{equation}

Let $x\in X\setminus W$ and if possible choose $x$ so that $|N(x)\cap (X\setminus W)|\geq n$.  Set $X'=N(x)\cap (X\setminus W)$.   We will show that either $G_x$ contains a cycle of length $2n$, or there exists $y\in Y$ such that $G_y$ contains a cycle of length $2n$.  Note that by \eqref{eq:minim}, we have 
\begin{equation}\label{eq:umin}
\text{for all $u\in N(v)=V(G_v)$, $d(u)=m+n-\theta$.}
\end{equation}

\tbf{Case 1} ($|X'|\geq n$).  Note that $N(x)\cap Y=\emptyset$ and for all $u\in X'$, $N(u)\cap Y=\emptyset$.  So for all $u\in X'$, we have 
\begin{equation}\label{eq:ux'}
d_{G_x}(u)=|N(u)\cap N(x)|\geq d(u)+d(x)-(N-|Y|)\stackrel{\eqref{eq:umin}}{=} n-\theta+|Y|\stackrel{\eqref{eq:|Y|'}}{\geq} n-\theta+\frac{\Delta(G)-1}{2},
\end{equation}
and thus for all $u_1, u_2\in X'$ we have 
\begin{align}
|N_{G_x}(u_1)\cap N_{G_x}(u_2)|&=|N(u_1)\cap N(u_2)\cap N(x)|\notag\\
&\stackrel{\eqref{eq:ux'}}{\geq} 2(n-\theta+\frac{\Delta(G)-1}{2})-d(x)\notag\\
&\stackrel{\eqref{eq:umin},\eqref{eq:Del}}{\geq} 2n-2\theta+m+n-1-(m+n-\theta)= 2n-1-\theta \label{eq:uu'}.
\end{align}
Let $u_1, \dots, u_n$ be distinct vertices in $X'$.  Now we choose distinct vertices $v_1, \dots, v_n\in V(G_x)\setminus \{u_1, \dots, u_n\}$ such that for all $i\in [n-1]$, $v_i\in N_{G_x}(u_i)\cap N_{G_x}(u_{i+1})$ and $v_n\in N_{G_x}(u_1)\cap N_{G_x}(u_{n})$.  Note that on the last step when we choose $v_n$, we have $|N_{G_x}(u_1)\cap N_{G_x}(u_{n})|\geq 2n-1-\theta\geq 2n-2$ and $|\{u_2, \dots, u_{n-1}, v_1, \dots, v_{n-1}\}|=2n-3$, so there exists a choice for $v_n$.  Thus $u_1v_1\dots u_nv_n$ is a cycle of length $2n$ in $G_x$ and we are done.  

\tbf{Case 2} ($|X'|=n-1$). By the choice of $x$ (consequently $X'$) and \eqref{eq:G_v'}, this implies that $|W|=1$ (i.e. $G_v$ is connected and $X$ is a leaf block), $w$ is adjacent to every vertex in $X\setminus \{w\}$, and every vertex in $X\setminus \{w\}$ has degree exactly $n-1$ in $V(G_v)\setminus\{w\}$ (and degree exactly $n$ in $V(G_v)$).  This in turn implies that $|X\cup Y|=d(v)=\Delta(G)=m+n$, $|Z|=m-\theta$, and every vertex in $X\setminus \{w\}$ is adjacent to every vertex in $Z$ (since $|N(x)\cap Z|=m+n-\theta-n=m-\theta= |Z|$).  So if $w$ has a neighbor $v_1\in I:=Z\setminus \{v\}$ we would be done because $G[X\setminus\{w\}, Z]$ induces a complete bipartite graph, $x$ has $n$ neighbors in $X$, and $w$ has 2 neighbors in $Z$.  Indeed, letting $u_1, \dots, u_{n-1}$ be the $n-1$ vertices in $X'$ and $v_2, \dots, v_{n-1}$ be an additional $n-2$ vertices in $I$, we have the cycle $vwv_1u_1v_2u_2\dots v_{n-1}u_{n-1}v$ of length $2n$ in $G_x$ and we are done.  So we have $N(w)\cap I=\emptyset$ (that is, $N(w)\cap Z=\{v\}$) and we are left with two cases.  

First suppose there exists $v_1v_2\in E(G[I])$.  Let $v_3,v_4,\dots,v_{n-1}$ be an additional
$n-3$ vertices in $I$, and let
$u_1,u_2,\dots,u_{n-1}$ be the $n-1$ vertices in
$X'$. Thus we have a cycle $vwu_1v_1v_2u_2v_3u_3\dots v_{n-1}u_{n-1}v$ of length $2n$ in $G_x$ and we are done.

So we finally suppose that there are no edges in $G[I]$.  So for all $z\in I$, $z$ has no neighbors in $Z\cup \{w\}$, thus $$m+n-\theta\leq |N(z)\cap (V(G_v)\setminus \{w\})|\leq N-(|Z|+1)=m+n-1=|V(G_v)\setminus \{w\}|$$ and thus $\theta=1$ and $G[Z,V(G_v)\setminus \{w\}]$ induces a complete bipartite graph.  By \eqref{eq:umin}, we have $d_{G_v}(w)=m+n-\theta-1=m+n-2=|V(G_v)|-2$ (since $w$ is adjacent to $v$) and thus there is a vertex $y\in Y$
such that $yw\notin E(G)$. By \eqref{eq:G_v'}, let
$u_1,u_2,\dots,u_{n}$ be $n$ vertices in $N_{G_v}(y)$ and
$v_1,v_2,\dots,v_{n}$ be $n$ vertices in $Z$.  Thus 
$v_1u_1v_2u_2\dots v_{n}u_{n}v_1$ is a cycle of length $2n$ in $G_y$ and we are done.
\end{proof}

\section{Even cycles versus even wheels}\label{sec:cycleWheel}

In this section we asymptotically determine $R(C_{2m}, W_{2n})$ for all sufficiently large $m$ and $n$.

\subsection{Lower bounds}\label{sec:3low}

\begin{example}
For all integers $m$ and $n$ with $m\geq n\geq 2$, we have 
\[
R(C_{2m}, W_{2n})\geq R(C_{2m}, F_n)\geq 4m-1.
\]
\end{example}

\begin{proof}
Take two disjoint red complete graphs of order $2m-1$ and put all blue edges between them.  There is clearly no red copy of $C_{2m}$ and since $\chi(W_{2n})=3=\chi(F_n)$ there is no blue copy of $F_n$ or $W_{2n}$.  
\end{proof}

In the case of $\frac{n}{2}\leq m< n$, we give a slight improvement of the lower bound which appears in \cite{YL}. 

\begin{example}
For all integers $m$ and $n$ with $1\leq \frac{n}{2}\leq m< n$, we have
\[
R(C_{2m}, W_{2n})\geq R(C_{2m}, F_n)\geq 2m+2n-2.
\]
\end{example}

\begin{proof}
Take disjoint red complete graphs of sizes $2m-1$, $n-1$, and $n-1$ and put all blue edges between them.  Since $n-1\leq 2m-1$, there is no red $C_{2m}$.  Since the blue graph is tripartite and every vertex has blue degree at most $n-1$ to one of the other parts, there is no blue matching of size $n$ in the blue neighborhood of any vertex and thus no blue $F_{n}$ (and consequently no blue $W_{2n}$).
\end{proof}

When $2\leq m<\frac{n}{2}$, our lower bound is essentially the same as the lower bound on $R(C_{2m}, K_{1,2n})$ from \cite{ALPZ}. 

\begin{example}
Let $m$ and $n$ be integers with $2\leq m< \frac{n}{2}$ and define $q$ to be the unique integer such that $\frac{2n-1}{q}< 2m-1\leq \frac{2n-1}{q-1}$.  Furthermore, if $\frac{2n-1}{q}< 2m-1\leq \frac{(q+1)(2n-1)}{q^2}$, let $r$ be the unique integer such that $1\leq r\leq q$ and $2n-1 \equiv r \bmod q$.

We have $$R(C_{2m}, W_{2n})\geq R(C_{2m}, K_{1,2n})\geq  \begin{cases} 
     q(2m-1)+1  & \frac{(q+1)(2n-1)}{q^2}<2m-1\leq \frac{2n-1}{q-1} \\
     (q+1)\ceiling{\frac{2n-1}{q}}-(q-r)  & \frac{2n-1}{q}< 2m-1\leq \frac{(q+1)(2n-1)}{q^2}
    \end{cases}.$$    
\end{example}

\begin{proof}
First suppose $\frac{(q+1)(2n-1)}{q^2}<2m-1\leq \frac{2n-1}{q-1}$.  Take $q$ many disjoint red complete graphs, each of size $2m-1$ and put blue edges between them.  There is clearly no red $C_{2m}$ and every vertex has blue degree at most $(q-1)(2m-1)\leq 2n-1$, so there is no blue $K_{1,2n}$.  

Next suppose $\frac{2n-1}{q}< 2m-1\leq \frac{(q+1)(2n-1)}{q^2}$.  
Let $r$ and $\ell$ be the unique integers such that $2n-1=q\ell+r$ where $1\leq r\leq q$.  Take $q+1$ disjoint red complete graphs, where $q-r+1$ of them have $\ceiling{\frac{2n-1}{q}}-1$ vertices and $r$ of them have $\ceiling{\frac{2n-1}{q}}$ vertices and add one vertex which is adjacent in red to every vertex in the sets of size $\ceiling{\frac{2n-1}{q}}-1$.  All other edges are blue.   So there are exactly $r\ceiling{\frac{2n-1}{q}}+(q-r+1)(\ceiling{\frac{2n-1}{q}}-1)+1=(q+1)\ceiling{\frac{2n-1}{q}}-(q-r)$ vertices in the graph.  Since $\frac{2n-1}{q}< 2m-1$, we have that $\ceiling{\frac{2n-1}{q}}\leq 2m-1$, so there is no red copy of $C_{2m}$.  Furthermore, every vertex has blue degree at most $$(q-r)(\ceiling{\frac{2n-1}{q}}-1)+r\ceiling{\frac{2n-1}{q}}=q\ceiling{\frac{2n-1}{q}}-(q-r)=q\ell+r=2n-1,$$ so there is no blue $K_{1,2n}$.   
\end{proof}

\subsection{Upper bounds}\label{sec:3up}

In order to prove Theorem \ref{thm:CycleWheel} and Theorem \ref{thm:m>n/2}, we need a number of results mentioned in Section \ref{sec:2up} as well as a few other auxiliary results.

\begin{theorem}[Faudree, Schelp \cite{FS}; Rosta \cite{Ros}]\label{thm:evenC}
For all integers $m,n\geq 2$ with $(m,n)\neq (2,2)$, we have
$$R(C_{2m}, C_{2n})=
 \begin{cases} 
     2m+n-1  & m\geq n \\
     m+2n-1  & m<n
    \end{cases}.
$$
\end{theorem}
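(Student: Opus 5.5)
The lower bound is a construction and the upper bound an induction on the shorter cycle. By symmetry it suffices to treat $m\geq n$ (the case $m<n$ is the same argument with the colors swapped), except for the small exceptional cases we set aside. The theorem is classical and could simply be cited, but here is how I would prove it.

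\textbf{Lower bound.} Take a red clique $A$ on $2m-1$ vertices and a set $S$ of $n-1$ further vertices. Color all edges inside $S$ red and all edges between $A$ and $S$ blue.
\begin{itemize}
\item Every red component has at most $2m-1$ vertices (recall $n-1\le 2m-1$), so there is no red $C_{2m}$.
\item The blue graph is bipartite with one side $S$ of size $n-1$, so every blue cycle has length at most $2n-2$.
\end{itemize}
This gives $2m+n-2$ vertices with neither cycle.

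\textbf{Upper bound, overall strategy.} Set $N=2m+n-1$ and suppose a $2$-coloring of $K_N$ has no red $C_{2m}$. We induct on $n$ to produce a blue $C_{2n}$. The base case $n=2$ is the bound $R(C_{2m},C_4)\le 2m+1$ for $m\ge 3$. I would prove it directly: if there is no blue $C_4$, every two vertices have at most one common blue neighbor. The red graph is then very dense, with minimum degree about $N-\sqrt N$. Dirac-type results (Theorem \ref{thm:VZ}, together with a direct argument for small $N$) then give a red even cycle of every required length.

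\textbf{Inductive step.} Since $N-1\ge R(C_{2m},C_{2n-2})$, the induction hypothesis gives a blue cycle $C=c_1c_2\dots c_{2n-2}$. Let $U$ be the set of the $2m-n+1\ge m+1$ vertices outside $C$. We try to extend $C$ by $2$ in one of two ways:
\begin{itemize}
\item replace a cycle edge $c_ic_{i+1}$ by a blue path $c_i\,u\,u'\,c_{i+1}$ with $u,u'\in U$;
\item replace a segment $c_ic_{i+1}c_{i+2}$ by a blue path $c_i\,u\,c_{i+1}\,u'\,c_{i+2}$.
\end{itemize}
If neither extension exists, the blue edges between $C$ and $U$ are strongly constrained. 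The second detour rules out two distinct vertices of $U$ seeing $c_i,c_{i+1}$ and $c_{i+1},c_{i+2}$ respectively. Consequently, in blue each $u\in U$ sees essentially no two consecutive vertices of $C$, and pairs of vertices of $U$ adjacent in blue have disjoint blue "attachment windows" on $C$. A counting argument should then show that the red bipartite graph between $U$ and $C$ (possibly together with red edges inside $U$) is dense. In that case Theorem \ref{thm:jackson}, applied with $X\subseteq V(C)$ or $X\subseteq U$ of the appropriate size, yields a red cycle of length exactly $2m$, a contradiction.

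\textbf{Main obstacle.} The hard part is this counting step when $m$ is close to $n$. Then $U$ has only about $m$ vertices and $C$ about $2m$, so the red bipartite graph must have minimum degree at least $m$ on the $U$ side while $|X|\le 2m-2$, exactly the hypotheses of Theorem \ref{thm:jackson}. This is tight. If it fails, I would first check whether the failure forces a near-extremal structure resembling the lower-bound construction, namely a red $K_{2m-1}$ plus a set of $n-1$ vertices. In that structure a direct inspection either finds the red $C_{2m}$ or uses the one extra vertex (we have $N=2m+n-1$ rather than $2m+n-2$) to close a blue $C_{2n}$. The small cases, including the excluded $(2,2)$, would be checked by hand.
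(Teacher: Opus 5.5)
The paper does not prove this theorem; it is quoted from Faudree--Schelp and Rosta, so your proposal has to stand on its own. Your lower-bound construction is correct. The upper bound, however, is a strategy whose decisive step is missing, and the endgame you describe cannot work in general.

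In the inductive step you plan to get the red $C_{2m}$ from Theorem~\ref{thm:jackson}, applied to the red bipartite graph between $U$ and $V(C)$. A cycle in a bipartite graph has equally many vertices on each side, so any such cycle has length at most $2|V(C)|=4n-4$. For $m\geq 2n-1$ (say $n=3$ and $m$ large) it can therefore never be a $C_{2m}$. In that regime the red $C_{2m}$ must have at least $2m-4n+4$ of its edges inside $U$. Your two forbidden detours say nothing about blue edges inside $U$ between vertices with no blue neighbours on $C$, so the non-extendability of $C$ gives you no leverage there.

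For $m$ near $n$, the ``counting argument'' that should make the red bipartite graph dense is never supplied. You note yourself that the required bound is tight, and you only describe what you would check.

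The structural claim feeding that count is also unjustified. A single $u\in U$ blue to both $c_i$ and $c_{i+1}$ creates neither detour. For instance, take all of $U$ blue to $c_1,c_2$, $U$ a red clique, and all other $U$--$V(C)$ edges red. Neither extension exists, yet every vertex of $U$ sees two consecutive vertices of $C$. Turning ``this particular blue $C_{2n-2}$ does not extend by two'' into a red $C_{2m}$ is the real content of the theorem, and the proposal does not do it.

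The base case is also argued incorrectly. A blue $C_4$-free graph can contain a vertex of blue degree $N-1$ (a blue star or friendship graph). So the red graph need not have minimum degree anywhere near $N-\sqrt N$: the K\H{o}v\'ari--S\'os--Tur\'an bound controls the number of blue edges, not the maximum blue degree. A vertex $v$ of large blue degree must be treated separately. Its blue neighbourhood spans at most a blue matching, since blue edges $xy,xz$ there would give the blue $C_4$ $vyxzv$. In general, $R(C_{2m},C_4)\leq 2m+1$ needs a genuine argument rather than the density claim you make.
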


\begin{theorem}[Bondy \cite{Bon}]\label{thm:bon}
If $G$ is a graph on $n\geq 3$ vertices with $\delta(G)\geq \frac{n}{2}$, then either $G$ is pancyclic or $G$ is isomorphic to $K_{n/2, n/2}$.
\end{theorem}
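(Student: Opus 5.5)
Bondy's theorem is classical and is only cited here, but this is the argument I would give. First reduce to the Hamiltonian case. Since $\delta(G)\geq \frac{n}{2}$, any two nonadjacent vertices have a common neighbor, so $G$ is connected. Removing one vertex leaves every other vertex with at least $\frac{n}{2}-1$ neighbors among the remaining $n-1$, which already forces connectivity, so $G$ is 2-connected. Theorem~\ref{thm:dir} (taking $k=\ceiling{n/2}$ when $n\geq 4$) then gives a cycle of length at least $n$, so $G$ is Hamiltonian. Fix a Hamiltonian cycle $v_1v_2\dots v_nv_1$, with indices taken mod $n$. It remains to produce a cycle of every length $k$ with $3\leq k\leq n-1$, unless $G\cong K_{n/2,n/2}$.

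Fix such a $k$ and suppose $G$ has no $C_k$. The key step is a chord-pairing count. Take a chord $v_iv_j$. The chord $v_{i+1}v_{j+k-?}$, suitably shifted along the cycle, together with the arc between their endpoints and the edge $v_iv_{i+1}$, would close a cycle of length exactly $k$; so the existence of one chord forbids a specific ``partner'' chord at a neighboring vertex. The concrete choice I would make is to pair each potential neighbor $v_j$ of $v_i$ with the potential neighbor $v_{j+k-2}$ of $v_{i+1}$ and forbid both being present, after checking the index bookkeeping carefully. For each $i$ this gives an injection from $N(v_i)$ into the complement of $N(v_{i+1})$, up to $O(1)$ boundary terms coming from the cycle edges themselves. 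Hence $d(v_i)+d(v_{i+1})\leq n$, with the boundary terms handled separately.

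Summing over $i$ gives $2e(G)\leq n^2/2$. Combined with $\delta(G)\geq n/2$, equality must hold throughout, so $G$ is $\frac{n}{2}$-regular, $n$ is even, and every injection above is a bijection. The remaining step is to use this tightness: for every $i$, the neighborhood of $v_{i+1}$ is exactly the complement of the shifted neighborhood of $v_i$. I would show this forces $N(v_i)$ to be exactly the vertices of parity opposite to $i$ along the cycle, so that $G\cong K_{n/2,n/2}$. One also has to check that this happens only when $k$ is odd, since $K_{n/2,n/2}$ does contain every even cycle length; that consistency check rules out the extremal case for even $k$.

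I expect the main obstacle to be getting the pairing exactly right: choosing the shift so that each forbidden pair genuinely yields a $k$-cycle, and correctly accounting for the degenerate indices where the partner chord coincides with a cycle edge or with $v_i,v_{i+1}$ themselves. If a single pairing leaves too much slack, I would fall back on Bondy's original route. That route proves the stronger statement that a Hamiltonian graph with $e(G)\geq n^2/4$ is pancyclic or $K_{n/2,n/2}$, by induction: from a $C_{\ell}$ with many edges, find a $C_{\ell-1}$ whose vertex set still spans at least $(\ell-1)^2/4$ edges, and repeat down to triangles.
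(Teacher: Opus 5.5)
Your reduction to the Hamiltonian case and your overall plan are reasonable: bound $d(v_i)+d(v_{i+1})$ for consecutive vertices of a Hamiltonian cycle, then analyse equality. However, both substantive steps are missing. First, the pairing as written is wrong, and the boundary effects are not $O(1)$. Two chords $v_iv_j$ and $v_{i+1}v_{j'}$, together with the edge $v_iv_{i+1}$ and the arc from $v_j$ to $v_{j'}$, form a cycle on $|j-j'|+3$ vertices. So the shift must be $k-3$; your shift $k-2$ yields a $(k+1)$-cycle. Write $v_{i+a}$, $2\le a\le n-1$, for the vertices other than $v_i,v_{i+1}$. The partner of $v_{i+a}\in N(v_i)$ in the nested configuration is $v_{i+a-(k-3)}$, which exists only for $a\ge k-1$. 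For the remaining $k-3$ positions you need a second configuration: the crossing chords $v_iv_{i+a}$ and $v_{i+1}v_{i+b}$ with $b-a=n+1-k$, which close the $k$-cycle $v_iv_{i+a}v_{i+a-1}\cdots v_{i+1}v_{i+b}v_{i+b+1}\cdots v_{i-1}v_i$. The two pieces glue into the rotation $a\mapsto a-(k-3)\pmod{n-2}$ of $\{2,\dots,n-1\}$. This exact bijection is what gives $d(v_i)+d(v_{i+1})\le n$. Since you compare with $\delta(G)\ge\frac n2$ with zero slack, any uncontrolled boundary term would kill the argument.

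Second, the equality case is only asserted, and it is where the real work lies. Tightness of these relations does not by itself force $K_{n/2,n/2}$. For $n=6$ and $k=4$, take the cycle $v_0\cdots v_5$ plus the triangle $v_0v_2v_4$. It satisfies $d(v_i)+d(v_{i+1})=n$ and every complementarity relation along the Hamiltonian cycle, yet it contains the $4$-cycle $v_0v_1v_2v_4$. So the extremal analysis must genuinely use regularity or further configurations, and you give no argument. Your fallback also fails as stated: in the critical $\frac n2$-regular case, every set of $n-1$ vertices spans exactly $\frac{n^2}{4}-\frac n2<\frac{(n-1)^2}{4}$ edges.

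A clean repair avoids general $k$ altogether. Suppose $G$ has a cycle $C'$ of length $n-1$ missing a vertex $w$. Then $d(w)\ge\frac n2>\frac{n-1}{2}$, so for every $1\le d\le n-2$ the set $N(w)$ meets its translate by $d$ along $C'$, giving a cycle of length $d+2$; hence $G$ is pancyclic. Otherwise $v_i\not\sim v_{i\pm 2}$, and for $2\le a\le n-3$ the chords $v_iv_{i+a}$ and $v_{i+1}v_{i+a+2}$ cannot coexist, since together they give a cycle missing only $v_{i+a+1}$. This gives $d(v_i)+d(v_{i+1})\le n$. Hence $G$ is $\frac n2$-regular, and exactly one of $v_iv_{i+a}$, $v_{i+1}v_{i+a+2}$ is an edge. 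Propagating from $v_x\not\sim v_{x+2}$ shows that $v_x\sim v_{x+c}$ if and only if $c$ is odd, i.e.\ $G\cong K_{n/2,n/2}$.

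Also, Theorem~\ref{thm:dir} requires $k\le\frac n2$, so for odd $n$ you should cite Dirac's Hamiltonicity theorem directly.
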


The following lemma appears in \cite{DW}, but as the proof is simple, we repeat it here.

\begin{lemma}\label{lem:partite_matching}
Let $G$ be a complete multipartite graph on $N$ vertices with parts $V_1, \dots, V_t$ such that $|V_1|\leq \dots \leq |V_t|$.
\begin{enumerate}
\item If $t\geq 3$ and $|V_t|\leq \frac{N}{2}$, then $G$ is pancyclic.
\item If $t\geq 3$ and $|V_t|>\frac{N}{2}$, then $G$ has a cycle of length $\ell$ for all $3\leq \ell\leq 2(N-|V_t|)$
\item If $t=2$, then $G$ has a cycle of length $2\ell$ for all $4\leq 2\ell\leq 2|V_1|$.
\end{enumerate}
\end{lemma}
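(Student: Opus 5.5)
Each part can be handled separately, using Bondy's theorem (Theorem \ref{thm:bon}) where possible and explicit cycles otherwise.

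\textbf{Part 3 ($t=2$).} Here $G=K_{|V_1|,|V_2|}$ with $|V_1|\le |V_2|$. For each $2\le \ell\le |V_1|$, pick $\ell$ vertices $a_1,\dots,a_\ell\in V_1$ and $\ell$ vertices $b_1,\dots,b_\ell\in V_2$. Then $a_1b_1a_2b_2\cdots a_\ell b_\ell a_1$ is a cycle of length $2\ell$, since all edges between the parts are present.

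\textbf{Part 1 ($t\ge 3$, $|V_t|\le N/2$).} Every vertex $v\in V_i$ has degree $N-|V_i|\ge N-|V_t|\ge N/2$. Since $N\ge 3$, Theorem \ref{thm:bon} shows that $G$ is pancyclic or $G\cong K_{N/2,N/2}$. The second option is impossible: $G$ has at least three nonempty parts, so it contains a triangle, while $K_{N/2,N/2}$ is bipartite. Hence $G$ is pancyclic.

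\textbf{Part 2 ($t\ge 3$, $|V_t|>N/2$).} Set $s:=N-|V_t|$, the number of vertices outside $V_t$; these lie in at least two parts. Fix $3\le \ell\le 2s$. Let $H$ be the complete multipartite graph induced on $V_1\cup\dots\cup V_{t-1}$ together with $s$ vertices of $V_t$. Then $H$ has $2s$ vertices, at least three parts, and largest part of size $s=|V(H)|/2$. By Part 1, $H$ is pancyclic, so it contains a cycle of length $\ell$ for every $3\le \ell\le 2s$. Since $H$ is a subgraph of $G$, so does $G$.

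There is no serious obstacle. The only point needing care is excluding $K_{N/2,N/2}$ in Part 1, which the triangle argument handles. In Part 2 one checks that $H$ is still complete multipartite with at least three parts, each of size at most $|V(H)|/2$; this holds because $|V_i|\le |V_t|$, the parts $V_1,\dots,V_{t-1}$ together contain exactly $s$ vertices, and the truncated copy of $V_t$ has size $s$.
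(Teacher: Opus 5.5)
Your proposal is correct and follows the paper's proof essentially step for step: Bondy's theorem for (i), with non-bipartiteness ruling out $K_{N/2,N/2}$; for (ii), truncating $V_t$ to $N-|V_t|$ vertices and applying (i); and for (iii), the complete bipartite structure directly. Your write-up is slightly more explicit than the paper's, in that you check that the truncated graph still has at least three parts and that its largest part has size exactly half its order.
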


\begin{proof}
(i) Suppose first that $t\geq 3$ and $|V_t|\leq \frac{N}{2}$.  In this case we have that $G$ is not bipartite and $\delta(G)\geq N-|V_t|\geq \frac{N}{2}$, so by Theorem \ref{thm:bon}, $G$ is pancyclic.

(ii) Let $V_t'\subseteq V_t$ with $|V_t'|=\sum_{i=1}^{t-1}|V_i|=N-|V_t|$ and then apply (i) to $G[V_1\cup \dots \cup V_{t-1}\cup V_t']$.

(iii) This follows directly since $G$ is a complete bipartite graph.
\end{proof}

\subsubsection{$m\geq n$}

We first prove an upper bound on $R(C_{2m}, W_{2n})$ in the case when $m\geq n$.  

\begin{proof}[Proof of Theorem \ref{thm:CycleWheel}]
Let $n$ and $m$ be integers such that $m\geq n\geq 2$ and set $c:=m-n$.  Let $N=\max\{4m+332-\floor{\frac{333}{251}c}, 4m-1\}$ and consider a 2-coloring of $K_N$.  We show that we can either find a red $C_{2m}$ or a blue $W_{2n}$.  
If $G_R$ is bipartite, then we have a blue clique $K$ on at least $\ceiling{N/2}$ vertices.  If $n=m$, then $\ceiling{N/2}>2m=2n$, and if $m>n$, then $\ceiling{N/2}\geq 2m>2n$; so either way, $K$ contains a blue $W_{2n}$.  So suppose that $G_R$ is not bipartite.

If there exists a vertex $v$ with $d_B(v)\geq 2m+n-1=3m-1-c$, then by Theorem \ref{thm:evenC}, we have a red $C_{2m}$ in $N_B(v)$ and we are done, or a blue $C_{2n}$ in $N_B(v)$ which gives a blue $W_{2n}$.  So we have 
\begin{equation}\label{eq:GR}
\delta(G_R)\geq N-1-(3m-2-c)\geq \frac{N}{4}+250>m,
\end{equation}
where the second inequality holds since $N\geq 4m+332-\floor{\frac{333}{251}c}\geq 4m+332-\frac{4c}{3}$.

If $G_R$ is 2-connected, then by \eqref{eq:GR} together with Theorem \ref{thm:dir} and Theorem \ref{thm:BFG} we have a red cycle of length exactly $2m$.  So suppose that $G_R$ is not 2-connected.  

If $G_R$ is separable, let $x$ be a vertex such that $G_R-x$ is disconnected and set $G_R':=G_R-x$. If $G_R$ is not connected, set $G_R':=G_R$.  Note that every component of $G_R'$ has order at least $\delta(G_R')+1\geq \delta(G_R)\geq \frac{N}{4}+250$, so in particular $G_R'$ has either two or three components.  If $G_R'$ has three components, then since all edges between these components are blue and by \eqref{eq:GR} all of the components have order greater than $m\geq n$, we have a blue copy of $W_{2n}$.  So suppose $G_R'$ has exactly two components $X_1$ and $X_2$ with $|X_1|\geq |X_2|> m\geq n$.  If there exists $i\in [2]$ and $v\in X_i$ such that $d_B(v, X_i)\geq n$, then we have a blue $W_{2n}$ (using the edges between $X_1$ and $X_2$).  So suppose 
\begin{equation}\label{eq:X1}
\text{for all $i\in [2]$, $\delta(G_R[X_i])\geq |X_i|-1-(n-1)=|X_i|-n$.}
\end{equation}

If $|X_1|\geq 2m$, then by \eqref{eq:X1} and the fact that $|X_1|\geq 2m\geq 2n$ we have $\delta(G_R[X_1])\geq |X_1|-n\geq \frac{|X_1|}{2}$ (with equality only if $|X_1|=2m=2n$). Thus by Theorem \ref{thm:bon}, $G'_R[X_1]$ is either pancyclic or isomorphic to $K_{m,m}$; in either case we get a red cycle of length $2m$.

Finally, suppose $|X_1|\leq 2m-1$.  Since $2m-1\geq |X_1|\geq \ceiling{\frac{|V(G_R')|}{2}}$, this implies that $|V(G_R')|=N-1=4m-2$ (which in turn implies $m\geq n+251$) and $|X_1|=2m-1=|X_2|$.  So we are in the case where $G_R$ is seperable and $x$ is a cut vertex of $G_R$.  If $x$ has blue degree at least $n$ to both $X_1$ and $X_2$, then we have a blue $W_{2n}$ as before using the blue edges between $X_1$ and $X_2$.  So suppose there exists $i\in [2]$ such that $x$ has blue degree at most $n-1$ to $X_i$ and thus red degree at least $2m-1-(n-1)=2m-n> m$ to $X_i$.  Now together with \eqref{eq:X1} we have a red graph $G_R[X_i\cup \{x\}]$ on $2m$ vertices with minimum degree greater than $m$, and thus by Theorem \ref{thm:bon}, $G_R[X_i\cup \{x\}]$ is pancyclic giving us a red cycle of length $2m$.
\end{proof}

\subsubsection{$\frac{n}{2}\leq m<n$}

Now we prove an upper bound on $R(C_{2m}, W_{2n})$ for sufficiently large $\frac{n}{2}\leq m<n$.

\begin{proof}[Proof of Theorem \ref{thm:m>n/2}]
Let $C':=75\cdot 10^4\cdot 6^5$ and $m_0:=45C'\cdot 6^4$ (the constants coming from Theorem \ref{thm:GHS} with $d=1/6$) and let $C:=3C'$.  Let $m$ and $n$ be positive integers with $m\geq m_0$ and $\frac{n}{2}\leq m<n$.  Let $N=2m+2n+C$ and consider an arbitrary 2-coloring of $K_N$.  We are looking for a red $C_{2m}$ or a blue $W_{2n}$.  If there is a vertex $v$ with blue degree at least $2n+m-1$, then by Theorem \ref{thm:evenC}, $N_B(v)$ either contains a red $C_{2m}$ and we are done, or a blue $C_{2n}$ which gives us a blue $W_{2n}$ with $v$ as the center.  So suppose that 
\begin{equation}\label{eq:mc1}
 \delta(G_R)\geq N-1-(2n+m-2)= 2m+2n+C-1-(2n+m-2)=m+C+1.
\end{equation}
By \eqref{eq:mc1} and the fact that $N=2m+2n+C\leq 6m+C$, we have $\delta(G_R)\geq m+C+1\geq \frac{1}{6}N+6$.  So by Lemma \ref{lem:2con} we can delete a set $X$ with $|X|\leq 4$ so that every component of $G_R':=G_R-X$ is 2-connected and has minimum degree at least $m+C+1-|X|\geq m+C-3\geq m+C'$.  

If some 2-connected component of $G_R'$ has at least $2m+C'$ vertices, then by Theorem \ref{thm:VZ} and Theorem \ref{thm:GHS} we can get a red cycle of length exactly $2m$.  So suppose that every 2-connected component of $G_R'$ has less than $2m+C'=2m+\frac{C}{3}$ vertices.  Since $2(2m+\frac{C}{3})+|X|<2m+2n+C$, it must be the case that $G_R'$ has at least three components.  Let $V_1,\dots, V_t$ be the vertex sets of such components and without loss of generality suppose $m+C'<|V_1|\leq |V_2|\leq |V_3|\leq \dots \leq |V_t|<2m+C'$.  

First suppose $t=3$.  If $n\leq |V_2|\leq |V_3|$, then since all edges between $V_i$ and $V_j$ are blue for $1\leq i<j\leq t$, we have a blue $W_{2n}$ with center in $V_1$. However, if $|V_1|\leq |V_2|<n$, then $|V_3|\geq 2n+2m+C-4-|V_1|-|V_2|>2m+C'$, a contradiction.

Finally suppose $t\geq 4$.  In this case we have $|V_2|+\dots+|V_{t-1}|>2m+2C'>|V_t|$ and $|V_2|+\dots+|V_t|\geq 2m+2n+C-4-|V_1|>2n$, so by Lemma \ref{lem:partite_matching} we have a blue copy of $W_{2n}$ with center in $V_1$ (since the blue complete multipartite graph with parts $V_2, \dots, V_t$ is pancyclic and has at least $2n$ vertices).
\end{proof}

\subsubsection{$2\leq m<\frac{n}{2}$}\label{sec:m<n/2}

In the case when $2\leq m<\frac{n}{2}$, the proof will be more complicated and we will need to make use of Szemer\'edi's regularity lemma as well as a version of the Edmonds-Gallai theorem for fractional matchings.  Given a graph $G$, we say that a set of edges $F\subseteq E(G)$ is a \emph{fractional matching} if every component of the subgraph induced by $F$ is an edge or an odd cycle (note that what we call a fractional matching is sometimes referred to in the literature as a \emph{2-matching}).

Given a graph $G$, disjoint subsets $A,B\subseteq V(G)$, and $\ep>0$, we define the density of $(X,Y)$ as $d(X,Y)=\frac{e(X,Y)}{|X||Y|}$ and we say that $(X,Y)$ is \emph{$\ep$-regular} if for all $X\subseteq A$ and $Y\subseteq B$ with $|X|\geq \ep |A|$ and $|Y|\geq \ep |B|$ we have $|d(X,Y)-d(A,B)|\leq \ep$.  

We will use Szemer\'edi's regularity lemma \cite{Sz} in the following form (see \cite[Exercise 2.1.23]{yZ} for example).

\begin{theorem}[Regularity lemma]\label{thm:reglem}
For all $\ep>0$ there exists $M$ such that for all $\ep\leq d\leq 1$ and all graphs $G$ on $n$ vertices there exists a partition $\{V_0, V_1, \dots, V_k\}$ of $V(G)$ such that 
\begin{enumerate}
\item $k\leq M$,
\item $|V_1|=\dots=|V_k|$ and $|V_0|\leq \ep n$,
\item for all $i\in [k]$, $|\{j\in [k]: (V_i, V_j) \text{ is not $\ep$-regular}\}|<\ep k$.
\end{enumerate}
\end{theorem}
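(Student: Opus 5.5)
We would derive this version from the standard form of Szemer\'edi's regularity lemma \cite{Sz}: for every $\ep'>0$ there is $M'$ such that every graph $G$ has a partition $\{V_0',V_1',\dots,V_{k'}'\}$ with $k'\leq M'$, $|V_1'|=\dots=|V_{k'}'|$, $|V_0'|\leq \ep' n$, and all but at most $\ep' (k')^2$ of the pairs $(V_i',V_j')$ being $\ep'$-regular. The parameter $d$ in the statement plays no role in the conclusion, so it can be ignored. The only real work is converting ``few irregular pairs overall'' into ``few irregular partners for every cluster''. We do this by discarding the clusters that have too many irregular partners into the exceptional set.

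\textbf{Choice of parameters.} Apply the standard lemma with $\ep':=\ep^2/8$ and set $M:=M'$. For each $i\in[k']$, let $b_i$ be the number of $j$ such that $(V_i',V_j')$ is not $\ep'$-regular. Call $V_i'$ \emph{bad} if $b_i\geq \frac{\ep}{2}k'$.

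\textbf{Few bad clusters.} Every irregular pair is counted at most twice in $\sum_i b_i$, so $\sum_i b_i\leq 2\ep'(k')^2$. Hence the number of bad clusters is at most
$$\frac{2\ep'(k')^2}{\ep k'/2}=\frac{4\ep'}{\ep}k'=\frac{\ep}{2}k'.$$
Their total size is at most $\frac{\ep}{2}k'\cdot\frac{n}{k'}=\frac{\ep}{2}n$.

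\textbf{The new partition.} Let $V_0$ be the union of $V_0'$ and all bad clusters, so $|V_0|\leq \ep' n+\frac{\ep}{2}n\leq \ep n$. The good clusters, relabelled $V_1,\dots,V_k$, still have equal sizes, and $k\leq k'\leq M$.

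\textbf{Irregular partners.} Since $\ep'\leq\ep$, and hence $\ep'$-regularity implies $\ep$-regularity, any pair that is not $\ep$-regular was already not $\ep'$-regular. So each good $V_i$ has fewer than $\frac{\ep}{2}k'$ irregular partners among $V_1,\dots,V_k$. Moreover $k\geq (1-\frac{\ep}{2})k'\geq k'/2$, so $\frac{\ep}{2}k'\leq \ep k$. This gives fewer than $\ep k$ irregular partners for each $i\in[k]$, which is condition (iii).

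\textbf{Main obstacle.} The only delicate step is the bookkeeping. Removing clusters shrinks $k$, and the threshold must be measured against the new $k$ rather than $k'$. This is handled by the factor-$2$ slack built into the choices $\ep'=\ep^2/8$ and the threshold $\frac{\ep}{2}k'$. If one insists on $k\geq 1$, this follows from $|V_0|\leq \ep n<n$ for $\ep<1$.
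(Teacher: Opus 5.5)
Your proof is correct. The paper itself gives no proof of this statement. It quotes it as a known form of Szemer\'edi's regularity lemma, with a pointer to Zhao's book (Exercise 2.1.23).

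Your argument is the standard solution to that exercise. You take the usual version with $\ep'=\ep^2/8$ and move the at most $\frac{\ep}{2}k'$ clusters having at least $\frac{\ep}{2}k'$ irregular partners into $V_0$. You then use that $\ep'$-regularity implies $\ep$-regularity, together with $k\geq k'/2$.

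The only implicit conventions are that (iii) counts only $j\neq i$, and that $\ep\leq 1$ when you use $k\geq k'/2$. The latter is harmless, since for $\ep\geq 1$ every pair is trivially $\ep$-regular.
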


% \begin{theorem}[Degree form of the regularity lemma \cite{Sz}]\label{thm:reglem1}
% For all $\ep>0$ there exists $M$ such that for all $\ep\leq d\leq 1$ and all graphs $G$ on $n$ vertices there exists a partition $\{V_0, V_1, \dots, V_k\}$ of $V(G)$ and a spanning subgraph $G'\subseteq G$ such that 
% \begin{enumerate}
% \item $k\leq M$,
% \item $|V_1|=\dots=|V_k|$ and $|V_0|\leq \ep n$,
% \item $d_{G'}(v)> d_G(v)-(d+\ep)n$ for all $v\in V(G)$,
% \item $e(G'[V_i])=0$ for all $i\in [k]$, and
% \item for all $1\leq i<j\leq k$, $G'[V_i, V_j]$ is $\ep$-regular with density 0 or density greater than $d$. 
% \end{enumerate}
% \end{theorem}

For our application, we will begin with a 2-colored graph $G$.  We will apply the regularity lemma to the graph induced by the edges of one of the colors, say red, but we will need the reduced graph to account for the blue edges as well.  So we will tailor our definition of the reduced graph to this specific context. Let $\ep, d\in \RR^+$, let $G$ be a 2-colored graph, and let $\{V_0, V_1, \dots, V_k\}$ be a partition of $V(G)$. The \emph{2-colored reduced graph} of $G$ with parameters $\ep$ and $d$ is the 2-colored graph $\Gamma$ with vertex set $V(\Gamma)=\{V_1, \dots, V_k\}$ where $\{V_i, V_j\}\in E(\Gamma_R)$ if and only if $(V_i, V_j)$ is $\ep$-regular with density greater than $d$ in $G_R$, and $\{V_i, V_j\}\in E(\Gamma_B)$ if and only if $d_B(V_i, V_j)\geq 1-d$.  

\begin{lemma}[Properties of the 2-colored reduced graph]\label{lem:reduced}
Let $c>0$ and let $G$ be a 2-colored complete graph on $n$ vertices such that $\delta(G_R)\geq cn$.  Let $\ep, d\in \RR$ such that $0<2\ep\leq d<\frac{c}{2}$.  Apply Theorem \ref{thm:reglem} to $G_R$ to get an $\ep$-regular partition $\{V_0, V_1, \dots, V_k\}$ of $G_R$ and let $\Gamma$ be the resulting 2-colored reduced graph with parameters $\ep$ and $d$.  We have 
\begin{enumerate}
\item $\delta(\Gamma)\geq (1-\ep)k$ and
\item $\delta(\Gamma_R)\geq (c-d-\ep)k$.
\end{enumerate}
\end{lemma}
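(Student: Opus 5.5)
The plan is to read both bounds off the definition of $\Gamma$, using only that $G$ is a complete $2$-colored graph, so that for every pair $(V_i,V_j)$ we have $d_B(V_i,V_j)=1-d_R(V_i,V_j)$. I will prove (i) by a dichotomy on regular pairs and (ii) by double counting red edges leaving a cluster.

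For (i), fix a cluster $V_i$ and take $j\neq i$ with $(V_i,V_j)$ $\ep$-regular in $G_R$. If $d_R(V_i,V_j)>d$ then $V_iV_j\in E(\Gamma_R)$. Otherwise $d_R(V_i,V_j)\le d$, so $d_B(V_i,V_j)=1-d_R(V_i,V_j)\ge 1-d$, and $V_iV_j\in E(\Gamma_B)$. Hence every regular pair is an edge of $\Gamma$, and only irregular pairs can be non-edges. By Theorem \ref{thm:reglem}(iii) there are fewer than $\ep k$ indices $j$ with $(V_i,V_j)$ not $\ep$-regular. I will adopt the convention that the trivial pair $(V_i,V_i)$ is counted among these. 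Then at most $\ceiling{\ep k}-2$ indices $j\neq i$ are missing, which gives $d_\Gamma(V_i)\ge k-1-(\ceiling{\ep k}-2)\ge (1-\ep)k$. If $(V_i,V_i)$ is not counted, the same argument gives $(1-\ep)k-1$, and the rounding has to be absorbed elsewhere.

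For (ii), fix $V_i$ and let $L=|V_i|$, so that $kL=n-|V_0|\ge (1-\ep)n$. The number of red edges with an endpoint in $V_i$ is at least $cnL$, counting edges inside $V_i$ twice. I will split these edges according to where the other endpoint lies:
\begin{itemize}
\item in $V_0$: at most $|V_0|L\le \ep nL$ edges;
\item inside $V_i$: at most $L^2$ endpoint-incidences;
\item in some $V_j$ with $(V_i,V_j)$ irregular: at most $\ep k L^2$ edges;
\item in some $V_j$ with the pair regular but $d_R\le d$: at most $dkL^2$ edges;
\item in some $V_j$ with $V_iV_j\in E(\Gamma_R)$: at most $d_{\Gamma_R}(V_i)L^2$ edges.
\end{itemize}
Dividing by $L^2$ and using $n\ge kL$ gives
\[
d_{\Gamma_R}(V_i)\ \ge\ (c-d-\ep)k-\ep\tfrac{n}{L}-1.
\]

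The main obstacle is this bookkeeping. The loss from $V_0$ and the loss from irregular pairs each cost about $\ep k$, so the naive count yields roughly $(c-d-2\ep)k$ rather than $(c-d-\ep)k$. I would first try to save one $\ep$ by merging the irregular pairs with the low-density pairs: both are simply pairs that fail to be edges of $\Gamma_R$, and each of them contributes at most $L^2$ edges. I would also use the slack in the hypothesis $2\ep\le d<\frac c2$ together with the fact that $k$ is large. If exact constants cannot be recovered this way, the fallback is to invoke Theorem \ref{thm:reglem} with $\ep/3$ in place of $\ep$. The reduced graph is then defined with the finer regularity, and all three error terms together cost at most $\ep k$. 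That is harmless for the later applications, where only the shape $c-d-O(\ep)$ matters.
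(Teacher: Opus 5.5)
Your part (i) is the paper's argument: a regular pair with $d_R(V_i,V_j)\le d$ has $d_B(V_i,V_j)\ge 1-d$, so only irregular pairs can be missing from $\Gamma$. You carry it out more carefully than the paper does. Your convention that $(V_i,V_i)$ counts as irregular is actually forced. Without it, the trivial partition $V_0=\emptyset$, $V_1=V(G)$, $k=1$ satisfies Theorem~\ref{thm:reglem} and violates (i). The convention also gives $\ep k>1$ and at most $\ceiling{\ep k}-2\le \ep k-1$ irregular indices $j\neq i$; reuse the latter in (ii).

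For (ii), your double count is the right argument and is what the paper intends. The shortfall you found, however, is a genuine gap that no bookkeeping will close, because the constant $c-d-\ep$ is false in general.

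\begin{itemize}
\item The paper reaches it only through a broken display. It asserts $|V_i||V_j||B|\le e(V_i,\bigcup_{j\in B}V_j)$ and deduces $|B|\le (\ep+d)k$, and neither holds. (If $G_R$ is two disjoint cliques of order $n/2$ and the clusters refine them, about $k/2$ indices lie in $B$.) It also drops the red edges into $V_0$ and inside $V_i$.
\item Merging irregular and sparse pairs cannot help, since irregular pairs are already charged at full weight $L^2$.
\item The loss through $V_0$ is real, because all of $V_0$ may be red to $V_i$.
\end{itemize}

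For example, take $\ep=0.01$, $d=0.02$, $c=0.05$, $k=1000$, $|V_0|=10L$ and $n=1010L$. Color every edge meeting $V_0$ red, and make $V_1$ and $V_2\cup\dots\cup V_{1000}$ red cliques. Join $V_1$ in red as follows:
\begin{itemize}
\item to $14$ clusters completely;
\item to $8$ further clusters completely except for a blue $\ceiling{\ep L}\times\ceiling{\ep L}$ block, with the blocks disjoint inside $V_1$, so these $8$ pairs are irregular;
\item to the remaining $977$ clusters by a random bipartite graph of density $0.019$.
\end{itemize}
For large $L$ this satisfies (ii)--(iii) of Theorem~\ref{thm:reglem} and $\delta(G_R)\ge cn$, since vertices of $V_1$ have red degree about $51.5L>50.5L=cn$. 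Yet $d_{\Gamma_R}(V_1)=14<20=(c-d-\ep)k$.

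What your count does prove, using at most $\ep k-1$ irregular pairs, is $d_{\Gamma_R}(V_i)\ge ck-(1-c)\frac{|V_0|}{L}-\ep k-dk+d$. Since $|V_0|\le \ep n$ gives $\frac{|V_0|}{L}\le \frac{\ep k}{1-\ep}$, and $c>\ep$, this exceeds $(c-d-2\ep)k$. Conclude (ii) in that form, which is all the later applications need. Re-running regularity with $\ep/3$ is unnecessary, and it would change the hypotheses rather than prove the statement as given.
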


\begin{proof}
Note that (i) follows from Theorem \ref{thm:reglem}(iii) and the fact that if $d_R(V_i, V_j)\leq d$, then $d_B(V_i, V_j)\geq 1-d$.  As for (ii), let $i\in [k]$ and let $B=\{j\in [k]: d_R(V_i, V_j)\leq d \text{ or } (V_i, V_j) \text{ is not $\ep$-regular}\}$.  We have $$|V_i||V_j||B|\leq e(V_i, \bigcup_{j\in B}V_j)\leq \ep k|V_i||V_j|+d|V_i||V_j|k$$ and thus $|B|\leq (\ep+d)k$.  This implies $\delta(\Gamma_R)\geq (c-d-\ep)k$.  
\end{proof}

The following lemma is a fractional matching version of {\L}uczak's connected matching lemma \cite{Luc} (see the discussion in \cite[Section 1.1]{Letz}).

\begin{lemma}\label{lem:frac_cycle}
Let $0<\frac{1}{n_0}\ll \ep\ll \eta\leq 1$.  Let $G$ be a graph on $n\geq n_0$ vertices and let $\Gamma$ be the resulting reduced graph after an application of Theorem \ref{thm:reglem} to $G$ with parameters $\ep$ and $d$.  If $\Gamma$ contains a connected subgraph $\Gamma'$ such that $\Gamma'$ contains a fractional matching covering at least $\eta |V(\Gamma)|$ vertices, then $G$ contains a cycle of length $2m$ for all $4\leq 2m\leq (\eta-\ep)n$.
\end{lemma}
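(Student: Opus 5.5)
This follows the standard proof of {\L}uczak's connected matching lemma, with odd cycles in the fractional matching handled by a parity trick. Let $F$ be the fractional matching in $\Gamma'$. Its components are edges $V_iV_j$ and odd cycles $V_{i_1}\dots V_{i_s}$, and together they cover at least $\eta k$ clusters. First I would convert $F$ into a collection of weighted regular pairs. An edge component $\{V_i,V_j\}$ gives the dense $\ep$-regular pair $(V_i,V_j)$ with weight $1$. An odd cycle $V_{i_1}\dots V_{i_s}$ is handled by splitting each cluster $V_{i_r}$ into two halves $V_{i_r}^-$ and $V_{i_r}^+$ of equal size, and pairing $V_{i_r}^+$ with $V_{i_{r+1}}^-$ (indices mod $s$). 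By the slicing lemma each pair $(V_{i_r}^+,V_{i_{r+1}}^-)$ is $2\ep$-regular with density greater than $d-\ep$. This yields a family of vertex-disjoint regular pairs $(A_1,B_1),\dots,(A_p,B_p)$ with $|A_\ell|=|B_\ell|$ and total size $\sum(|A_\ell|+|B_\ell|)\geq \eta k\cdot\frac{(1-\ep)n}{k}\geq(\eta-\ep)n+\text{(slack)}$. I need a little slack for the losses below, so I will use $\ep\ll\eta$ and absorb the error. The point of this step is that every vertex set now sits in a bipartite regular pair, so every cycle I build will be even.

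Next comes the connecting step. Since $\Gamma'$ is connected, for any two consecutive pairs $(A_\ell,B_\ell)$ and $(A_{\ell+1},B_{\ell+1})$ there is a path in $\Gamma'$ between the corresponding clusters. A standard regularity argument embeds a short path in $G$ following that walk, using a bounded number (at most $k\le M$) of vertices from intermediate clusters; these are taken from the small reserved portions of the clusters. Parity needs care here. The connecting path may have either parity, but when a path enters a pair it can leave from either side, $A_\ell$ or $B_\ell$. Also, the odd cycles of $\Gamma'$ inside $F$, as well as odd closed walks in $\Gamma'$, can be used to switch parity if needed. In fact, since I only need the total cycle length to be even, I can arrange the length inside each pair to compensate. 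Ordering the pairs cyclically and linking the last pair back to the first gives a closed skeleton. In each pair $(A_\ell,B_\ell)$ I then embed a path between the prescribed endpoints, which is possible via the blow-up or path-embedding lemma for regular pairs. Such a path can have any length of the right parity between a constant and $(1-O(\sqrt\ep))(|A_\ell|+|B_\ell|)$, using super-regular subpairs obtained by deleting the few low-degree vertices.

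Finally, the cycle lengths. For a target $2m$ with $2m$ at most $(\eta-\ep)n$ but larger than some constant $K(\ep,M)$, I distribute the length greedily among the pairs: fill pairs to near capacity until the remaining length fits, and use minimal-length paths in the rest. Even better, I can use only as many pairs as needed and skip the others, connecting only the pairs that are used, since $\Gamma'$ is connected. Adjusting one pair's path length by $\pm2$ fixes the total to be exactly $2m$. For small lengths $4\le 2m\le K$, I take a single dense regular pair. Such a pair contains $K_{t,t}$ for any fixed $t$ when $n$ is large (Kővári–Sós–Turán, or the counting lemma), and $K_{t,t}$ contains all even cycles of length up to $2t$.

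The main obstacle is the bookkeeping in the connecting step. There are two issues. The connecting paths pass through clusters whose capacity is also being used by the long paths, which I handle by reserving an $\ep$-fraction of each cluster in advance. And the parity of the connections must be controlled so that the final cycle has length exactly $2m$, not $2m\pm1$. I would resolve parity by choosing on which side each path enters and leaves the bipartite pairs. If needed, I would also route connectors through an odd cycle of $F$, whose halves give both parities.
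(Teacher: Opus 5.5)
The paper does not prove this lemma; it quotes it as the fractional analogue of {\L}uczak's connected matching lemma. Your outline is the standard route: halve the clusters on the odd-cycle components of $F$ to get a connected matching of $2\ep$-regular pairs, then connect the pairs and embed long paths in them. The quantitative step, however, fails.

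You assert that the covered clusters have total size at least $(\eta-\ep)n$ plus enough slack to absorb your losses. In the worst case their total size is $\eta k\cdot\frac{(1-\ep)n}{k}=\eta(1-\ep)n$. This exceeds $(\eta-\ep)n$ by only $(1-\eta)\ep n$, which is nothing when $\eta=1$. By your own account, the construction gives up a proportion of order $\sqrt{\ep}$ of every pair: the reserved $\ep$-fractions, the low-degree vertices removed to reach super-regular pairs, and in-pair paths of length only $(1-O(\sqrt{\ep}))(|A_\ell|+|B_\ell|)$. That is about $\sqrt{\ep}\,\eta n\gg\ep n$ vertices in total, and assuming $\ep\ll\eta$ makes the comparison worse, not better.

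No bookkeeping can close this gap, because the bound as stated is too strong. Take $\eta=1$ and suitable $n$. Let $V_0$ consist of $\ep n$ isolated vertices, and let clusters $V_1,V_2$ of size $L=(1-\ep)n/2$ span a copy of $K_{L,L}$ in which one vertex keeps only one of its edges. This partition satisfies (i)--(iii) of Theorem \ref{thm:reglem}, and for any $d<\frac12$ the reduced graph $\Gamma$ is the single edge $V_1V_2$. Yet $G$ has no cycle of length $2L=(\eta-\ep)n$.

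What your argument actually gives is the lemma with $(\eta-\ep)n$ replaced by something like $(1-C\sqrt{\ep})\eta n$. You should state and prove that version. It is all the application in the proof of Theorem \ref{thm:m<n/2} uses, since there the target $2m$ lies below $\eta N$ by $\frac{\gamma}{q}m$.

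Relatedly, your embedding steps need $d$ large compared with $\ep$ (e.g.\ $\ep\ll d$), which the statement leaves out of the hierarchy. This is genuinely needed: for $d=0$, a pair spanning a single edge is already an $\ep$-regular pair of density greater than $d$.

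Your parity argument should also be replaced. With both endpoints fixed, a path inside a bipartite pair has fixed parity, so you cannot ``arrange the length inside each pair to compensate.'' Switching a pair's exit side from $B_\ell$ to $A_\ell$ only pushes one unit of parity into the next connector. Conversely, your remark that every cycle you build is even because each vertex lies in a bipartite pair is false once some connector has odd length.

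The correct observation is this. Each edge of your cycle joins two (half-)clusters whose parent clusters are adjacent in $\Gamma'$, so the cycle projects to a closed walk in $\Gamma'$ of the same length. Hence if $\Gamma'$ is bipartite, every cycle you build is automatically even. If $\Gamma'$ is not bipartite, you flip the parity by detouring one connector once around an odd cycle of $\Gamma'$, which need not come from $F$.

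With these corrections, the rest of your plan goes through: the reserved portions for the connectors, the greedy distribution of the length among pairs, and $K_{t,t}$ for short cycles.
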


We also need the following lemma to deal with the case when we can't find a red fractional matching of the desired size in the reduced graph. 

\begin{theorem}[Pulleyblank \cite{Pull}]\label{thm:frac_GE}
If the largest fractional matching in a graph $G$ covers $p$ vertices, then there exists disjoint sets $A,C,D\subseteq V(G)$ such that\footnote{Strictly speaking, $\{A, C, D\}$ may not be a partition since we could have that $A$, $C$, or $D$ is empty.} $A\cup C\cup D=V(G)$ and 
\begin{enumerate}
\item $|D|=|A|+|V(G)|-p$,
\item $|A|\geq \delta(G)$ and $2|A|+|C|=p$, so in particular $|A|\leq \floor{p/2}$ and $|C|\leq p-2\delta(G)$, and
\item $D$ is an independent set and there are no edges between $C$ and $D$.  
%\item $|C|+|D|=|V(G)|-|A|$
\end{enumerate}
\end{theorem}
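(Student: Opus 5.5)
We plan to derive the statement from a deficiency formula for fractional matchings, proved through the bipartite double cover, and then to read off $A$, $C$, $D$ from a set attaining the deficiency. Write $i(H)$ for the number of isolated vertices of a graph $H$. The target formula is
\[
p \;=\; |V(G)| - \max_{A\subseteq V(G)} \bigl(i(G-A)-|A|\bigr),
\]
where the maximum is at least $0$ because $A=\emptyset$ is allowed.

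Granting this formula, the construction is as follows. Fix $A$ attaining the maximum, let $D$ be the set of isolated vertices of $G-A$, and let $C=V(G)\setminus(A\cup D)$.
\begin{enumerate}
\item By the choice of $A$ we get $|D|-|A|=|V(G)|-p$, which is (i).
\item From $|A|+|C|+|D|=|V(G)|$ and (i) we get $2|A|+|C|=p$.
\item Since $D$ consists of isolated vertices of $G-A$, the set $D$ is independent and there are no $C$--$D$ edges, which is (iii).
\item If $D\neq\emptyset$, then every vertex of $D$ has all its neighbours in $A$, so $|A|\geq\delta(G)$. Then $|A|\leq\floor{p/2}$ and $|C|=p-2|A|\leq p-2\delta(G)$ follow at once.
\end{enumerate}
We flag one point where the statement must be read with a caveat. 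When $p<|V(G)|$, item (i) forces $D\neq\emptyset$, and everything above goes through. When $p=|V(G)|$, the bound $|A|\geq\delta(G)$ can fail: for $K_3$ a perfect fractional matching exists, yet no valid triple has $|A|\geq 2$. So (ii) is to be understood for $p<|V(G)|$, which is the only case in which it is applied, with $A=C=\emptyset$ and $D=\emptyset$ otherwise excluded. We would state this explicitly.

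For the easy direction of the formula ($\leq$), take any fractional matching $F$ and any $A$. Each component of $F$ is an edge or an odd cycle. A vertex of $D$ covered by $F$ has all its $F$-neighbours in $A$. In an edge component at most one endpoint lies in $D$ and the other lies in $A$. In a cycle component the $D$-vertices are pairwise non-consecutive and both cycle-neighbours of each lie in $A$, so $\#(D\cap\text{cycle})\leq\#(A\cap\text{cycle})$. Hence at most $|A|$ vertices of $D$ are covered, and so $F$ misses at least $i(G-A)-|A|$ vertices.

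For the hard direction ($\geq$), pass to the bipartite double cover $G\times K_2$, with sides $V'$ and $V''$ and an edge $u'v''$ for each edge $uv$ of $G$. By K\H{o}nig--Hall, a maximum matching $M$ of $G\times K_2$ has size $|V|-\max_S(|S|-|N(S)|)$ for $S\subseteq V'$. A maximum matching of the double cover decomposes into alternating cycles and paths once projected to $G$. Even cycles can be split into edges, and odd cycles are kept, so the projection gives a fractional matching covering $|M|$ vertices; conversely, a fractional matching lifts. Therefore $p=|V|-\max_S(|S|-|N_G(S)|)$. It remains to convert a maximizer $S$ into a set $A$ with $i(G-A)-|A|\geq |S|-|N(S)|$.

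This conversion is the main obstacle. We would first try $A=N(S)\setminus S$ and $D=S\setminus N(S)$, which makes $D$ independent with all its neighbours in $A$. The difficulty is the overlap $S\cap N(S)$. The plan is to show that a maximizer may be chosen with $S\cap N(S)$ contributing nothing to the excess. Concretely, we would argue that $G[S\cap N(S)]$ restricted this way has a perfect fractional matching, and that removing $S\cap N(S)$ from $S$ does not decrease $|S|-|N(S)|$; this is the standard step in the Gallai--Edmonds decomposition for 2-matchings. Once that holds, $|D|-|A|=|S|-|N(S)|$, which closes the argument.
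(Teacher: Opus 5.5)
Your approach is correct, and the step you flag as the main obstacle is not one: your first try already works. With $A=N(S)\setminus S$ and $D=S\setminus N(S)$ you have shown that every vertex of $D$ is isolated in $G-A$. Moreover, for every $S\subseteq V(G)$, maximizer or not,
\[
|D|-|A|=\bigl(|S|-|S\cap N(S)|\bigr)-\bigl(|N(S)|-|S\cap N(S)|\bigr)=|S|-|N(S)|,
\]
because the overlap is subtracted from both terms and cancels. Hence $\max_A\bigl(i(G-A)-|A|\bigr)\geq\max_S\bigl(|S|-|N(S)|\bigr)$ immediately, and the formula follows.

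You should drop the planned detour through a perfect fractional matching of $G[S\cap N(S)]$. It is unnecessary, and the natural way to establish it would invoke Tutte's criterion for perfect 2-matchings, which is the case $p=|V(G)|$ of the very formula you are proving.

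There is also a small omission in the projection step. Besides cycles, the image of $M$ contains directed paths, and digons (single edges traversed both ways). A path with $\ell$ arcs has a matching covering at least $\ell$ vertices, so you still get a fractional matching covering at least $|M|$ vertices.

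Your caveat is right. When $p=|V(G)|$ the requirement $|A|\geq\delta(G)$ can fail, for example in $K_3$, $K_4$ or $C_5$. When $p<|V(G)|$ it is automatic, since then $D\neq\emptyset$ and every vertex of $D$ has all its neighbours in $A$. In both applications in the paper, $\Gamma_R^i$ has at least $|U_i|k/N\ge\frac{(2+\gamma/q)m}{N}k$ vertices while $p$ lies below that threshold, so the degenerate case never arises there.

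As for the comparison, the paper gives no proof and simply quotes Pulleyblank's Gallai--Edmonds theorem for fractional matchings. That theorem supplies a canonical decomposition, with $D$ the vertices missed by some maximum fractional matching, $A$ its neighbourhood and $C$ the rest, and it carries more structure than is stated here. You instead derive the Tutte--Berge-type deficiency formula for fractional matchings from K\H{o}nig--Hall on the bipartite double cover, and then read off a non-canonical triple from any extremal $A$.

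Your route is elementary and self-contained. It yields exactly what the paper later feeds into Lemma \ref{lem:case2}: the identity $2|A|+|C|=p$, the lower bound on $|D|$, and the fact that $D$ is independent with no edges to $C$. The extra canonical structure, which is what citing Pulleyblank buys, is not needed there.
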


We will also use the following well-known result of Moon and Moser \cite{MM}.

\begin{theorem}[Moon and Moser]\label{thm:MM}
Let $n$ be an integer with $n\geq 2$ and let $G$ be a bipartite graph with bipartition $\{A, B\}$ such that $|A| = |B| = n$. If $\delta(G) > n/2$, then $G$ has a Hamiltonian cycle. 
\end{theorem}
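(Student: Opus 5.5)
We would prove Theorem \ref{thm:MM} by a bipartite version of the Bondy--Chv\'atal closure argument, the same device behind Ore-type Hamiltonicity results. The key step is the following claim. Let $a\in A$ and $b\in B$ be non-adjacent with $d(a)+d(b)\geq n+1$. Then $G$ is Hamiltonian if and only if $G+ab$ is Hamiltonian. Given this claim, the theorem follows quickly. Since $\delta(G)>n/2$ and degrees are integers, every pair $a\in A$, $b\in B$ satisfies $d(a)+d(b)\geq n+1$. Adding missing $A$--$B$ edges one at a time only increases degrees, so the hypothesis of the claim keeps holding. Hence $G$ is Hamiltonian if and only if $K_{n,n}$ is, and $K_{n,n}$ is Hamiltonian for $n\geq 2$.

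To prove the claim, we only need the direction showing that if $G+ab$ has a Hamiltonian cycle then so does $G$. If the Hamiltonian cycle of $G+ab$ avoids $ab$ we are done. Otherwise, deleting $ab$ gives a Hamiltonian path $a=x_1,x_2,\dots,x_{2n}=b$ in $G$. Since $G$ is bipartite and balanced, the path alternates between the parts: $x_i\in A$ for odd $i$ and $x_i\in B$ for even $i$. Thus every neighbour of $a$ is some $x_{i+1}$ with $i$ odd, and every neighbour of $b$ is some $x_i$ with $i$ odd.

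Define two subsets of the $n$ odd indices $\{1,3,\dots,2n-1\}$:
\[
S=\{i \text{ odd}: ax_{i+1}\in E(G)\},\qquad T=\{i \text{ odd}: x_ib\in E(G)\}.
\]
We have $|S|=d(a)$ and $|T|=d(b)$, so $|S|+|T|\geq n+1>n$, and therefore some odd $i$ lies in $S\cap T$. We must check that this $i$ is not a degenerate endpoint. Taking $i=1$ would require $x_1=a$ to be adjacent to $b$, and taking $i=2n-1$ would require $x_{2n}=b$ to be adjacent to $a$; both are excluded because $ab\notin E(G)$. So $1<i<2n-1$, and
\[
x_1x_2\cdots x_i\, x_{2n}x_{2n-1}\cdots x_{i+1}\, x_1
\]
is a Hamiltonian cycle in $G$, using the edges $x_ib$ and $ax_{i+1}$.

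The main point of care is the parity bookkeeping. Both index sets must live in the same ground set of size $n$, not $2n$, which is exactly why the threshold is $n/2$ rather than $n$. The endpoint cases must also be ruled out, as done above. Beyond that, the argument is routine.
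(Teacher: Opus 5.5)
Your proof is correct. Both index sets sit inside the same $n$ odd indices, and $ab\notin E(G)$ gives $1\notin T$ and $2n-1\notin S$, so any $i\in S\cap T$ satisfies $1<i<2n-1$ and the rerouted cycle is valid. The closure step is also sound: adding an $A$--$B$ edge keeps the graph balanced bipartite with the same parts and only raises degrees, so the hypothesis $d(a)+d(b)\geq n+1$ persists all the way up to $K_{n,n}$.

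The paper gives no proof of Theorem \ref{thm:MM}. It cites Moon and Moser and uses the result as a black box, only in Case 2 of Lemma \ref{lem:case1}, to find a $C_{2n}$ in a balanced bipartite graph whose degrees exceed half the size of each part.

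Your argument is therefore a self-contained replacement for that citation. It also proves the stronger Ore-type bipartite statement: a balanced bipartite graph is Hamiltonian whenever $d(a)+d(b)\geq n+1$ for all nonadjacent $a\in A$, $b\in B$. This stronger form would serve the paper's application equally well.
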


%We also use the following corollary of Theorem \ref{thm:jackson}.

% \begin{corollary}[Jackson]\label{cor:jackson}
% Let $n \geq 2$ be an integer and let $G$ be a bipartite graph with bipartition $\{X,Y\}$ such that $|X| \geq n$, $|Y| \geq n$. If every vertex in $Y$ has degree at least $\max\{n, |X| - (n - 2)\}$ to $X$, then $G$ contains a cycle of length $2n$.
% \end{corollary}

% \begin{proof}
% We consider two cases based on the size of $X$.

% \textbf{Case 1} ($|X| \leq 2n-2$) In this case $\max\{n, |X| - n + 2\}=n$.  Since every vertex in $Y$ has degree at least $n$ to $X$, we can apply Theorem \ref{thm:jackson} to $G$ with $k=n$ and $Y'\subseteq Y$ with $|Y'|=n$ to get a cycle of length $2n$.

% \textbf{Case 2} ($|X| > 2n-2$.) In this case $\max\{n, |X| - (n-2)\}=|X|-(n-2)$. Let $X' \subseteq X$ with $|X'|=2n-2$. For all $y \in Y$, the number of neighbors of $y$ in $X'$ is at least $|X'| - (n-2) = n$.  Thus we can then apply Theorem \ref{thm:jackson} to the induced subgraph $G[X', Y]$ with $k=n$ and $Y'\subseteq Y$ with $|Y'|=n$ to get a cycle of length $2n$.
% \end{proof}

The following is a generalization of Lemma \ref{lem:partite_matching} where we allow for a nearly complete multipartite graph (and it is stated directly in terms of wheels).

\begin{lemma}\label{lem:case1}
Let $t$ be an integer with $t\geq 3$ and let $0<\frac{1}{n_0}\ll d\ll \alpha\ll \beta\leq \frac{1}{t}$.  Let $n$ and $N$ be positive integers with $n\geq n_0$ and $N\leq 3n$.
Let $G$ be a $t$-partite graph on $N$ vertices with partition $\{V_1, \dots, V_t\}$ such that $\beta N\leq |V_1|\leq \dots \leq |V_t|$.  If 
\begin{enumerate}
\item $|V_2|+\dots+|V_t|\geq (2+\beta)n$, 
\item $|V_2|+\dots+|V_{t-1}|\geq (1+\beta)n$, and
\item for all $1\leq i<j\leq t$, $e(V_i, V_j)\geq (1-d)|V_i||V_j|$,
\end{enumerate}
then $W_{2n}\subseteq G$.
\end{lemma}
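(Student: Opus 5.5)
Throughout, $\frac{1}{n_0}\ll d\ll\alpha\ll\beta$, all $|V_i|\ge\beta N$, and $N\le 3n$, so every part has linear size and $t\le 1/\beta$.

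The plan is to first clean $G$ so that it is nearly complete multipartite in a minimum-degree sense, then pick a centre in $V_1$, and finally find a $C_{2n}$ in the centre's neighbourhood inside $V_2\cup\dots\cup V_t$ using the size conditions (i) and (ii).

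\emph{Cleaning.} By (iii), for each ordered pair $(i,j)$ all but at most $\sqrt d|V_i|$ vertices of $V_i$ have at least $(1-\sqrt d)|V_j|$ neighbours in $V_j$. Discarding the bad vertices for each of the at most $t^2$ pairs gives sets $V_i'\subseteq V_i$ with $|V_i\setminus V_i'|\le t\sqrt d|V_i|\le \alpha|V_i|$. In the cleaned graph every vertex of $V_i'$ is adjacent to all but at most $2\alpha |V_j'|$ vertices of $V_j'$, for every $j\ne i$. Since $N\le 3n$, conditions (i) and (ii) survive with $\beta$ replaced by $\beta/2$. Now pick any $v\in V_1'$ and set $U_j:=N(v)\cap V_j'$ for $2\le j\le t$. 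Then $H:=G[U_2\cup\dots\cup U_t]$ is a $(t-1)$-partite graph with $|U_2|+\dots+|U_t|\ge(2+\beta/3)n$ and $|U_2|+\dots+|U_{t-1}|\ge(1+\beta/3)n$, in which every vertex misses at most $3\alpha$ of each other part. It then suffices to find a $C_{2n}$ in $H$, which together with $v$ gives $W_{2n}$.

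\emph{Finding $C_{2n}$ in $H$.} Let $U_t$ be the largest part, $S:=U_2\cup\dots\cup U_{t-1}$, and choose sizes $a+b=2n$ with a cycle design depending on the part sizes.

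\textbf{Case 1:} $|S|\ge n$ and $|U_t|\ge n$. Take $n$ vertices from $U_t$ and $n$ vertices from $S$. The bipartite graph between them has minimum degree at least $(1-3\alpha)n$, counting only the edges between $U_t$ and $S$, which are nearly complete. Theorem~\ref{thm:MM} then gives a Hamiltonian cycle, which is a $C_{2n}$.

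\textbf{Case 2:} $|U_t|<n$. Here $H$ itself has more than $2n$ vertices and its largest part is less than half of it, by (i). Choose a $2n$-vertex subset $W$ that keeps the proportions, with each part of $W$ of size at most $n$. Since every part has size about $\beta N$ or more, the $3\alpha$-fraction of missing edges per vertex is tiny compared with the degrees. Hence $\delta(H[W])\ge 2n-\max_j|W\cap U_j|-6\alpha n$.

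This last bound is the main obstacle. It is only at least $|W|/2$ when the largest part of $W$ is noticeably smaller than $n$, so the near-balanced sub-case is where Theorem~\ref{thm:bon} could fail. I would handle it as follows:
\begin{itemize}
\item If the largest part of $W$ has size at least $(1-\beta/10)n$, treat $H[W]$ as a near-balanced bipartite graph between that part and the rest, and apply Theorem~\ref{thm:MM}. Because $|S|\ge(1+\beta/3)n$, there is slack to choose $W$ with exactly $n$ vertices on each side.
\item Otherwise, the degree bound is at least $(1+\beta/20)n>|W|/2$, so Theorem~\ref{thm:bon} applies. $H[W]$ is not bipartite-balanced, and in fact is not bipartite at all, since $t-1\ge 2$ parts with edges between all pairs give triangles when $t\ge4$. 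So $H[W]$ is Hamiltonian and gives $C_{2n}$.
\item If $t=3$, $H$ is bipartite between $U_2$ and $U_3$, and (ii) gives $|U_2|\ge(1+\beta/3)n$. Then Case 1 applies directly.
\end{itemize}

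In all cases we obtain $C_{2n}\subseteq N(v)$, and hence $W_{2n}\subseteq G$.
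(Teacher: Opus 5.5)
Your argument is correct and is essentially the paper's proof. It uses the same cleaning, the same centre $v\in V_1'$, and the same dichotomy: either one part of $N(v)$ dominates, and you apply Theorem~\ref{thm:MM} to $n$ vertices of the largest part against $n$ vertices of the rest, or none does, and a Dirac/Bondy minimum-degree argument applies. The differences are cosmetic: you split at $|U_t|\ge n$ rather than $|V_t''|>(\frac12-2\alpha)N_H$, and you apply Dirac to a proportional $2n$-subset rather than Theorem~\ref{thm:bon} to all of $H$.

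The first bullet of your Case~2 is vacuous. A proportional $W$ has every part of size at most $\frac{2n}{|V(H)|}|U_t|+1<\frac{n}{1+\beta/6}+1<(1-\frac{\beta}{10})n$. This is fortunate, because the Moon--Moser sketch there would not work as stated: with $|U_t|<n$ you cannot put $n$ vertices of $U_t$ on one side, and padding that side with vertices of $S$ destroys the bipartite structure.
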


\begin{proof}  
For all distinct $i,j\in [t]$, let $X_{i,j}=\{x\in V_i: d(x, V_j)<(1-\alpha)|V_j|\}$ and note that since $(1-d)|V_i||V_j|\leq e(V_i, V_j)<|X_{i,j}|(1-\alpha)|V_j|+(|V_i|-|X_{i,j}|)|V_j|$ we have $|X_{i,j}|<\frac{d}{\alpha}|V_i|$.  For all $i\in [t]$, let $X_i=\bigcup_{j\in [t]\setminus \{i\}}X_{i,j}$ and note that $|X_i|<\frac{d (t-1)}{\alpha}|V_i|$.  Let $G'=G[V(G)\setminus \bigcup_{i\in [t]}X_i]$ and for all $i\in [t]$, let $V_i'=V_i\setminus X_i$.  So for all $i\in [t]$ we have 
\begin{equation}\label{eq:Vi'}
|V_i'|=|V_i|-|X_i|>(1-\frac{d (t-1)}{\alpha})|V_i|\geq (1-\alpha)|V_i|,
\end{equation}
and for all distinct $i,j\in [t]$, we have 
\begin{equation}\label{eq:ViVj}
\delta_{G'}(V_i', V_j')\geq (1-\alpha)|V_j|-|X_j|\geq (1-\alpha-\frac{d(t-1)}{\alpha})|V_j| \geq (1-2\alpha)|V_j'|,
\end{equation}
where the last inequality in each case holds since $t\leq \frac{1}{\beta}$ and $d< \alpha^2 \beta$.

Let $v\in V_1'$ and for all $2\leq i\leq t$, let $V_i''=N(v)\cap V_i'$.  By reindexing if necessary, suppose $|V_2''|\leq \dots \leq |V_t''|$.  Set $H := G'[N(v)]$, $N_H:=|V(H)|$, and $X:=\bigcup_{i=2}^{t-1}V_i''$.  By \eqref{eq:Vi'} and \eqref{eq:ViVj} we have 
\begin{equation}\label{eq:N_H1}
N_H=\sum_{i=2}^t|V_i''|\geq (1-2\alpha)\sum_{i=2}^t|V_i'|\geq (1-2\alpha)(1-\alpha)(2+\beta)n\geq (2+\frac{\beta}{2})n
\end{equation}
and 
\begin{equation}\label{eq:|X|}
|X|=\sum_{i=2}^{t-1}|V_i'|\geq (1-2\alpha)\sum_{i=2}^{t-1}|V_i'|\geq (1-2\alpha)(1-\alpha)(1+\beta)n\geq (1+\frac{\beta}{2})n.  
\end{equation}
Now we check that $H$ contains a cycle of length $2n$ which will give us a $W_{2n}$ with $v$ as the center.  First note that by \eqref{eq:N_H1}, we have $|V(H)|>2n$ and thus $H$ is large enough to contain the desired cycle.  

\textbf{Case 1:} $|V_t''|\leq (\frac{1}{2}-2\alpha)N_H$.  In this case we have that for all $2\leq i\leq t$ and all $u\in V_i''$, 
\begin{align*}
d_H(u)\geq (1-2\alpha)(N_H-|V_i''|)\geq (1-2\alpha)(N_H-(\frac{1}{2}-2\alpha)N_H) &= (1-2\alpha)(\frac{1}{2}+2\alpha)N_H\\
&= (\frac{1}{2}+\alpha-4\alpha^2)N_H>\frac{N_H}{2}.
\end{align*}
Thus by Theorem \ref{thm:bon}, we have a copy of $C_{2n}$ in $H$.

\textbf{Case 2:} $|V_t''|> (\frac{1}{2}-2\alpha)N_H$.  Note that $$|V_t''|>(\frac{1}{2}-2\alpha)N_H \geq (\frac{1}{2}-2\alpha)(2+\frac{\beta}{2})n = (1+\frac{\beta}{4}-4\alpha-\alpha\beta)n> (1+\frac{\beta}{8})n.$$ By \eqref{eq:|X|} we have $|X|>(1+\frac{\beta}{2})n$.  Furthermore, every vertex in $X$ has degree at least $(1-2\alpha)|V_t''|$ to $V_t''$ and 
every vertex in $V_t''$ has degree at least $(1-2\alpha)|X|$ to $X$.  
Let $A\subseteq X$ and $B\subseteq V_t''$ with $|A|=n=|B|$.
Since every vertex in $A$ has at least $|B|-2\alpha|V_t''|>\frac{n}{2}$ neighbors in $B$ and every vertex in $B$ has at least $|A|-2\alpha|X|>\frac{n}{2}$ neighbors in $A$, we can apply Theorem  \ref{thm:MM} to the bipartite subgraph $G'[A,B]$ to get a copy of $C_{2n}$ in $H$.
\end{proof}

The following lemma will be used in the case when we cannot use the previous lemma.  This will happen when there are parts of the partition which are very large in terms of $m$ and in this case we will apply Theorem \ref{thm:frac_GE} to the large parts, giving us extra information to work with.  To prove the lemma, we will use the following Chv\'atal-type strengthening of Theorem \ref{thm:bon}. 

\begin{theorem}[Schmeichel, Hakimi \cite{SH}]\label{SHthm}
Let $G$ be a graph on $n\geq 3$ vertices with degree sequence $d_1\leq d_2\leq \dots \leq d_n$.  If for all $1\leq k< n/2$, $d_k\leq k$ implies $d_{n-k}\geq n-k$, then $G$ is pancyclic or isomorphic to $K_{n/2,n/2}$.

In particular, if for all $1\leq k< n/2$, $d_k\leq k$ implies $d_{n-k}>n-k$, then $G$ is pancyclic.
\end{theorem}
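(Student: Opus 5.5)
Since this is a cited result, I would not reprove it from scratch. The plan is to assemble it from Chv\'atal's Hamiltonicity theorem and a Bondy-type counting argument along a Hamiltonian cycle.

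\textbf{Step 1: a Hamiltonian cycle.} The hypothesis is exactly Chv\'atal's degree-sequence condition: for $k<n/2$, $d_k\leq k$ implies $d_{n-k}\geq n-k$. Chv\'atal's theorem therefore gives that $G$ is Hamiltonian. I would take the proof through the Bondy--Chv\'atal closure, because the closure argument yields more than a Hamiltonian cycle. It also gives a Hamiltonian cycle $v_1v_2\dots v_nv_1$ with $d(v_1)+d(v_n)\geq n$. To get this, take a non-adjacent pair $u,w$ of maximum degree sum that the closure adds, and follow the usual rotation argument. If $G$ is complete there is nothing to do.

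\textbf{Step 2: counting.} With such a cycle fixed, I would run the standard argument (as in Bondy's proof of Theorem \ref{thm:bon}). Suppose $G$ has no cycle of length $\ell$ for some $3\leq \ell\leq n-1$. Then for each $i$, $v_1v_i\in E(G)$ forbids $v_nv_{i+\ell-2}\in E(G)$ (indices shifted appropriately along the path $v_1\dots v_n$). Otherwise $v_1v_i$, the path segment, and $v_nv_{i+\ell-2}$ would close a cycle of length $\ell$, or of the complementary length, which is handled by the symmetric pairing. Summing these exclusions over $i$ gives $d(v_1)+d(v_n)\leq n$. Combined with Step 1 this forces equality. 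Equality pins down the neighbourhoods of $v_1$ and $v_n$ rigidly, as alternating positions on the cycle.

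\textbf{Step 3: the equality case (main obstacle).} Here I would propagate the rigid structure along the cycle, repeating the argument with rotated Hamiltonian cycles obtained from chords. The aim is to show that either all cycle lengths appear after all, or every vertex has degree exactly $n/2$ with neighbours alternating, that is, $G\cong K_{n/2,n/2}$. The propagation needs rotated cycles whose endpoints again have degree sum at least $n$. Securing these is the delicate point, and I expect the whole difficulty to sit here. I would first try a Hakimi--Schmeichel style lemma: a Hamiltonian graph with consecutive cycle vertices of degree sum at least $n$ is pancyclic, or bipartite, or missing only an $(n-1)$-cycle in a specific structured way. The Chv\'atal condition would then rule out the exceptional structures.

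\textbf{The ``in particular'' clause.} This should be deduced by checking that the strict hypothesis excludes $K_{n/2,n/2}$. As written, with $k<n/2$, the hypothesis is vacuous for $K_{n/2,n/2}$, since $d_k=n/2>k$ for every such $k$. So the strict version must be read with $k$ allowed to equal $n/2$ when $n$ is even. At $k=n/2$ we have $d_{n/2}=n/2\leq k$, while $d_{n/2}>n/2$ fails. Hence $K_{n/2,n/2}$ violates the strict condition, and the first part yields pancyclicity. I would state the clause with $1\leq k\leq n/2$ to make this deduction valid.
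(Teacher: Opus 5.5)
The paper gives no proof of this theorem; it is quoted from Schmeichel and Hakimi. So the only thing to measure your plan against is the statement itself, and the statement is the problem. Your reduction to the Schmeichel--Hakimi cycle-structure theorem is the right framework. That theorem says a Hamiltonian cycle $v_1\dots v_nv_1$ with $d(v_1)+d(v_n)\geq n$ forces $G$ to be pancyclic, bipartite, or missing only an $(n-1)$-cycle. However, Step~3 aims at a false conclusion: no argument can show that every vertex has degree exactly $n/2$. Take $G=K_{p,p}$ minus one edge, with $p\geq 4$ and $n=2p$. Its sorted degrees are $p-1,p-1,p,\dots,p$, so $d_k>k$ for every $k<n/2$, and both the non-strict and the strict hypotheses hold vacuously. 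Yet $G$ is bipartite, hence triangle-free and not pancyclic, and it is not $K_{p,p}$. Removing a matching of size $s$ with $2s\leq p-2$ works equally well. So Chv\'atal's condition cannot eliminate the bipartite alternative of the structure theorem; it only forces a bipartite $G$ to be balanced, via Hamiltonicity. The correct first sentence is ``pancyclic or bipartite'', and that is the most your Steps~1--3 could ever deliver. The Chv\'atal condition is only useful for ruling out the $(n-1)$-cycle exception.

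You correctly noticed that the strict clause is vacuous for $K_{n/2,n/2}$ when only $k<n/2$ is allowed; the same vacuity breaks the first sentence. Your repair, allowing $k=n/2$, is the right one, and it works from the corrected statement. The strict condition implies Chv\'atal's, so $G$ is Hamiltonian and is pancyclic or bipartite. A bipartite Hamiltonian graph has $n$ even and all degrees at most $n/2$, so $d_{n/2}\leq n/2$, contradicting the condition at $k=n/2$. The paper's application in Lemma~\ref{lem:case2} only checks $k<N_H/2$, so it instead needs the observation that $H$ is not bipartite. This holds there, since fewer than $N_H/2$ vertices of $H$ have degree at most $N_H/2$. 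Separately, Step~1 does not produce what Step~2 needs. The closure pair $u,w$ is non-adjacent, so the rotation argument gives a Hamiltonian cycle on which $u$ and $w$ are not consecutive. The counting and the structure theorem, however, require two \emph{consecutive} vertices of degree sum at least $n$, and finding them needs a separate argument.
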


\begin{lemma}\label{lem:case2}
Let $t$ be a positive integer and let $0 < \frac{1}{n_0} \ll d \ll \eta\ll \alpha \ll \beta \ll \gamma \leq \frac{1}{t}$. Let $n, m,$ and $N$ be positive integers with $n \geq n_0$, $\frac{\alpha}{\gamma} n \leq m < \frac{n}{2}$, and $(2+\gamma)n+m \leq N \leq 3n$. Let $G^*$ be a graph on $N$ vertices with partition $\{V_1, \dots, V_t\}$ such that $\beta N \leq |V_1| \leq \dots \leq |V_t|$ and 
\begin{enumerate}
\item for all $1 \leq i < j \leq t$, $G^*[V_i, V_j]$ is a complete bipartite graph,
\item $|V_{t-r}| < (2+\beta)m \leq |V_{t-r+1}|$,
\item for all $t-r+1 \leq i \leq t$, $V_i$ is partitioned as $\{A_i, C_i, D_i\}$ where $|C_i| + 2|A_i| < (2+\alpha)m$ and $|D_i| \geq |A_i| + |V_i| - (2+\alpha)m$, and 
\item for all $t-r+1 \leq i \leq t$, $G^*[D_i]$ is complete and $G^*[C_i, D_i]$ is a complete bipartite graph.
\end{enumerate}
If $G \subseteq G^*$ with $V(G)=V(G^*)$ such that for all $i,j \in [t]$ (not necessarily distinct), $e_{G}(V_i, V_j) \geq (1-d)e_{G^*}(V_i, V_j)$, then $G-A_t$ contains a copy of $W_{2n}$ with center in $D_t$.
\end{lemma}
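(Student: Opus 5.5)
Throughout, \emph{large} parts are $V_{t-r+1},\dots,V_t$ and \emph{small} parts are the rest. The plan is to imitate the proof of Lemma \ref{lem:case1}: first clean $G$ so that it behaves like a genuinely complete structure on large vertex sets, then pick a centre $v\in D_t$ and find a $C_{2n}$ in $H:=G[N(v)]-A_t$.

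\textbf{Cleaning.} For every ordered pair of ``blocks'' on which $G^*$ is complete, delete the few vertices of low degree. The blocks are $(V_i,V_j)$ with $i\neq j$, $(D_i,D_i)$, and $(C_i,D_i)$. Since $e_G\geq(1-d)e_{G^*}$, only a $\sqrt d$-fraction of each block is lost, provided the block is not tiny relative to $N$. Blocks of size $o(N)$, say $|D_i|<\eta N$, are simply treated as absent. After cleaning, every surviving vertex is adjacent in $G$ to all but an $\alpha$-fraction of each block it is completely joined to in $G^*$.

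Next we need $D_t$ to be large, so that there is room for a centre. From (iii), $|D_t|\geq |V_t|-(2+\alpha)m+|A_t|$. Since $|A_t|\geq 0$ and $|V_t|\geq (2+\beta)m$, this gives $|D_t|\geq(\beta-\alpha)m$, which is linear in $n$ because $m\geq\frac{\alpha}{\gamma}n$. Choose $v\in D_t$ that survived cleaning. Its neighbourhood in $G-A_t$ then contains, up to a $(1-2\alpha)$ factor:
\begin{itemize}
\item all of $V_1\cup\dots\cup V_{t-1}$;
\item $D_t\setminus\{v\}$;
\item $C_t$.
\end{itemize}
Hence $N_H\gtrsim N-|V_t|+|D_t|+|C_t|=N-|A_t|$. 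Using $2|A_t|+|C_t|<(2+\alpha)m$ gives $|A_t|<(1+\alpha/2)m$, so $N_H\geq (2+\gamma)n+m-(1+\alpha)m\geq(2+\gamma/2)n$.

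\textbf{Finding $C_{2n}$ in $H$.} $H$ is a nearly complete ``multipartite-plus-cliques'' graph. Its blocks are the other parts $V_1,\dots,V_{t-1}$, each of which is itself structured if large, together with $D_t\cup C_t$, where $D_t$ is a near-clique and $C_t$ is joined to $D_t$. The intended route to pancyclicity is Theorem \ref{SHthm}. A vertex of $H$ in a small part $V_i$ has degree at least $(1-2\alpha)(N_H-|V_i|)$. A vertex in a large part $V_i$ with $i<t$ additionally sees its own $D_i$ when it lies in $C_i\cup D_i$. Vertices in $A_i$ ($i<t$) see only the outside of $V_i$, but there are at most $(1+\alpha)m$ of them per part.

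Every vertex outside $\bigcup_i A_i$ therefore misses at most about $|V_i|-|D_i|\leq (2+\alpha)m-|A_i|$ vertices of its own part. Since $m<n/2$, this is at most roughly $n+\alpha m<N_H/2$, so such vertices have degree $>N_H/2$ up to error terms. The low-degree vertices (those in $A_i$, and possibly those in $C_t$, which see all of $H$ except part of $C_t$) need to be few, with correspondingly large degrees, to satisfy the Chv\'atal-type condition $d_k\leq k\Rightarrow d_{N_H-k}>N_H-k$.

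If the degree-sequence condition fails, fall back on the bipartite alternative of Case 2 in Lemma \ref{lem:case1}. Locate a part $V_i''$ of size about $N_H/2$ and its complement $X$, choose $A\subseteq X$ and $B\subseteq V_i''$ with $|A|=|B|=n$ and minimum cross degree $>n/2$, and apply Theorem \ref{thm:MM}.

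\textbf{Main obstacle.} The main obstacle is the degree-sequence verification. We must bound the number $k$ of vertices whose $H$-degree is at most $k$. The candidates are those in $\bigcup_{i<t}A_i$, which have degree about $N_H-|V_i|$, and those in $C_t$. We then need to show the top $N_H-k$ degrees exceed $N_H-k$.

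I would sort the vertices by ``deficiency'' (the number of non-neighbours in $H$). Deficiency is at most $|V_i|$ in a small part. It is at most $|C_i|+|A_i|\leq (2+\alpha)m-|A_i|$ for vertices in $C_i\cup D_i$ of a large part. It is $|V_i|$ for vertices in $A_i$. I would then check that for each threshold $k$, the number of vertices with deficiency $\geq N_H-k$ is less than $k$, using $|V_i|<(2+\beta)m$ for small parts and $N_H\geq(2+\gamma/2)n$. If one large $A_i$ breaks this, the graph is essentially bipartite between $A_i$ and the rest, and the Moon--Moser fallback above handles it.
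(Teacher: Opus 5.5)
Your set-up is the same as the paper's. You clean $G$, pick a centre $x\in D_t'$, let $H$ be its neighbourhood with $A_t$ removed, and get $N_H>(2+\frac{\gamma}{2})n$. You then note that every vertex of $H$ outside $\bigcup_{t-r<i<t}A_i''$ misses fewer than $(2+\beta)m$ vertices in $G^*$, so $d_H>\frac{N_H}{2}$ there. This includes $C_t$, which misses only $A_t\cup C_t$, so $C_t$ is not a source of low degrees.

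But the proposal stops exactly where the lemma's content lies. You say you \emph{would} check the condition of Theorem \ref{SHthm} and, if a large $A_i$ breaks it, fall back on Moon--Moser. Neither step is carried out, and the fallback cannot be relied on. Taking $A\subseteq X$ with $|A|=n$ needs $|V(H)\setminus V_i''|\geq n$, and nothing guarantees this. Take $t=2$, $|V_1|=|V_2|=\frac N2$, $C_1=C_2=\emptyset$, $|A_1|=|A_2|=m$, $m$ close to $\frac n2$, and $\gamma$ small. Then the vertices of $A_1''$ have $H$-degree about $|D_2|\approx(\frac34+\frac{\gamma}{2})n<\frac{N_H}{2}$, which is exactly the situation you worry about. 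Yet $V(H)\setminus V_1''$ is essentially $D_2$, which has fewer than $n$ vertices. So any $C_{2n}$ must use edges inside $V_1$, and $H$ is far from bipartite.

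What is missing is control of the $A_i''$-vertices, which takes two steps.

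\emph{Uniqueness.} A vertex $v\in A_i''$ with $d_H(v)\leq\frac{N_H}{2}$ has all its $G^*$-non-neighbours in $V_i$, which forces $|V_i|\geq\frac{N_H}{2}-O(\eta n)$. If this held for two indices $i<j<t$, then $|V_i|+|V_j|+|V_t|\geq 3(\frac{N_H}{2}-O(\eta n))>N$. This uses $N_H\geq N-|A_t|-O(\eta n)$ and the fact that $|A_t|<(1+\frac{\alpha}{2})m$ is comfortably below $\frac N3$. So all low-degree vertices lie in a single $A_i''$. Without this, low-degree vertices from several $A_i$ could outnumber $k$.

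\emph{Degree bound.} Since $i<t$ gives $|V_i|\leq\frac N2$, each such vertex has $d_H\geq N_H-|V_i|-O(\eta n)\geq\frac N2-|A_t|-O(\eta n)$. Moreover $\frac N2-|A_t|-|A_i|\geq(1+\frac{\gamma}{2})n+\frac m2-(2+\alpha)m>\frac n4$, because $m<\frac n2$.

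Now suppose $d_k\leq k<\frac{N_H}{2}$. Then the $k$ lowest-degree vertices lie in $A_i''$, so $k\leq|A_i|<d_k$, a contradiction. So the hypothesis of Theorem \ref{SHthm} is never triggered, $H$ is pancyclic, and no fallback is needed. The paper reaches the conclusion slightly differently. After the same uniqueness argument, it checks $d_{N_H-k}>N_H-k$ directly, using that $D_i''$ has more than $|A_i|\geq k$ vertices, each missing only $A_i$ in $G^*$.
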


\begin{figure}[ht]
    \centering
    \includegraphics[scale=.8]{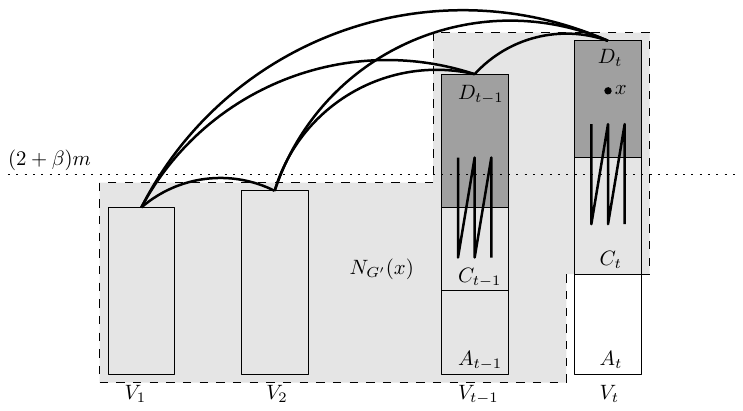}
    \caption{Proving that there is a copy of $W_{2n}$ having its center in $D_t$.  Note that for all $1\leq i<j\leq t$, we have all possible edges between $V_i$ and $V_j$ in the original graph $G^*$.}
    \label{fig:lem3-2}
\end{figure}

\begin{proof}
For all $i, j \in [t]$ (not necessarily distinct), let $X_{i,j} = \{x \in V_i :  d_G(x, V_j) < d_{G^*}(x, V_j)- \eta|V_j|\}$. 
Since 
\begin{align*}
(1-d)e_{G^*}(V_i, V_j)\leq e_{G}(V_i, V_j) &<  \sum_{x\in X_{i,j}}(d_{G^*}(x, V_j)- \eta|V_j|)+\sum_{v\in V_i\setminus X_{i,j}}d_{G^*}(v, V_j)\\
&=e_{G^*}(V_i, V_j)-|X_{i,j}|\eta |V_j|,
\end{align*}
we have $|X_{i,j}|<  \frac{d}{\eta |V_j|}e_{G^*}(V_i, V_j)\leq \frac{d}{\eta}|V_i|$. For all $i \in [t]$, let $X_i = \bigcup_{j=1}^t X_{i,j}$. Let $G' = G[V(G) \setminus \bigcup_{i=1}^t X_i]$ and for all $i \in [t]$, let $V_i' = V_i \setminus X_i$. Note that $|X_i| < \frac{dt}{\eta}|V_i| \leq \eta|V_i|$ (since $t \leq \frac{1}{\beta}$ and $d \ll \eta$), so $|V(G')| \geq N - \eta N \geq N - 3\eta n$. Furthermore, for any $v \in V(G')$, $d_{G'}(v) \geq d_{G^*}(v) - \sum_{j=1}^t 2\eta|V_j| \geq d_{G^*}(v) - 6\eta n$. For every set $S\subseteq V(G)$ that is a union of parts or subsets of parts (such as $A_i, C_i,$ or $D_i$), we write $S'=S\cap V(G')$ and we note that $|S'| \geq |S| - 3\eta n$.

Note that by (ii) we have $|V_t|\geq (2+\beta)m$ and thus by (iii), $|D_t|\geq(\beta-\alpha)m$ which implies that $|D_t'|\geq |D_t|-\eta|V_t|\geq (\beta-2\alpha)m$.  Choose a vertex $x\in D_t'$, set $H = G'[N_{G'}(x)\setminus A_t]$, and set $N_H = |V(H)|$.  We will show that $N_H>2n$ and $H$ satisfies the conditions of Theorem \ref{SHthm} for being pancyclic.  Thus we will get a copy of $C_{2n}$ in $H$ which gives us a copy of $W_{2n}$ with $x$ as the center (see Figure \ref{fig:lem3-2}).  For all $S'\subseteq V(G')$, we write $S''=S'\cap V(H)$. Because $x$ is complete to $G^* \setminus A_t$, the number of non-neighbors of $x$ in $G' \setminus A_t$ is bounded by $6\eta n$. Thus, we have
\begin{equation}\label{eq:S''}
|S''|\geq |S'|- 6\eta n \geq |S|- 9\eta n.
\end{equation}

First note that by (iii) we have
\begin{equation}\label{eq:N_H}
N_H \ge N - 9\eta n - |A_t|\geq ((2+\gamma)n+m) - 9\eta n -(1+\frac{\alpha}{2})m > (2+\frac{\gamma}{2})n.
\end{equation}
For all $v \in V(H)$, let $M(v)$ denote the set of non-neighbors of $v$ in the original graph $G^*$. Thus we have
\begin{equation}\label{eq:dH}
d_H(v) \geq N_H - 6\eta n - |M(v)|.
\end{equation}
Note that if $|M(v)|<(2+\beta)m$, then since $m<\frac{n}{2}$, $$|M(v)|+6\eta n<(2+\beta)m+6\eta n<(1+\frac{\beta}{2})n+6\eta n<(1+\frac{\gamma}{4})n\leq \frac{N_H}{2}$$ and thus by \eqref{eq:dH} we have $d_H(v)>\frac{N_H}{2}$.

\begin{claim}\label{clm:NH/2}
For all $v\in V(H)\setminus \bigcup_{i=t-r+1}^{t-1} A_i''$, $|M(v)|<(2+\beta)m$; consequently, $d_H(v)>\frac{N_H}{2}$.
\end{claim}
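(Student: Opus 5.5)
The plan is to bound $M(v)$ directly from the structure of $G^*$. By (i), every edge between distinct parts is present in $G^*$. So if $v\in V_i$, then all non-neighbours of $v$ in $G^*$ lie in its own part: $M(v)\subseteq V_i$. It therefore suffices to bound, for each possible location of $v$, the number of non-neighbours of $v$ inside $V_i$. Once we have $|M(v)|<(2+\beta)m$, the remark just before the claim gives $d_H(v)>\frac{N_H}{2}$. That remark combines \eqref{eq:dH}, $m<\frac n2$, $\eta\ll\beta\ll\gamma$ and \eqref{eq:N_H}.

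We split into cases according to where $v$ lies, using (ii)--(iv).

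\textbf{Small parts.} Suppose $v\in V_i$ with $i\le t-r$. We make no use of edges inside $V_i$ and simply bound $|M(v)|\le |V_i|\le |V_{t-r}|<(2+\beta)m$ by (ii).

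\textbf{Large parts.} Suppose $t-r+1\le i\le t$. Since $v\in V(H)$ we have $v\notin A_t$, and by hypothesis $v\notin A_i''$ for $t-r+1\le i\le t-1$. Hence $v\in C_i\cup D_i$.
\begin{itemize}
\item If $v\in D_i$, then by (iv) $v$ is adjacent in $G^*$ to all of $(D_i\setminus\{v\})\cup C_i$. So $M(v)\subseteq A_i\cup\{v\}$, and by (iii), $|M(v)|\le |A_i|+1<(1+\frac{\alpha}{2})m+1<(2+\beta)m$.
\item If $v\in C_i$, then by (iv) $v$ is adjacent to all of $D_i$. So $M(v)\subseteq A_i\cup C_i$, and by (iii), $|M(v)|\le |A_i|+|C_i|\le 2|A_i|+|C_i|<(2+\alpha)m<(2+\beta)m$, using $\alpha\ll\beta$.
\end{itemize}
In every case $|M(v)|<(2+\beta)m$, and the consequence $d_H(v)>\frac{N_H}{2}$ follows from the preceding remark.

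There is no real obstacle here. The only points needing care are that vertices of $A_t$ were removed when forming $H$, which is exactly why the exceptional set in the claim runs only up to $t-1$, and that the implicit convention on whether $v$ belongs to $M(v)$ costs at most an additive $1$, absorbed by the strict inequalities since $m$ is large.
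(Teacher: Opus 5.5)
Your proof is correct and follows the paper's argument: (i) gives $M(v)\subseteq V_i$, (ii) handles the small parts, and for $v\in C_i\cup D_i$ condition (iv) gives $M(v)\subseteq A_i\cup C_i$, which (iii) bounds; the degree consequence then comes from the remark before the claim. The only cosmetic difference is that you treat $v\in D_i$ separately to get the sharper bound $|M(v)|\le |A_i|+1$, whereas the paper handles $C_i''\cup D_i''$ together with the single bound $|A_i|+|C_i|$.
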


\begin{proofclaim}
Let $v \in V(H) \setminus \bigcup_{i=t-r+1}^{t-1} A_i''$.  Note that if $v\in V_j$, then $M(v)\subseteq V_j$. So if $v \in V_i''$ for $i \leq t-r$, then $|M(v)| \leq |V_i| \leq |V_{t-r}| < (2+\beta)m$ by (ii). If $v \in C_i'' \cup D_i''$ for $t-r+1 \leq i \leq t$, then because $D_i$ is complete to $C_i \cup D_i$ in $G^*$, we have $M(v)\subseteq A_i \cup C_i$.  So by (iii), $|M(v)| \leq |C_i| + |A_i| \leq (2+\alpha)m < (2+\beta)m$.
\end{proofclaim}

If for all $t-r+1 \leq i \leq t-1$ and $v \in A_i''$, we have $d_H(v) > \frac{N_H}{2}$, then together with Claim \ref{clm:NH/2} this means that all vertices in $H$ have degree strictly greater than $\frac{N_H}{2}$ and thus by Theorem \ref{SHthm} (in fact, Theorem \ref{thm:bon} is sufficient in this case), $H$ is pancyclic. By \eqref{eq:N_H}, we have $N_H>2n$ and thus $H$ contains a copy of $C_{2n}$ which gives us a $W_{2n}$ with $x$ as the center.  

So suppose not; that is, suppose there exists $t-r+1 \leq i \leq t-1$ and a vertex $v \in A_i''$ such that $d_H(v) \leq \frac{N_H}{2}$.  We will show that in this case there is exactly one such index $i$.  We have $M(v)\subseteq V_i$ and thus $|M(v)| \leq |V_i|$. Therefore, by \eqref{eq:dH} we have $\frac{N_H}{2} \geq d_H(v) \geq N_H - |V_i| - 6\eta n$ which implies 
\begin{equation}\label{eq:ViNH1}
|V_i| \geq \frac{N_H}{2} - 6\eta n.
\end{equation}
Now suppose for contradiction that we have $t-r+1\leq i<j\leq t-1$ such that $|V_t|\geq |V_j|\geq |V_i| \geq \frac{N_H}{2} - 6\eta n$. By \eqref{eq:N_H} and the bounds involving $n$, $m$, $N$ in the hypotheses, this implies
\begin{align*}
N \geq |V_i| + |V_j| + |V_t| \stackrel{\mathclap{\eqref{eq:ViNH1}}}{\geq} 3(\frac{N_H}{2} - 6\eta n)&\geq \frac{3}{2}(N - 9\eta n - |A_t|) - 18\eta n \\
&= N + \frac{1}{2}N - \frac{3}{2}|A_t| - \frac{63}{2}\eta n \\
&\geq N + \frac{5+2\gamma}{2}m - \frac{3}{2}(1+\frac{\alpha}{2})m - \frac{63}{2}\eta \frac{\alpha}{\gamma}m \\
&= N + (1 + \gamma - \frac{3\alpha}{4} - \frac{63\eta \alpha}{2\gamma})m > N,
\end{align*}
a contradiction (since $\alpha, \eta\ll \gamma$). Thus, there is exactly one such index $i$.

We now verify that $H$ satisfies the conditions of Theorem \ref{SHthm} for being pancyclic. Let $d_1 \leq d_2 \leq \dots \leq d_{N_H}$ be the degree sequence of $H$. Suppose there exists an integer $k < \frac{N_H}{2}$ such that $d_k \leq k$. We must show that $d_{N_H-k} > N_H - k$. As established, the only vertices in $H$ that can have degree less than $\frac{N_H}{2}$ belong to $A_i''$. Because $d_k \leq k < \frac{N_H}{2}$, all of these vertices corresponding to $d_1, \dots, d_k$ must be in $A_i''$, implying $k \leq |A_i''| \leq |A_i|$. 

Let $u \in A_i''$ be the vertex with degree $d_H(u) = d_k \leq k$. As usual we have $|M(u)| \leq |V_i|$. So by \eqref{eq:dH}, $d_k \geq N_H - |V_i| - 6\eta n$. Because $d_k \leq k < \frac{N_H}{2}$, this gives $\frac{N_H}{2} > N_H - |V_i| - 6\eta n$ which implies 
\begin{equation}\label{eq:ViNH2}
|V_i| > \frac{N_H}{2} - 6\eta n.
\end{equation}
Note that by \eqref{eq:S''} we have 
\begin{align*}
|D_i''| \geq |D_i| - 9\eta n \geq |A_i| + |V_i| - (2+\alpha)m - 9\eta n &\stackrel{\mathclap{\eqref{eq:ViNH2}}}{>} |A_i| + \left(\frac{N_H}{2} - 6\eta n\right) - (2+\alpha)m - 9\eta n\\
&>|A_i| + \left((1+\frac{\gamma}{4})n\right) - (2+\alpha)\frac{n}{2} - 15\eta n\\
&=|A_i|+(\frac{\gamma}{4}-15\eta-\frac{\alpha}{2})n>|A_i|.
\end{align*}
So provided the degrees of the vertices in $D_i''$ are large enough we will be done.

For all $v \in D_i''$ we have $|M(v)| \leq |A_i|$, and thus by \eqref{eq:dH} and the fact that $i<t$ (which implies $|V_i|\leq \frac{N}{2}$) we have
\begin{align*}d_k + d_H(v) &\geq (N_H - |V_i| - 6\eta n) + (N_H - |A_i| - 6\eta n) \\
&= N_H + N_H - |V_i| - |A_i| - 12\eta n \\
&\stackrel{\mathclap{\eqref{eq:N_H}}}{\geq} N_H + N - 9\eta n - |A_t| - \frac{N}{2} - |A_i| - 12\eta n\\
&= N_H + \frac{N}{2} - |A_t| - |A_i| - 21\eta n \\
&\geq N_H + \frac{N}{2} -(2+\alpha)m - 21\eta n \\
&\geq N_H + (\frac{\gamma}{2}-\frac{\alpha}{2}-21\eta)n> N_H,
\end{align*} 
where the last two inequalities hold by the bounds involving $n$, $m$, $N$ and the fact that $\alpha, \eta\ll \gamma$.  Thus $d_k+d_{N_H-k} > N_H$, which implies that $H$ is pancyclic. By \eqref{eq:N_H} we have $N_H > 2n$, and thus $H$ contains a cycle of length $2n$ giving us a copy of $W_{2n}$ in $G - A_t$ with $x$ as the center.
\end{proof}

Now we proceed with showing 
$R(C_{2m}, W_{2n})= 
\begin{cases}
(2q+o(1))m, & \frac{(q+1)n}{q^2}\leq m<\frac{n}{q-1} \\
(2+\frac{2}{q}+o(1))n, & \frac{n}{q}\leq m<\frac{(q+1)n}{q^2}
\end{cases}.$

\begin{proof}[Proof of Theorem \ref{thm:m<n/2} in the case $\frac{(q+1)n}{q^2}\leq m<\frac{n}{q-1}$]
Let $\gamma>0$, let $m_0\gg \frac{1}{\gamma}$ and let $n$, $m$ be integers with $m_0\leq m<\frac{n}{2}$ and $\frac{(q+1)n}{q^2}\leq m<\frac{n}{q-1}$, where $q$ is the unique integer such that $\frac{n}{q}\leq m< \frac{n}{q-1}$. Let $G$ be a 2-colored complete graph on $N=(2q+\gamma)m$ vertices. If there is a vertex $v$ with blue degree at least $m+2n-1$, then by Theorem \ref{thm:evenC} we have a red $C_{2m}$ or blue $C_{2n}$ in $N_B(v)$ and we are done.  Since $n\leq \frac{q^2}{q+1}m$, we have
\begin{equation}\label{eq:dG1}
\delta(G_R)\geq N-1-(m+2n-2)\geq (2q-1+\gamma)m-2n\geq (1-\frac{2}{q+1}+\gamma)m=\frac{\frac{q-1}{q+1}+\gamma}{2q+\gamma}N. 
\end{equation}
Apply Theorem \ref{thm:reglem} to $G_R$ to obtain an $\ep$-regular partition $\{V_0, V_1, \dots, V_k\}$ of $G_R$ with $0<\ep \ll \gamma$. Let $\Gamma$ be the resulting 2-colored reduced graph with parameters $\ep$ and $d$. By \eqref{eq:dG1} and Lemma \ref{lem:reduced} we have 
\begin{equation}\label{eq:dGam1}
\delta(\Gamma_R)\geq (\frac{\frac{q-1}{q+1}+\gamma}{2q+\gamma}-d-\ep)k\geq \frac{\gamma}{3q} k.
\end{equation}
Let $\Gamma_R^1, \dots, \Gamma_R^t$ be the connected components of $\Gamma_R$ and note that by \eqref{eq:dGam1} all of these components have more than $\frac{\gamma}{3q}k$ vertices.  For all $i\in [t]$, let $U_i=\bigcup_{V_j\in V(\Gamma_R^i)}V_j$. Without loss of generality suppose $\frac{\gamma}{4q}N\leq |U_1|\leq \dots \leq |U_t|.$

\textbf{Case 1} ($|U_t|< (2+\frac{\gamma}{q})m$).  In this case we show that the conditions of Lemma \ref{lem:case1} are satisfied in $G_B[U_2\cup \dots \cup U_t]$ which will give us a blue copy of $W_{2n}$ with center in $U_1$.

Since $N=(2q+\gamma)m=q(2+\frac{\gamma}{q})m$, then by the case we must have $t\geq q+1\geq 4$. Consequently, $|U_1|\leq \frac{N}{q+1}$ and 
\begin{equation}\label{eq:2tot}
|U_2|+\dots+|U_t|\geq \frac{qN}{q+1} = (\frac{2q+\gamma}{q+1})qm \geq (2+\frac{\gamma}{q})n,
\end{equation}
where the last inequality holds since $m\geq \frac{(q+1)n}{q^2}$.  Furthermore, by \eqref{eq:2tot} and the fact that $m < \frac{n}{q-1}$, we have
\begin{align*}
|U_2|+\dots+|U_{t-1}| \geq (2+\frac{\gamma}{q})n - |U_t| > (2+\frac{\gamma}{q})n - (2+\frac{\gamma}{q})\frac{n}{q-1}= \Big(1 + \frac{q-3}{q-1} + \frac{\gamma(q-2)}{q(q-1)}\Big)n.
\end{align*}
Thus Lemma \ref{lem:case1} is satisfied with $\beta = \min\{\frac{\gamma}{q}, \frac{q-3}{q-1} +\frac{\gamma(q-2)}{q(q-1)}\}$ (note that $q-3\geq 0$), giving us a blue $W_{2n}$ in $G_R[U_1\cup \dots \cup U_t]$ with center in $U_1$.

\textbf{Case 2} ($|U_{t}|\geq (2+\frac{\gamma}{q})m$). For all $i\in [t]$ with $|U_i|\geq (2+\frac{\gamma}{q})m$, if $\Gamma_R^i$ contains a fractional matching covering at least $\frac{(2+\frac{\gamma}{q})m}{N}k$ vertices of $\Gamma_R^i$, then by Lemma \ref{lem:frac_cycle} we get a red copy of $C_{2m}$. So suppose for all such $i$ there is no such fractional matching, and apply Theorem \ref{thm:frac_GE} to $\Gamma_R^{i}$ to get a partition $\{\mathcal{A}_{i}, \mathcal{C}_{i}, \mathcal{D}_{i}\}$ of $\Gamma_R^{i}$ which gives a resulting partition $\{A_i, C_i, D_i\}$ of $U_i$. By Theorem \ref{thm:frac_GE}, $\mathcal{D}_i$ is an independent set in $\Gamma_R$, and there are no edges between $\mathcal{C}_i$ and $\mathcal{D}_i$ in $\Gamma_R$. Furthermore, there are no edges in $\Gamma_R$ between distinct components $\Gamma_R^i$ and $\Gamma_R^j$. By Lemma \ref{lem:reduced}(i), we have $\delta(\Gamma)\geq (1-\ep)k$ and thus all but an $\ep$-proportion of non-edges in $\Gamma_R$ must be edges in $\Gamma_B$. Since an edge in $\Gamma_B$ implies a blue edge density of at least $1-d$ between the corresponding clusters, the conditions of Lemma \ref{lem:case2} are satisfied in $G_B[U_1\cup \dots \cup U_t]$ with $\beta=\frac{\gamma}{q}$.  Note that since $m \ge \frac{(q+1)n}{q^2}$ and $q \ge 3$, we have $$N-m=(2q-1+\gamma)m\geq (2q-1+\gamma)\frac{(q+1)n}{q^2} =(2+\frac{q-1}{q^2}+\gamma\frac{q+1}{q^2})n.$$ Thus the lower bound on $N$ required for Lemma \ref{lem:case2} is satisfied with $\frac{q-1+\gamma(q+1)}{q^2}$ in place of $\gamma$.  Applying Lemma \ref{lem:case2} to $G_B[U_1\cup \dots\cup U_t]$ gives a blue $W_{2n}$.
\end{proof}

\begin{proof}[Proof of Theorem \ref{thm:m<n/2} in the case $\frac{n}{q}\leq m<\frac{(q+1)n}{q^2}$]
Let $\gamma>0$, let $m_0\gg \frac{1}{\gamma}$ and let $n$, $m$ be integers with $m_0\leq m<\frac{n}{2}$ and $\frac{n}{q}\leq m<\frac{(q+1)n}{q^2}$, where $q$ is the unique integer such that $\frac{n}{q}\leq m< \frac{n}{q-1}$. Let $G$ be a 2-colored complete graph on $N=(2+\frac{2}{q}+\gamma)n$ vertices. Note that since $n \leq qm$, we have 
\begin{equation}\label{eq:nqm}
N \leq (2q+2+q\gamma)m. 
\end{equation}

If there is a vertex $v$ with blue degree at least $m+2n-1$, then by Theorem \ref{thm:evenC} we have a red $C_{2m}$ or blue $C_{2n}$ in $N_B(v)$ and we are done. Since $m< \frac{(q+1)n}{q^2}$ in this case, 
\begin{equation}\label{eq:dG2}
\delta(G_R)\geq N-1-(m+2n-2)\geq (\frac{2}{q}+\gamma)n-m> (\frac{1}{q}-\frac{1}{q^2}+\gamma)n=\frac{\frac{1}{q}-\frac{1}{q^2}+\gamma}{2+\frac{2}{q}+\gamma}N.
\end{equation}
Apply Theorem \ref{thm:reglem} to $G_R$ to obtain an $\ep$-regular partition $\{V_0, V_1, \dots, V_k\}$ of $G_R$ with $0<\ep \ll \gamma$.
Let $\Gamma$ be the resulting 2-colored reduced graph with parameters $\ep$ and $d$. Note that by \eqref{eq:dG2} and Lemma \ref{lem:reduced}, we have 
\begin{equation}\label{eq:dGam2}
\delta(\Gamma_R)\geq (\frac{\frac{1}{q}-\frac{1}{q^2}+\gamma}{2+\frac{2}{q}+\gamma}-d-\ep)k\geq \frac{\gamma}{3} k.
\end{equation}
Let $\Gamma_R^1, \dots, \Gamma_R^t$ be the connected components of $\Gamma_R$ and note that by \eqref{eq:dGam2} all of these components have more than $\frac{\gamma}{3} k$ vertices.  For all $i\in [t]$, let $U_i=\bigcup_{V_j\in V(\Gamma_R^i)}V_j$. Without loss of generality suppose $\frac{\gamma}{4}N\leq |U_1|\leq \dots \leq |U_t|.$

\textbf{Case 1} ($|U_t|<(\frac{2(q+1)}{q^2}+\frac{\gamma}{q})n$).  In this case we show that the conditions of Lemma \ref{lem:case1} are satisfied in $G_B[U_1\cup \dots \cup U_t]$ which gives us a blue copy of $W_{2n}$ with center in $U_1$. 

Since $N=(\frac{2(q+1)}{q}+\gamma)n=(\frac{2(q+1)}{q^2}+\frac{\gamma}{q})qn$, then by the case we must have $t\geq q+1$. Consequently, $|U_1|\leq \frac{N}{q+1}=(\frac{2}{q}+\frac{\gamma}{q+1})n$ and thus 
\begin{equation}\label{eq:2tot'}
|U_2|+\dots+|U_t|\geq N-(\frac{2}{q}+\frac{\gamma}{q+1})n = (2+\frac{q\gamma}{q+1})n.
\end{equation}
Furthermore, by \eqref{eq:2tot'} and the fact that $|U_t|<(\frac{2(q+1)}{q^2}+\frac{\gamma}{q})n$, we have
\begin{align*}
|U_2|+\dots+|U_{t-1}| \geq (2+\frac{q\gamma}{q+1})n - |U_t| &> (2+\frac{q\gamma}{q+1})n - (\frac{2(q+1)}{q^2}+\frac{\gamma}{q})n \\
&= (1 + \frac{q^2-2q-2}{q^2} + \gamma(\frac{q}{q+1} - \frac{1}{q}))n\geq  (1 + \frac{\gamma}{2})n.
\end{align*}
Thus, Lemma \ref{lem:case1} is satisfied with $\beta =\frac{\gamma}{2}$, giving us a blue $W_{2n}$.

\textbf{Case 2} ($|U_{t}|\geq (\frac{2(q+1)}{q^2}+\frac{\gamma}{q})n$). Note that since $n > \frac{q^2}{q+1}m$, we have $|U_{t}|\geq (\frac{2(q+1)}{q^2}+\frac{\gamma}{q})n> (2+\frac{q\gamma}{q+1})m$. Let $r$ be the number of parts which have size greater than $(2+\frac{q\gamma}{q+1})m$. If $r \geq q+1$, then $$(2q+2+q\gamma)m\stackrel{\eqref{eq:nqm}}{\geq} N\geq \sum_{i=t-r+1}^t|U_i|>(q+1)(2+\frac{q\gamma}{q+1})m = (2q+2+q\gamma)m,$$ a contradiction. So suppose $1 \leq r \leq q$.

For all $t-r+1\leq i\leq t$, if $\Gamma_R^i$ contains a fractional matching covering at least $\frac{(2+\frac{\gamma}{q})m}{N}k$ vertices of $\Gamma_R^i$, then by Lemma \ref{lem:frac_cycle} we get a red copy of $C_{2m}$. So suppose for all such $i$ there is no such fractional matching, and apply Theorem \ref{thm:frac_GE} to $\Gamma_R^{i}$ to get a partition $\{\mathcal{A}_{i}, \mathcal{C}_{i}, \mathcal{D}_{i}\}$ of $\Gamma_R^{i}$ which gives a resulting partition $\{A_i, C_i, D_i\}$ of $U_i$. By Theorem \ref{thm:frac_GE}, $\mathcal{D}_i$ is an independent set in $\Gamma_R$, and there are no edges between $\mathcal{C}_i$ and $\mathcal{D}_i$ in $\Gamma_R$. Furthermore, there are no edges in $\Gamma_R$ between distinct components $\Gamma_R^i$ and $\Gamma_R^j$. By Lemma \ref{lem:reduced}(i), we have $\delta(\Gamma)\geq (1-\ep)k$ and thus all but an $\ep$-proportion of non-edges in $\Gamma_R$ must be edges in $\Gamma_B$. Since an edge in $\Gamma_B$ implies a blue edge density of at least $1-d$ between the corresponding clusters, the conditions of Lemma \ref{lem:case2} are satisfied in $G_B[U_1\cup \dots \cup U_t]$ with $\beta=\frac{\gamma}{q}$. Note that since $m < \frac{(q+1)n}{q^2}$ and $q \ge 3$, we have $$N-m=(\frac{2(q+1)}{q}+\gamma)n-m> (\frac{2(q+1)}{q}+\gamma)n - \frac{q+1}{q^2}n = (2 + \frac{q-1}{q^2} + \gamma)n.$$ Thus the lower bound on $N$ required for Lemma \ref{lem:case2} is satisfied with $\frac{q-1}{q^2} + \gamma$ in place of $\gamma$.  Applying Lemma \ref{lem:case2} to $G_B[U_1\cup \dots\cup U_t]$ gives a blue $W_{2n}$.
\end{proof}

\section{Odd wheels}\label{sec:odd}

In this section we prove that $R(W_{2n+1})\leq 10n+O(1)$.  We begin with odd counterparts of two lemmas from Section \ref{sec:2up}.

\begin{lemma}[Voss, Zuluaga \cite{VZ}]\label{lem:VZ}
Let $G$ be a graph on $n\geq 2k$ vertices.  If $G$ is 2-connected, non-bipartite, and $\delta(G)\geq k$, then $G$ contains an odd cycle of length at least $2k-1$.  
\end{lemma}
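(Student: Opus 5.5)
The plan is to derive the odd statement from the even one, Theorem \ref{thm:VZ}, applied to an auxiliary bipartite-type construction. Alternatively, I would argue directly with a longest path, since the statement is classical and a self-contained argument of Dirac type seems more transparent. Let $P=v_0v_1\dots v_\ell$ be a longest path in $G$. All neighbours of $v_0$ and of $v_\ell$ lie on $P$, so $d_P(v_0),d_P(v_\ell)\geq k$.

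First consider the case where $v_0$ has two neighbours $v_i,v_j$ with $i<j$ and $j-i$ even. Then $v_0 P v_i v_0$ and $v_0 P v_j v_0$ are cycles of lengths $i+1$ and $j+1$. Combining the chords from $v_0$ in the standard Pósa/Dirac way, using the largest neighbour index $j_{\max}\geq k$, gives a cycle $v_0Pv_{j_{\max}}v_0$ of length at least $k+1$. This alone is too weak: we need length about $2k$. So the right tool is 2-connectivity, via the Dirac-type argument behind Theorem \ref{thm:dir}.

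That argument produces a longest cycle $C$ of length at least $2k$ together with its chord structure. I would strengthen it along the lines of Voss–Zuluaga's parity refinement. Take a longest path whose endpoints have all their neighbours on it. Use 2-connectivity to find two vertex-disjoint "crossing" chord configurations from $v_0$ and from $v_\ell$ whose spans overlap. This yields a cycle $C$ of length at least $2k$, and moreover a family of cycles through the same long segment whose lengths differ by $\pm1$ unless all chords from $v_0$ go to odd-indexed vertices and all chords from $v_\ell$ go to vertices of the appropriate parity, i.e.\ unless the configuration is "bipartite-like".

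If some chord gives a parity switch, we get two cycles of length at least $2k-1$ with lengths of opposite parity. Hence one of them is odd of length at least $2k-1$; the loss of one comes from dropping a single vertex to flip parity.

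The main obstacle is the bipartite-like case. Here every neighbour of $v_0$ and $v_\ell$ sits at a prescribed parity on $P$. I would use non-bipartiteness to locate an odd cycle $D$ somewhere in $G$. Then I would use 2-connectivity (Menger: two disjoint paths from $D$ to the long cycle $C$) to reroute $C$ through an arc of $D$. Choosing which of the two arcs of $D$ to use flips the parity of the resulting cycle. To guarantee the rerouted cycle still has length at least $2k-1$, I would route through the longer of the two segments of $C$ cut out by the attachment points. I would also exploit that $C$ has many chords (from the degree condition), so that attachment points can be slid to keep at least half of $C$. I expect making this length accounting work with only $\delta\geq k$ to be the delicate step. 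If it resists, I would instead cite Voss and Zuluaga \cite{VZ} directly, as the paper does, since this is exactly their theorem.
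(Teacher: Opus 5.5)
The paper does not prove this lemma. It imports it from Voss and Zuluaga \cite{VZ}, just as it imports Theorem~\ref{thm:VZ} and Lemma~\ref{lem:ghs}. Your closing fallback of citing \cite{VZ} is therefore exactly what the paper does, and it is the appropriate justification. The self-contained argument you sketch before that does not prove the statement, and the gap is not confined to the step you flag.

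\textbf{The parity-switch case.} You assert, but never derive, that a wrong-parity chord yields two cycles of length at least $2k-1$ whose lengths differ by one. If $v_0v_i, v_\ell v_h\in E(G)$ with $h<i$, the cycle $v_0Pv_hv_\ell Pv_iv_0$ has length $\ell+1-(i-h-1)$. So switching chords changes the length by an amount set by the gaps between neighbour indices, which you do not control. For instance, if $P$ closes into a Hamiltonian cycle with $n=2k$, you must show that exactly one vertex can be dropped, and nothing in your sketch does this. The reduction to Theorem~\ref{thm:VZ} that you mention at the start is also never specified; for example, an even cycle in the bipartite double cover only projects to a closed walk in $G$.

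\textbf{The bipartite-like case.} This fails for a more basic reason. Suppose an odd cycle $D$ is joined to the long cycle $C$ at $x,y\in V(C)$. The rerouted cycle keeps one $x$--$y$ arc of $C$, and for that fixed arc exactly one of the two arcs of $D$ gives an odd cycle. Keeping the longer arc therefore only guarantees about $|C|/2\ge k$ vertices, i.e.\ an odd cycle of length roughly $k+1$, not $2k-1$. Even the stated aim of your sliding step (``keep at least half of $C$'') is far too weak: you would need to lose only $O(1)$ vertices of $C$.

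The extremal example shows this is a real obstruction. Take $K_{k,m}$ with $m\ge k$ and add one edge $a_1a_2$ inside the side of size $k$. Every odd cycle uses $a_1a_2$, and a longest cycle $C$ has length $2k$. This $C$ can be chosen with $a_1,a_2$ at distance $k$ or $k\pm1$ along it, so every cycle formed from an $a_1$--$a_2$ arc of $C$ plus the edge $a_1a_2$ has length at most $k+2$. The true answer $2k-1$ needs a different long cycle in which $a_1,a_2$ are close. Producing such a cycle in a general graph, under only $\delta(G)\ge k$, is the substance of the Voss--Zuluaga proof, and your proposal gives no mechanism for it. So the sketch should not be presented as an argument; here the lemma rests on the citation.
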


\begin{lemma}[Gould, Haxell, Scott \cite{GHS}]\label{lem:ghs}
Let $d>0$ and let $C := 75 \cdot 10^4/d^5$.  If $G$ is a 2-connected, non-bipartite graph on $N \ge 45C/d^4$ vertices with minimum degree $\delta(G) \ge dN$, then $G$ contains a cycle of length $2t+1$ for all $C \leq 2t+1 \leq \mathrm{oc}(G) - C$, where $\mathrm{oc}(G)$ is the length of the longest odd cycle in $G$.
\end{lemma}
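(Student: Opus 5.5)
This lemma is the odd-length half of the main theorem of Gould, Haxell and Scott \cite{GHS}, so the plan is to invoke their result directly rather than reprove it. Their theorem says that a graph with $\delta(G)\geq dN$ on $N\geq 45C/d^4$ vertices, where $C=75\cdot 10^4/d^5$, contains a cycle of every even length in $[4,\mathrm{ec}(G)-C]$ and every odd length in $[C,\mathrm{oc}(G)-C]$. The even half is exactly Theorem \ref{thm:GHS}, and the odd half is the present statement. The hypotheses ``2-connected and non-bipartite'' just guarantee that $\mathrm{oc}(G)$ is defined and that the odd-cycle statement is not vacuous.

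For completeness, this is the shape of the argument I expect underlies it. The lower endpoint $C$, rather than $3$, is forced because a dense non-bipartite graph can have large odd girth (e.g.\ a blow-up of a long odd cycle).

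\textbf{Step 1.} Fix an odd cycle $Q$ of length $\mathrm{oc}(G)$.

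\textbf{Step 2.} Use the linear minimum degree to find many vertices outside $Q$, or chords of $Q$, whose neighbourhoods on $Q$ are dense. Pigeonholing over $O(1/d)$ arcs of $Q$ then produces a bounded-size ``ladder'' or ``bypass'' configuration. Rerouting $Q$ through such a configuration changes its length by exactly $-2$ while keeping it odd, and the configuration lies on a segment of $Q$ of length $O(1/d^{O(1)})$.

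\textbf{Step 3.} Iterate the shortening. Because the configurations are local and there are linearly many of them spread around $Q$, they can be applied successively, one after another, down to length about $C$. This yields every odd length in $[C,\mathrm{oc}(G)-C]$; the additive loss $C$ at the top absorbs the cost of setting up the first configuration.

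\textbf{Step 4.} Track the constants to get $C=75\cdot 10^4/d^5$ and the bound $N\geq 45C/d^4$. These come from the pigeonhole over arcs and from requiring enough room on $Q$ for disjoint configurations.

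The main obstacle is Step 3: ensuring that the shortening configurations persist after earlier reroutings, so that lengths decrease by exactly $2$ each time without gaps. For this I would cite the lemma in \cite{GHS} rather than redo it. Here we only need the statement as a black box, and it will be applied to red or blue neighbourhoods in the odd-wheel argument.
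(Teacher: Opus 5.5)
Your proposal matches the paper: the lemma is quoted from Gould, Haxell, and Scott without proof, and since the added hypotheses (2-connected, non-bipartite) only weaken their theorem, citing it suffices (strictly, it is non-bipartiteness alone that makes $\mathrm{oc}(G)$ defined; 2-connectivity plays no role). Your Steps 1--4 are a speculative sketch rather than part of the argument, and nothing depends on them.
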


We also use the following special case for the Ramsey numbers of odd cycles.

\begin{lemma}[Faudree, Schelp \cite{FS}; Rosta \cite{Ros}]\label{lem:cycle_ramsey}For all integers $n\geq 2$, 
$R(C_{2n+1}) = 4n + 1$. 
\end{lemma}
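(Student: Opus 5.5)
The lower bound $R(C_{2n+1})\geq 4n+1$ comes from a construction. Take two disjoint red cliques, each on $2n$ vertices, and color every edge between them blue. Each red component has only $2n<2n+1$ vertices, so there is no red $C_{2n+1}$. The blue graph is bipartite, so there is no blue odd cycle. This gives a coloring of $K_{4n}$ with no monochromatic $C_{2n+1}$.

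For the upper bound, fix a $2$-coloring of $K_{4n+1}$. I would follow the classical Bondy--Erd\H{o}s/Faudree--Schelp strategy: first find a long monochromatic even cycle, then extend it by one vertex.

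\textbf{Step 1 (a red $C_{2n}$ and the leftover set).} Theorem~\ref{thm:evenC} gives $R(C_{2n},C_{2n})=3n-1\leq 4n+1$, so there is a monochromatic $C_{2n}$; say it is red, $Q=x_1x_2\dots x_{2n}x_1$. Let $S=V\setminus V(Q)$, so $|S|=2n+1$.

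\textbf{Step 2 (extending by one vertex).} Suppose some $s\in S$ is red to two consecutive vertices $x_i,x_{i+1}$ of $Q$. Replacing the edge $x_ix_{i+1}$ by the path $x_i s x_{i+1}$ gives a red $C_{2n+1}$. So we may assume no $s\in S$ has this property. Then each $s$ has at most $n$ red neighbours on $Q$, and hence at least $n$ blue neighbours on $Q$.

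\textbf{Step 3 (build a blue $C_{2n+1}$).} The aim is to route a blue cycle alternating between $S$ and $V(Q)$, inside the bipartite blue graph $G_B[S,V(Q)]$. Such an alternating cycle is even, so odd length has to come from one extra edge.
\begin{itemize}
\item If $G_B[S]$ has an edge $ss'$, I would aim for a cycle of the form $s\,y_1\,s_1\,y_2\dots y_n\,s'\,s$ of length $2n+1$. Here the $s_i$ lie in $S$ and the $y_j$ lie in $V(Q)$.
\item If instead $S$ is entirely red, then $|S|=2n+1$ gives a red $K_{2n+1}$, which already contains a red $C_{2n+1}$.
\end{itemize}

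\textbf{Main obstacle.} The hard part is the path-building in the first bullet of Step 3. The available information is that every vertex of $S$ has at least $n$ blue neighbours on $Q$, and that its red neighbours contain no two consecutive vertices of $Q$. From this I need to show that $G_B[S,V(Q)]$ contains a path of the required length with endpoints $s,s'$.
\begin{itemize}
\item I would first try Theorem~\ref{thm:jackson}, applied to $G_B[S,V(Q)]$ with the $Q$-side as $X$.
\item In the tight case, where each $s$ is red to exactly an alternating half of $Q$, I would exploit that structure directly. Either many vertices of $S$ share the same parity class of red neighbours, and then red chords through $S$ close a red $C_{2n+1}$. Or two vertices of $S$ have complementary classes, and then the blue edges supply the required path.
\end{itemize}
I expect this case analysis, especially the extremal near-bipartite configurations, to be the bulk of the work. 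It is exactly where the classical proofs of Faudree--Schelp and Rosta are delicate.
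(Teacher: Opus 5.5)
\textbf{There is a genuine gap: the proposal proves only the lower bound, and the upper bound is left open.} The paper itself gives no proof of this lemma. It quotes the theorem of Faudree--Schelp and Rosta and uses it as a black box in the proof of Theorem~\ref{thm:oddW}.

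Your lower-bound construction is the standard one and is correct. Steps~1--2 are also fine, with one small caveat: Theorem~\ref{thm:evenC} excludes $(m,n)=(2,2)$, so for $n=2$ you need $R(C_4)=6$ separately.

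All the difficulty is in Step~3, which you explicitly leave open, and the tools you name there do not work. Theorem~\ref{thm:jackson} requires $k\le |X|\le 2k-2$. Here $|X|=|V(Q)|=2n$ while the guaranteed blue degree is only $k=n$, and passing to a subset of $X$ only makes the inequality worse. Besides, that theorem gives (even) cycles through a prescribed subset of $Y$, not $s$--$s'$ paths of a prescribed length.

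More seriously, your plan of fixing a blue edge $ss'$ in $S$ and joining $s$ to $s'$ by a blue path alternating between $S$ and $V(Q)$ fails in exactly the extremal configuration you single out.
\begin{itemize}
\item Let $S=S_0\cup S_1$, where every vertex of $S_0$ is red exactly to the even-indexed vertices of $Q$ and every vertex of $S_1$ is red exactly to the odd-indexed ones. Then $G_B[S,V(Q)]$ is the disjoint union of $K_{S_0,\mathrm{odd}}$ and $K_{S_1,\mathrm{even}}$.
\item So for $s\in S_0$ and $s'\in S_1$ (your ``complementary classes'' branch), no such path exists at all.
\item The argument there must run the other way. A red edge $ss'$ inside one class gives a red $C_{2n+1}$, by replacing $x_{2j}x_{2j+1}x_{2j+2}$ on $Q$ with $x_{2j}ss'x_{2j+2}$.
\item Otherwise both classes are blue cliques. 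The larger class has at least $n+1$ vertices, and together with its $n$ common blue neighbours on $Q$ it contains a blue $C_{2n+1}$.
\end{itemize}

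Outside this tight case, the proposal gives no argument. That is the case where red neighbourhoods on $Q$ are independent sets of size less than $n$, possibly close to different parity classes. A complete proof would have to combine red detours such as $x_i\,s\,s'\,x_{i+2}$ with a choice of the blue edge in $S$ that fits the component structure of $G_B[S,V(Q)]$. It would also still have to produce a path of exactly the right length with prescribed ends. That case analysis is the real content of the Faudree--Schelp and Rosta proofs.
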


Now we prove the main result of this section.  

\begin{proof}[Proof of Theorem \ref{thm:oddW}]
Let $d=\frac{1}{5}$ and $C=75\cdot 10^4/d^5$. Choose constants $c$ and $c'$ so that $c$ is even and $c\gg c'\gg C$.

Let $N=10n+c$ and consider a 2-coloring of $K_N$ with colors 1 (red) and 2 (blue). For all $i\in [2]$, let $G_i$ be the subgraph induced by the edges of color $i$.  For all $i\in [2]$, let $M_i=\{v: d_i(v)\geq 5n+\frac{c}{2}\}$.  Note that since $N=10n+c$ and $c$ is even, we have $M_1\cap M_2=\emptyset$.  Suppose for contradiction that there is no monochromatic $W_{2n+1}$.

We first note that for all $i\in [2]$ and all $v\in M_i$, 
\begin{equation}\label{eq:di}
\delta(G_i[N_i(v)])\geq |N_i(v)|-1-4n\geq \frac{|N_i(v)|}{5}+5\geq n+c'+3.
\end{equation}
where the last two inequalities hold by the choice of $c$ and $c'$. To see the first inequality, suppose there exists $u\in N_i(v)$ such that $|N_{3-i}(u)\cap N_i(v)|\geq 4n+1$.  Then by Lemma \ref{lem:cycle_ramsey}, we have a monochromatic $C_{2n+1}$ in $N_{3-i}(u)\cap N_i(v)$, which gives a monochromatic $W_{2n+1}$.  

\begin{claim}\label{clm:components}
For all $i\in [2]$ and all $v\in M_i$, there exists $X\subseteq N_i(v)$ with $|X|\leq 3$ such that the 2-colored complete graph induced by $N_i(v)\setminus X$ consists of three disjoint cliques of color $i$, each with at least $n+c'$ vertices such that all edges between the cliques have color $3-i$.
\end{claim}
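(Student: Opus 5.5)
Fix $i\in[2]$ and $v\in M_i$, write $H:=G_i[N_i(v)]$ and $N_H:=|N_i(v)|\geq 5n+\frac{c}{2}$. The plan is to show that $H$ has no long odd cycles, hence breaks into small pieces, and then to read off the three-clique structure from the minimum degree bound \eqref{eq:di}. Any cycle $C_{2n+1}$ in $H$ would form, together with $v$, a monochromatic $W_{2n+1}$. Since $\delta(H)\geq \frac{N_H}{5}+5$, Lemma \ref{lem:2con} (with $k=5$; note $\frac{N_H}{5}+5$ is exactly the required bound) gives a set $X\subseteq N_i(v)$ with $|X|\leq 3$ such that every component of $H-X$ is 2-connected. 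Each component has minimum degree at least $\delta(H)-3\geq n+c'$, and also at least $(\frac15-o(1))|N_H|$, which is linear in its own order.

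\textbf{Step 1 (components are bipartite or small).} Let $K$ be a component of $H-X$. If $K$ is non-bipartite, then Lemma \ref{lem:VZ} gives an odd cycle of length at least $2\delta(K)-1\geq 2n+2c'-1$, provided $|K|\geq 2\delta(K)$. Lemma \ref{lem:ghs}, applied with $d$ roughly $\frac15$ relative to $|K|$ (using $c'\gg C$ and $|K|\geq n$ large, with small $n$ absorbed into $c$), then yields a $C_{2n+1}$ in $K$, which is a contradiction. If instead $|K|<2\delta(K)$, then $K$ is Hamiltonian-dense, and Theorem \ref{thm:bon} or Theorem \ref{SHthm} gives pancyclicity; since $|K|\geq\delta(K)+1>2n+1$, we again get a $C_{2n+1}$. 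So every component of $H-X$ is bipartite.

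\textbf{Step 2 (the count).} Each bipartite component $K$ has both sides of size at least $\delta(K)\geq n+c'$. The $i$-coloured edges stay inside components and inside sides, so each side $S$ of a component spans only colour $3-i$. Suppose some side $S$ had $|S|\geq 4n+1$. Lemma \ref{lem:cycle_ramsey} would then give a monochromatic $C_{2n+1}$ in $S$, in colour $3-i$ since there are no $i$-edges inside $S$. This forces every side to have size at most $4n$. A counting argument is needed to control the number of components $p$. There are $2p$ sides of size at least $n+c'$ and total size about $N_H\leq N$. A vertex $u$ in a side $S$ has colour-$(3-i)$ neighbours in every other component and in $S$ itself; if these amount to at least $4n+1$ within $N_i(v)$, this contradicts the first inequality of \eqref{eq:di} (blue degree inside $N_i(v)$ at most $4n$). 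Since $u$ has $i$-degree at most $|S'|$, where $S'$ is the opposite side, its $(3-i)$-degree inside $N_i(v)$ is at least $N_H-3-|S'|$. So each side satisfies $|S'|\geq N_H-4n-4\geq n+\frac{c}{2}-4$. Because sides come in pairs, this shows $p=1$: with two components, the four sides would total more than $N_H$ only if … Here I expect to reconcile this with the claimed structure, and I suspect the right reading is that the ``three cliques of colour $i$'' arise in the complementary picture. I would therefore redo the argument in colour $3-i$ inside $N_i(v)$: colour $3-i$ has maximum degree at most $4n$ there, so colour $i$ has minimum degree at least $N_H-1-4n$, and then Steps 1--2 apply to $H$ itself.

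\textbf{Main obstacle.} The key difficulty is ruling out the bipartite case and pinning down exactly three parts. My approach would be this: a single bipartite $H-X$ with sides $S,S'$ each of size at most $4n$ in colour $3-i$ forces $N_H\leq 8n+3$. More importantly, the degree condition gives $|S|,|S'|\geq N_H-4n-4$, while an $i$-edge in $S$ is impossible. I would then combine the fact that the sides are $(3-i)$-cliques with the absence of $(3-i)$-coloured $W_{2n+1}$ centred in $S$ to obtain a contradiction. I expect this to show that the components must be non-bipartite pieces of size below $2n+1$, which is where Step 1's pancyclic branch fails. Each such piece has minimum degree at least $n+c'$ and order at most $2n$, and it is then forced to be nearly complete. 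Counting $N_H\geq 5n$ against pieces of size between $n+c'$ and $2n$, with cross edges of colour $3-i$ and $(3-i)$-degree at most $4n$, gives exactly three pieces. Each piece must then be a colour-$i$ clique, since any $(3-i)$-edge inside a piece would add to a vertex's $(3-i)$-degree beyond $4n$ once the pieces are large.
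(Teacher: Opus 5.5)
\paragraph{Step 1 is wrong, and Steps 2--3 do not recover.}
Your opening matches the paper: Lemma \ref{lem:2con} with $k=5$, then Lemma \ref{lem:VZ} and Lemma \ref{lem:ghs} applied to large non-bipartite components. But Step 1 then goes wrong.

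In the branch $|K|<2\delta(K)$ you assert $|K|\geq\delta(K)+1>2n+1$, but you only know $\delta(K)\geq n+c'$. So $|K|$ can be anywhere from $n+c'+1$ to $2n$, and pancyclicity of $K$ gives no $C_{2n+1}$. The correct output of Step 1 is only that a non-bipartite component has order at most $2n$; the paper uses the weaker form that a component of order at least $2n+c'$ is bipartite. Your conclusion that \emph{every} component is bipartite cannot be right, since the three colour-$i$ cliques in the statement are non-bipartite.

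Step 2 rests on this false premise, and even on its own terms the count does not close. For $p=2$, four sides each of size at least $N_H-4n-4\approx n$ fit easily inside $N_H\approx 5n$, which is where your argument breaks off. Re-running the argument ``in colour $3-i$'' just re-derives \eqref{eq:di} and returns you to the same graph $H$.

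Your last paragraph names the right target but gives no mechanism, and the degree-counting mechanism you hint at fails. With three pieces of order about $\frac{5n}{3}$, a vertex has colour-$(3-i)$ degree about $\frac{10n}{3}$ into the other pieces. With four pieces of order about $\frac{5n}{4}$, that degree is about $\frac{15n}{4}$. Both are below $4n$, so neither a stray $(3-i)$-edge inside a piece nor a fourth piece contradicts \eqref{eq:di}.

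\paragraph{What is missing.}
You need explicit colour-$(3-i)$ wheels. The paper gets them from a case analysis on the number $t$ of components $V_1,\dots,V_t$ of $H-X$; edges between distinct components all have colour $3-i$.
\begin{itemize}
\item If $t\geq 4$: take $x\in V_1$, $y\in V_2$ and $n$ vertices from each of $V_3,V_4$. These span a colour-$(3-i)$ copy of $K_{1,1,n,n}\supseteq W_{2n+1}$.
\item If $t=3$: the same configuration, built on a $(3-i)$-edge $xy$ inside $V_1$, shows no such edge exists. So every piece is a colour-$i$ clique.
\item If $t=1$: the component has order at least $5n+c'$, so it is bipartite. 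One side is then a colour-$(3-i)$ clique on at least $\frac{5n+c'}{2}\geq 2n+2$ vertices.
\item If $t=2$: the larger component has order above $2n$, so by the corrected Step 1 it is bipartite, with sides $V_1',V_1''$ of order at least $n+c'$. Fix $u\in V_1'$. Take a $(3-i)$-path $x_0x_1\dots x_{2n}$ alternating between $V_1'\setminus\{u\}$ and $V_2$, with both ends in $V_1'$. Closing it with the $(3-i)$-edge $x_0x_{2n}$ gives a $C_{2n+1}$ in $N_{3-i}(u)$.
\end{itemize}
Without these constructions or equivalents, nothing in the proposal pins down exactly three colour-$i$ cliques.
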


\begin{proofclaim}
Without loss of generality, assume that $i=1$ (red). 

By \eqref{eq:di} and Lemma \ref{lem:2con} (with $k=5$), there exists $X \subseteq N_1(v)$ with $|X| \leq 3$ such that every component of $H:=G_1[N_1(v)]- X$ is 2-connected.  Again, by \eqref{eq:di}, we have $\delta(H)\geq n+c'$.  
Let $V_1, V_2, \dots, V_t$ be the components of $H$, all of which are 2-connected. 

We first establish the following:
\begin{equation}\label{eq:nonbip}
\text{For all $j\in [t]$, $|V_j| \geq 2n + c' \implies H[V_j]$ is bipartite.}
\end{equation}
To see this, suppose for contradiction that there exists $j\in [t]$ such that $H[V_j]$ is non-bipartite and $|V_j| \geq 2n + c'$. Without loss of generality, suppose $j=1$.  Since $\delta(H)\geq n+\frac{c'}{2}$ and $c'\gg C$, we have an odd cycle of length at least $2n+\frac{c'}{2}-1\geq 2n+C+1$ in $H[V_1]$.  Since $2n+c'\geq 45C/d^4$ by the choice of $c'$ (together with \eqref{eq:di} and the fact that $d=\frac{1}{5}$) we may apply Lemma \ref{lem:ghs}, to get a red odd cycle of length exactly $2n+1$ in $V_1\subseteq N_1(v)$.  So we have a red $W_{2n+1}$, a contradiction.

Now we show that $t=3$.  First suppose for contradiction that $t\geq 4$.  Since $\delta(H) \ge n + c'$, we have $|V_j|\geq n + c' + 1$ for all $j\in [t]$. Let $x \in V_1$.   Let $y\in V_2$, $X\subseteq V_3$ and $Y\subseteq V_4$.  Since all edges between the sets are blue, we have that $G_2[\{x,y\}\cup X\cup Y]\simeq K_{1,1,n,n}$ (the complete 4-partite graph with parts of sizes $1,1,n,n$ respectively) which clearly contains $W_{2n+1}$.

Next, suppose $t = 1$. In this case we have $|V_1| \geq 5n + c' > 2n + c'$, which by \eqref{eq:nonbip} implies that $H[V_1]$ is bipartite. So there exists a partition $\{V_1', V_1''\}$ of $V_1$ so that there are no red edges inside $V_1'$ or inside $V_1''$.  We have that, say $|V_1'|\geq \frac{|V_1|}{2}\geq  \frac{5n}{2} + \frac{c'}{2}\geq 2n+2$, and $V_1'$ induces a blue clique which thus contains a blue $W_{2n+1}$, a contradiction.

Finally suppose $t = 2$. Since $|V_1| + |V_2| = |V(H_i)| \ge 5n + c'$, we have without loss of generality $|V_1| \geq \frac{5n}{2} + \frac{c'}{2}$. Because $\frac{5n}{2} + \frac{c'}{2} > 2n + c'$, \eqref{eq:nonbip} implies that $H[V_1]$ is bipartite.  Let $\{V_1', V_1''\}$ be the partition of $V_1$.  Since $\delta(H) \geq n+c'$, we have $|V_1'|, |V_1''|\geq n+c'$. Note that $V_1'$ and $V_1''$ induce blue cliques and all edges from $V_1$ to $V_2$ are blue.  Let $u\in V_1'$.  Choose a path $x_0x_1\dots x_{2n}$ in $G_B[V_1'\setminus \{u\}, V_2]$ with $x_0, x_{2n}\in V_1'\setminus \{u\}$.  Since every edge in $V_1'$ is blue, we have that $x_0x_{2n}$ is blue and thus we have a blue cycle of length $2n+1$ in the blue neighborhood of $u$; that is, a blue $W_{2n+1}$, a contradiction.  

Having established that $t = 3$.  We now show that for all $i\in [3]$,  $V_i$ must be a red clique. Suppose, for contradiction, that say $V_1$ is not a red clique; then there exists at least one blue edge $xy$ with $x,y\in V_1$. We can select $x, y \in V_1$, $X\subseteq V_2$ and $Y\subseteq V_3$ with $|X|=|Y|=n$.  So $G_2[\{x,y\}\cup X\cup Y]\simeq K_{1,1,n,n}$ which contains $W_{2n+1}$.

Thus, $H$ consists of exactly three disjoint red cliques $V_1, V_2, V_3$.  Since these are the components of $H$, we have all blue edges between $V_1, V_2, V_3$. Because $\delta(H) \ge n+c'$, each of these cliques must contain at least $n+c'+1 > n+c'$ vertices, completing the proof of the claim.
\end{proofclaim}

Without loss of generality, suppose $M_1\neq \emptyset$ and let $v\in M_1$.  Apply Claim \ref{clm:components} to $N_1(v)$ to get $X\subseteq N_1(v)$ with $|X|\leq 3$ such that $G_1[N_1(v)]-X$ consists of three disjoint red cliques with vertex sets $V_1, V_2, V_3$ of size $n+c'$ and all blue edges between them. Let $u\in V_1$.  If $u\in M_2$, then by Claim \ref{clm:components} there exists a set $X'\subseteq N_2(u)$ such that $G_2[N_2(u)]-X'$ consists of three disjoint blue cliques of size at least $n+c'$ with all red edges between them (in particular, the red edges induce a graph of chromatic number 3), but this is impossible as $u$ sends blue edges to all of $V_2\cup V_3$ and each of $V_2$ and $V_3$ induces a red clique of size greater than $n$. Thus we must have that $u\in M_1$.

Now applying Claim \ref{clm:components} to $N_1(u)$ we get $X'\subseteq N_1(u)$ with $|X'|\leq 3$ such that $G_1[N_1(u)]-X'$ consists of three disjoint red cliques with vertex sets $V_4, V_5, V_6$ of size at least $n+c'$ and all blue edges between them.  Since $u\in V_1$ and $V_1$ induces a red clique, we have that one of these red cliques, say $V_4$, satisfies  $V_1\setminus (\{u\}\cup X')\subseteq V_4$. Then since $u$ sends blue edges to all of $V_2\cup V_3$, we must have that $V_5$ and $V_6$ are disjoint from $V_2\cup V_3$.  

Let $w\in V_1\setminus (\{u\}\cup X')$ and note that $w$ sends blue edges to all of $V_2\cup V_3\cup V_5\cup V_6$, and $G_2[V_2, V_3]$, $G_2[V_5, V_6]$ are complete blue bipartite graphs with $|V_2|, |V_3|, |V_5|, |V_6|>n$.  If there are no blue edges between $V_2$ and $V_5$, then we have a red clique on $2n+2c'\geq 2n+2$ vertices and thus we have a red $W_{2n+1}$.  So let $x\in V_2$, $y\in V_5$ such that $xy$ is blue.  Likewise if there are no blue edges between $V_2\setminus \{x\}$ and $V_6$, then we have a red clique on at least $2n+2c'-1\geq 2n+2$ vertices and we are again done.  So let $x'\in V_2\setminus \{x\}$ and $y'\in V_6$ such that $x'y'$ is blue.  Since $G_2[V_2, V_3]$ and $G_2[V_5, V_6]$ are complete bipartite graphs with $|V_2|, |V_3|, |V_5|, |V_6|>n$, we have a path from $y$ to $y'$ in $G_B[V_5, V_6]$ of length $2n-3$ and we have a path of length 2 from $x$ to $x'$ in $G_B[V_2, V_3]$ which together with the edges $xy$ and $x'y'$ gives us a blue cycle of length $2n+1$ in the blue neighborhood of $w$; that is, a blue $W_{2n+1}$, a contradiction.  
\end{proof}

\section{Conclusion}\label{sec:Conc}

\subsection{Further improvements and the off-diagonal case}

In order to discuss the value of $R(W_{2m}, W_{2n})$, we may assume by symmetry that $m\leq n$ and write $m=\mu n$.  Then by Theorems \ref{thm:starWheel}, \ref{thm:starWheelExact} for the lower bound, and Theorems \ref{thm:CycleWheel}, \ref{thm:m>n/2}, and \ref{thm:m<n/2} for the upper bound, we have the following for all $q\geq 2$:
\begin{align*}
 (4+\mu+o(1))n &= 
   R(W_{2\mu n}, K_{1,2n})\\
   &\leq R(W_{2\mu n}, W_{2n})\leq 2\cdot R(C_{2\mu n}, W_{2n})=
\begin{cases} 
     4(1+\mu)n+O(1), & \frac{1}{2}\leq \mu< 1\\
     (4q\mu+o(1))n, & \frac{q+1}{q^2}\leq \mu <\frac{1}{q-1}\\
     (4+\frac{4}{q}+o(1))n, & \frac{1}{q}\leq \mu<\frac{q+1}{q^2}\\
\end{cases}.
\end{align*}

It would be interesting to improve the trivial bound $R(W_{2\mu n}, W_{2n})\leq 2\cdot R(C_{2\mu n}, W_{2n})$ (as was done in the case of odd wheels in \cite{ZC} and Theorem \ref{thm:oddW}). In particular, we conjecture the following.  
\begin{conjecture}\label{con:w6}
$R(W_{2n})\leq (6+o(1))n$.
\end{conjecture}

While the methods used in the proof of Theorem \ref{thm:oddW} won't directly help here, we suspect that the methods of \cite{DW} combined with the methods in Section \ref{sec:m<n/2} can be used to approach this problem.  However, we note that even if Conjecture \ref{con:w6} were true, we would still only know that $5n-\frac{1+(-1)^n}{2}\leq R(W_{2n})\leq (6+o(1))n$, and thus some additional work would be required.  

\subsection{Fans and wheels}

From \cite{DW} we have $(3+\sqrt{3})n-8<R(F_n)\leq (5+o(1))n$ and from Theorem \ref{thm:main} we have $5n-\frac{1+(-1)^n}{2}\leq R(W_{2n})\leq (8+o(1))n$. So in particular, we still don't know if $R(F_n)$ and $R(W_{2n})$ are asymptotically equal or not.

A problem which, as far as we know, hasn't been studied directly at all is the following.
\begin{problem}
(Asymptotically) determine the value of $R(F_{n}, W_{2n})$.  It is currently known that
\begin{equation}\label{eq:fanwheel}
5n-\frac{1+(-1)^n}{2}= R(K_{1,2n}, W_{2n})\leq R(F_{n}, W_{2n})\leq 2\cdot R(nK_2, W_{2n}) = 6n.
\end{equation}
\end{problem}

The lower bound in \eqref{eq:fanwheel} is from Theorem \ref{thm:starWheelExact} and the upper bound in \eqref{eq:fanwheel} is (essentially) due to Lin and Li \cite{LL} who proved that $R(nK_2, F_{n})=3n$, but it turns out that their proof implicitly gives $R(nK_2, F_{n})=R(nK_2, W_{2n})=3n$.  We state their proof below with this fact made explicit.  

\begin{theorem}[Lin, Li]\label{lem:LL}
For all positive integers $n$, $R(nK_2, F_{n})= R(nK_2, W_{2n})= 3n$.
\end{theorem}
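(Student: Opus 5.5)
Since $nK_2\subseteq F_n\subseteq W_{2n}$, it suffices to prove two things: the lower bound $R(nK_2,F_n)\geq 3n$, and the upper bound $R(nK_2,W_{2n})\leq 3n$.

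For the lower bound I would use a construction on $3n-1$ vertices. Take a red clique $A$ on $2n-1$ vertices and a set $B$ of $n$ vertices. Color every edge inside $B$ and every edge between $A$ and $B$ blue. A red matching lives inside $A$, so it has at most $n-1$ edges. The blue graph is $K_n$ joined completely to an independent set of size $2n-1$. Hence every vertex of $A$ has blue neighborhood $B$, which has only $n$ vertices and therefore contains no blue $nK_2$. A vertex $b\in B$ has blue neighborhood $(B\setminus\{b\})\cup A$. Any matching there has at most $n-1$ edges, because $A$ is independent in blue and every blue edge in this set meets $B\setminus\{b\}$. So there is no blue $F_n$.

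For the upper bound, take any 2-coloring of $K_{3n}$ with no red $nK_2$. Apply the Tutte--Berge formula to $G_R$: the maximum red matching has size $\nu\leq n-1$, and there is a set $S$ with $\mathrm{odd}(G_R-S)-|S| = 3n-2\nu\geq n+2$. Let the components of $G_R-S$ be $Q_1,\dots,Q_p$. All edges between distinct $Q_i$ are blue, so $G_B-S$ contains the complete multipartite graph on the parts $Q_1,\dots,Q_p$, where $p\geq |S|+n+2$. Then $|V\setminus S| = 3n-|S|$. The plan is to pick a center $v$ in a smallest part $Q_1$. Its blue neighborhood inside $V\setminus S$ contains the complete multipartite graph on $Q_2,\dots,Q_p$, with $3n-|S|-|Q_1|$ vertices. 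Lemma~\ref{lem:partite_matching} should then produce a blue $C_{2n}$ there.

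The main obstacle is checking the size conditions in Lemma~\ref{lem:partite_matching}. We need at least $2n$ vertices in $Q_2\cup\dots\cup Q_p$, and we need the largest part to satisfy $|Q_p|\leq n$, using case (ii) with $2(\text{rest})\geq 2n$. Counting gives both: the $p\geq |S|+n+2$ odd components use at least $|S|+n+2$ vertices, and the remaining slack is at most $2n-2|S|-2$. When $|S|$ is small, the parts are numerous and mostly singletons. A single large part $Q_p$ is dangerous, and I would handle it as follows. If $|Q_p|$ exceeds the number of vertices in the other parts, use $Q_p$ versus the union of the others as a bipartite blue graph; or choose the center inside the other parts. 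The parity accounting (each odd component has at least $1$ vertex, and $\nu\leq n-1$ forces the deficiency) should then show that the non-largest parts carry at least $n$ vertices besides the center. That gives a blue $C_{2n}$ alternating between $Q_p$ and the others, or a pancyclic multipartite graph.
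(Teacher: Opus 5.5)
Your proof is correct, but it takes a different route from the paper in both directions.

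\textbf{Lower bound.} The paper takes $|X|=n-1$, $|Y|=2n$, with all $X$--$Y$ edges red and both sides blue cliques. The red graph is then $K_{n-1,2n}$, and no blue component has the $2n+1$ vertices that a connected $F_n$ needs. Your coloring, a red $K_{2n-1}$ joined completely in blue to a blue $K_n$, is a different extremal example, and your check of the two possible kinds of center is right.

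\textbf{Upper bound.} The paper stays elementary. It takes a maximum red matching $M$ of size $k\le n-1$ and notes that the unmatched set $Z$ is a blue clique. Since there is no augmenting path, from each edge of $M$ it can choose an endpoint sending at most one red edge into $Z$. Because $k<|Z|=3n-2k$, some $z\in Z$ is blue to all $k$ chosen endpoints. It then applies Bondy's theorem to the blue graph on these endpoints together with $Z\setminus\{z\}$. That graph has order $3n-k-1\ge 2n$ and minimum degree at least $3n-2k-2$, which is at least half its order. You instead take a Tutte--Berge barrier $S$, discard it, and apply Lemma~\ref{lem:partite_matching} to the complete multipartite blue graph between the components of $G_R-S$. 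This uses a heavier structural theorem, but it gives a cleaner picture and reuses a lemma already in the paper.

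\textbf{A statement to correct.} The condition you need is not $|Q_p|\le n$, and that condition can fail. Take $S=\emptyset$ and $G_R=K_{2n-1}$ plus $n+1$ isolated vertices. Then $\nu=n-1$ and $|Q_p|=2n-1$. What your slack count actually gives, and what part (ii) of the lemma needs, is the following:
\begin{itemize}
\item Since $3(|S|+n+2)>3n-|S|$, some odd component is a singleton. So $|Q_1|=1$, and $|Q_2\cup\dots\cup Q_p|=3n-|S|-1\ge 2n$ because $2|S|\le 2n-2$.
\item Removing $Q_1$ and $Q_p$ leaves at least $|S|+n$ odd components, so $|Q_2\cup\dots\cup Q_{p-1}|\ge n$.
\end{itemize}
For $n\ge 2$ there are at least $n+1\ge 3$ parts among $Q_2,\dots,Q_p$. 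Parts (i) and (ii) of Lemma~\ref{lem:partite_matching} then give a blue $C_{2n}$ around any $v\in Q_1$. You should state this count explicitly rather than ``should then show''. There is also no need for your alternative of moving the center.
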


\begin{proof}The result is clear if $n=1$, so suppose for the rest of the proof that $n\geq 2$.  The lower bound follows by taking disjoint sets $X, Y$ with $|X|=n-1$ and $|Y|=2n$, coloring all edges between the sets red and all edges inside the sets blue. There is no red copy of $nK_2$ and there is no blue copy of $F_{n}$ (and thus no blue copy of $W_{2n}$).

Now suppose we have an arbitrary 2-coloring of $K_{3n}$ and say that a largest red matching $M$ has size $k\leq n-1$.  Let $Z$ be the set of vertices not incident with $M$ and note that $Z$ induces a blue clique (otherwise a red edge in $Z$ would contradict the maximality of $M$). If there exists an edge in $M$ such that each endpoint sends at least 2 red edges to $Z$, then this gives an augmenting path.  So we may suppose that every edge in $M$ has at least one vertex which sends at most one red edge to $Z$. Choose one such vertex from each edge and call this set of $k$ vertices $X$. Since $k < 3n-2k = |Z|$, there exists a vertex $z\in Z$ such that $z$ receives no red edges from $X$, meaning $w$ sends a blue edge to every vertex in $X\cup Z'$. 

Let be the blue graph on $X\cup Z'$; that is, $H=G_B[X\cup Z']$.  We now prove that $H$ contains a cycle on exactly $2n$ vertices which, together with $z$, gives the blue copy of $W_{2n}$ (which in turn contains $F_n$).  Note that $|X|+|Z'|=k+3n-2k-1=3n-k-1\geq 2n$ and for all $v\in X\cup Z'$, we have $$d_B(u,X\cup Z')\geq |Z'|-1=3n-2k-2\geq \frac{3n-k-1}{2} = \frac{|X|+|Z'|}{2},$$ where the second inequality holds since $k\leq n-1$.  So $H$ has at least $2n$ vertices and has minimum degree at least $\frac{|V(H)|}{2}$.  Thus, by Theorem \ref{thm:bon}, $H$ is either pancyclic or a complete balanced bipartite graph; in either case, $H$ contains a copy of $C_{2n}$, which completes the proof.
\end{proof}

\noindent
\tbf{Acknowledgements and AI tool disclosure:}
We thank Yanbo Zhang for alerting us to \cite{ZC}.

This paper was entirely human-written.  Before submitting the paper, we used Gemini to essentially act as a referee.  This process highlighted a number of typos and omitted details which we were then able to address before submission.  The humans who wrote this paper take full responsibility for any remaining errors.  We also used Gemini to help produce Figures \ref{fig:cyclewheel} and \ref{fig:cyclewheelstar}.

\bibliographystyle{abbrv}
\bibliography{references}

\end{document}